\documentclass[[a4paper,english,12pt]{amsart}
\usepackage[utf8]{inputenc}
\usepackage{amsfonts}
\usepackage{amsmath,amsthm,amssymb}
\usepackage{bbm}
\usepackage{verbatim}
\usepackage{mathabx}
\usepackage{xcolor}
\usepackage{comment}

\pdfpagewidth\paperwidth
\pdfpageheight\paperheight
\theoremstyle{plain}

\newtheorem{theorem}{Theorem}

\newtheorem{lemma}[theorem]{Lemma}
\newtheorem{corollary}[theorem]{Corollary}
\newtheorem{definition}[theorem]{Definition}

\newtheorem*{assumption-o}{Assumption O}

\theoremstyle{remark}

\newcommand\supp{\operatorname{supp}}

\usepackage{color}
\usepackage[normalem]{ulem}

\definecolor{blue(ryb)}{rgb}{0.01, 0.28, 1.0}
\definecolor{blue-green}{rgb}{0.00, 0.67, 0.9}

\definecolor{darkgreen}{rgb}{0,0.6,0.2}

\newcommand{\intt}{\mbox{\rm int\,}}

\newcommand{\conv}{\mathop{\mathrm{conv}}}

\newcommand{\ff}{\varphi}

\newcommand{\Om}{{\Omega}}

\newcommand{\ve}{\varepsilon}

\newcommand{\RR}{\mathbb R}
\newcommand{\NN}{\mathbb N}
\newcommand{\CC}{\mathbb C}

\DeclareMathOperator*{\esssup}{ess\,sup}

\usepackage{babel}

\begin{document}

%%%%%%%%%%%%%%%%%%%%%%%%%%%%%%%%%%%%%%%%%%%%%%%%%%%%%%%%%
%%%%%%%%%%%%%%%%%%%%%%%%%%%%%%%%%%%%%%%%%%%%%%%%%%%%%%%%

\title{Duality for Delsarte's extremal problem on compact Gelfand pairs}

\author[Berdysheva at al.]{Elena E. Berdysheva, B\'alint Farkas, Marcell Ga\'al, Mita D. Ramabulana, Szil\'ard Gy. R\'ev\'esz}

%\dedicatory{Dedicated to the memory of ...}

\address{Elena E. Berdysheva
\newline  \indent University of Cape Town,
\newline  \indent South Africa}

\email{elena.berdysheva@uct.ac.za}

\address{B\'alint Farkas
\newline  \indent University of Wuppertal
\newline  \indent Gau{\ss}stra{\ss}e 20, 42119 Wuppertal, Germany}

\email{farkas@math.uni-wuppertal.de}

\address{Mita D. Ramabulana
\newline  \indent University of Cape Town,
\newline  \indent South Africa}

\email{mita.ramabulana@uct.ac.za}

\address{Szil\'ard Gy. R\'evesz
\newline  \indent HUN-REN R\'enyi Institute of Mathematics,
\newline \indent Budapest, Re\'altanoda utca 13-15,1053 HUNGARY}

\email{revesz.szilard@renyi.hu}

\keywords{Delsarte's extremal problem, Tur\'an's extremal problem, Gelfand pairs, locally compact Abelian groups, Fourier transform, linear programming, duality.}
\subjclass[2020]{Primary: 43A35. Secondary: 43A25,  43A30, 22F30, 90C05.}

\maketitle

\begin{abstract}
 We study Delsarte-type problems for positive definite functions on compact Gelfand pairs as infinite-dimensional linear programming problems. This setup includes, as a particular case, the case of compact Abelian groups. Depending on the restriction on the signs of the functions, we obtain two important particular cases, the Tur\'an and Delsarte problems. These problems have been studied in relation to number theory, sphere packing, and statistics. In this paper, we describe their duals and prove a strong duality statement. The  method also allows for more general normalization constraints than the prominent one for the Delsarte and the Tur\'an problems, i.e., the one concerning point evaluation at $0$.
\end{abstract}

\section{Introduction} \label{section:introduction}

Extremal problems for positive definite functions play an important role in harmonic analysis, number theory, and geometry.  A common theme in this topic is to study these problems as linear programming problems, describe their duals, and seek a strong duality statement. The approach is often called ``Delsarte's method'', following the work in coding theory of Delsarte in~\cite{delsarte1}, and the work of Delsarte, Goethals, and Seidel in~\cite{delsarte2}.

In Ruzsa~\cite{Ruzsa1981, Ruzsa1982}, arithmetic properties of sequences like uniform distribution mod~1, intersectivity, van der Corput sequences and alike were analyzed by means of Delsarte type  extremal problems and their duals on $\mathbb{Z}$. In R\'ev\'esz~\cite{revesz, Revesz-Beurling, R-Landau} and Virosztek~\cite{virosztek}, duality theory of various Delsarte-type problems on $\mathbb{Z}^d$ was studied with the aim of a direct application to the Beurling theory of primes. The admissible functions of these extremal problems are often taken with prescriptions on where they can be non-zero, non-negative, or non-positive. It seems that these papers were the first to deal with two sign conditions assumed simultaneously, i.e., under the assumption that $a(m)\le 0$ for $m \in  M \subset \mathbb{Z}^d$, whereas $a(k)\ge 0$ for $k \in L \subset \mathbb{Z}^d$, say.  In this direction, the dual of a certain extremal problem of Landau was found in R\'ev\'esz~\cite{R-Landau}. Variants of Landau's extremal problem are still used in establishing zero-free regions for arithmetical zeta functions, see, e.g., \cite{frederik}. Duality in the case of finite Abelian groups was studied by Matolcsi, Ruzsa~\cite{MatolcsiRuzsa}. 

A notable example is the Delsarte problem in $\mathbb{R}^d$, which provides bounds for sphere packing densities. In this setting, the problem is as follows. Given a centrally symmetric convex body $\Omega \subset \mathbb{R}^d$, find 
\begin{equation} \label{eq:first_delsarte}
    \sup\int_{\mathbb{R}^d} \ff(x) \,\mbox{d}x,
\end{equation}
where $\ff$ is a continuous positive definite function satisfying $\ff(0)=1$ as well as ${\ff|_{\mathbb{R}^d\setminus\Omega} \le 0}$. Delsarte-type problems in the context of sphere packing were studied in Kabatyanskii, Levenshtein~\cite{KabLev}, Levenshtein~\cite{Levenshtein}; and precisely problem~\eqref{eq:first_delsarte} with slight variations in the function class was investigated for $\Omega$ being a ball in~$\mathbb{R}^d$ in Yudin~\cite{Yudin}, Gorbachev~\cite{GorbachevTula, gorbachev}, Cohn, Elkies~\cite{cohn-elkies}, Cohn~\cite{Cohn},   Viazov\-ska~\cite{viazovska}, Cohn, Kumar, Miller, Radchenko, Viazovska~\cite{CKMRV}, Cohn, de Laat, Salmon~\cite{Cohn-Laat-Salmon}. Strong duality theory for this Delsarte problem in $\mathbb{R}^d$ was solved in Cohn, de Laat, Salmon~\cite{Cohn-Laat-Salmon}, Kolountzakis, Lev, Matolcsi~\cite{KLM}.

Another notable problem is the so-called Tur\'an extremal problem which is concerned with finding~\eqref{eq:first_delsarte},
where $\ff$ is a continuous positive definite function supported in a centrally symmetric convex body $\Omega$, and satisfying $\ff(0)=1$. The Tur\'an problem for~$\Omega$ being a ball in~$\mathbb{R}^d$ was introduced in Siegel~\cite{siegel} in relation to Minkowski's Lattice Point Theorem. It has been investigated for both the ball and convex tiles in~$\mathbb{R}^d$ in Arestov, Berdysheva~\cite{areberd,areberd0}, Gorbachev~\cite{gorbachev}, Kolountzakis, R\'ev\'esz~\cite{kolrev}.
The above extremal problems make sense in LCA groups, see  R\'ev\'esz~\cite{reveszLCA}, Berdysheva, R\'ev\'esz~\cite{Berdysheva}, Ga\'al, Nagy-Csiha~\cite{marcell-zsuzsa}, Ramabulana~\cite{ramabulanaexistence}, Berdysheva, Ramabulana, R\'ev\'esz~\cite{BRR_Expo}. In \cite{BRR_Expo} the question of the existence of an extremal function as well the influence of certain topological conditions on the sign or support restriction sets has been analysed. The question of the existence of an extremal function has also been studied in Cohn, de Laat, Salmon~\cite{Cohn-Laat-Salmon}, Kolountzakis, Lev, Matolcsi~\cite{KLM}, Ramabulana~\cite{ramabulanaexistence}. Duality for the Tur\'an problem with $\Omega$ being a ball in $\mathbb{R}^d$ was studied by Gabardo \cite{gabardo}.

In this paper, we consider the problems for compact Gelfand pairs. Part of our motivation comes from the spherical Tur\'an problem for positive definite kernels introduced in Gneiting~\cite{gneiting} in connection with some problems in geophysical, meteorological, and climatological modelling. In particular, for $0 < c \le \pi$, the spherical Tur\'an problem asks for 
\begin{equation}\label{spherturan}
    \mathcal{T}_{\mathbb{S}^d}(c):=\sup_{\psi}\int_{\mathbb{S}^d}\psi(\theta(x,y))\textup{d}y,
\end{equation}
where $x \in \mathbb{S}^d$ is an arbitrary basepoint, $\psi: [0, \pi] \to \mathbb{R}$ is a continuous function with $\psi(0)=1$, $\psi(t)=0$ for $t \ge c$, and such that the isotropic (i.e., its values at $(x,y) \in \mathbb{S}^d$ depend only on the distance between $x$ and $y$) function $\psi \circ \theta:\mathbb{S}^d \times \mathbb{S}^d \to \mathbb{R}$ is positive definite. The integral in \eqref{spherturan} is with respect to the surface measure on the sphere $\mathbb{S}^d$.

In applications, for $d=2$, the sphere \(\mathbb{S}^2\) models the surface of the Earth. The positive definite kernels occur as covariances of real-valued random variables on the sphere. In particular, if \( \Phi(x, y) = \text{Cov}(Z(x), Z(y)) \) is the covariance of a real-valued random field \( Z(x) \) on \( \mathbb{S}^d \), then the quantity
\[
\sup_\Phi \int_{\mathbb{S}^d} \Phi(x, y) \, \mbox{d}y
\]
is the maximum total covariance that can be concentrated locally around \( x \), without violating the constraint of positive definiteness, and while ensuring zero correlation beyond radius \( c \). For more details and applications see, e.g., Gaspari and Cohn~\cite{Gaspari-Cohn}, and Hamill, Whitaker and Snyder~\cite{Hamill}\footnote{We thank Tilmann Gneiting for references and discussions on statistical applications.}.

In the paper~\cite{ramabulana} by Ramabulana, problem~\eqref{spherturan} was generalised to homogeneous spaces and shown to be equivalent to a Tur\'an problem on the full symmetry group of the homogeneous space. Note that for the sphere \(\mathbb{S}^d\), this amounts to considering the Tur\'an problem on the compact Gelfand pair \((SO(d+~1),SO(d))\). Any compact Abelian group \(G\) forms a compact Gelfand pair \((G, \{0\})\), so that this is also a generalisation of the classical Tur\'an problem considered on the compact Abelian group \(\mathbb{T}^{d}\). So, the setup in this paper aims to cover these cases. We should also mention that the Delsarte problem is of interest in Gelfand pairs other than \((SO(d+1), SO(d))\), see Cohn,  Zhao \cite{cohn-zhao}, Wackenhuth~\cite{wackenhuth} and the references within. Note that the Gelfand pair case is related to spherical codes, kissing numbers, and other packing problems on other homogeneous spaces like hyperbolic space, Cohn,  Zhao~\cite{cohn-zhao},  Wackenhuth~\cite{maximilian}, Kuklin~\cite{kuklin}, Arestov, Babenko~\cite{AB}. In this direction, the duality theory has been considered in relation to the kissing number problem in Arestov, Babenko~\cite{AB}. In Gorbachev~\cite{gorbachevduality}, the duality was considered for Delsarte problem on compact homogeneous Riemannian spaces of rank~1 in connection with weighted designs; note that in \cite{gorbachevduality} the sign conditions are on the Fourier coefficients, while in the present manuscript they are on the functions.  The Delsarte scheme in non-commutative compact groups was also used in relation to the problem of mutually unbiased bases by Kolountzakis, Matolcsi, and Weiner in~\cite{kolmatwei}, and by Matolcsi and Weiner in~\cite{MW}. 

Motivated by R\'ev\'esz~\cite{revesz}, we study these problems as infinite-dimensional linear programming problems. We follow the approach of Arestov and Babenko in~\cite{AB}. Arestov and Babenko studied the Delsarte problem in a particular setting (for series in terms of ultraspherical polynomials on an interval). They formulated the dual problem and established the strong duality; this helped them to find the solution of their version of the Delsarte problem. In their study of duality, they used the theory from Gol'shtein's book~\cite{golstein}. We present the relevant theory of duality in Section~\ref{section:duality_Duffin}.

Our main result is that a strong duality relation holds for a general class of Delsarte-type problems on compact Gelfand pairs; see Theorems~\ref{theorem:main_duality_G} and~\ref{theorem:main_duality}. We note that the method of Arestov and Babenko in~\cite{AB} could only handle a one-sided sign restriction while our development of it can handle two-sided sign restrictions.  Also, we can consider more general normalization constraints than the point evaluation at~$0$. On the other hand, these methods can only handle the compact case. Another method, less general, e.g., regarding sign restrictions, but tackling the non-compact case at least in locally compact Abelian groups, is presented in our companion paper \cite{C-paper}. Let us formulate our main result here for compact Abelian groups.

 Let $G$ be a compact Abelian group with identity $0$, normalised Haar measure $\lambda_G$, and dual group $\widehat{G}$. Let $C(G)$ denote the collection of continuous real-valued functions, and $\widehat{\varphi}$ denote the Fourier transform of $\varphi \in C(G)$. By $M(G)$ we denote the collection of real-valued regular signed Borel measures on $G$. Denote by $\delta_0 \in M(G)$ the Dirac measure at the identity $0 \in G$.

 A measure $\mu \in M(G)$ is called positive definite (or of positive type) if for all continuous ``test functions'' $u\in C(G)$ the integral of $\mu$ against the convolution square $u \star \widetilde{u}$ is non-negative: $\int_G u \star~ \widetilde{u} \,\mbox{d} \mu \ge~ 0$, where $\widetilde{u}(g) := \overline{u(-g)}$.
 
Let $\Omega_+$, $\Omega_-$ be symmetric Borel subsets of $G$ with $0 \in \intt{\Omega_+}$ and let $\Omega_{\pm}^c$ denote their complements in $G$. Consider the function class
$$
\mathcal{F}_G(\Omega_+, \Omega_-) := \{ \varphi \in C(G) : \widehat{\varphi} \ge 0, \ \varphi(0) = 1, \  \varphi|_{\Omega_+^c} \le 0, \  \varphi|_{\Omega_-^c} \ge 0 \}.
$$

We will impose the following topological condition. 

\begin{assumption-o}

We say that a set $S \subset G$ satisfies \emph{Assumption O}, if for any $g \in S$, any open neighbourhood $V$ of $g$ has $ \lambda_G(V \cap S) > 0$.

\end{assumption-o}

We will put a condition that the sets $\Omega_+^c$ and $\Omega_-^c$ satisfy Assumption O. This is obviously the case when the sets $\Omega_+$, $\Omega_-$ are closed (and the sets $\Omega_+^c$, $\Omega_-^c$ are open). The condition is also satisfied if $\Omega_+$, $\Omega_-$ (and therefore also $\Omega_+^c$, $\Omega_-^c$) are boundary-coherent in the terminology of Berdysheva, Ramabulana, R\'ev\'esz~\cite{BRR_Expo}. Also the condition that $\Omega_+$, $\Omega_-$ have continuous boundary, which is considered in Kolountzakis, Lev, Matolcsi~\cite{KLM} in the case of $G = \RR^d$, is stronger than the boundary-coherence and guarantees Assumption~O.

Let us write for any subset $\Theta \subset G$ 
$$
M^*(\Theta):= \{ \mu \in M(G): \mu\text{ is a non-negative measure and
} \supp(\mu)\subseteq\overline{G\setminus\Theta} \}.
$$
We denote by $T_0(G)$ the subset of $M(G)$ consisting of all $\tau \in M(G)$ such that $\tau$ is positive definite and $\tau(G) = 0$.

\begin{theorem} \label{theorem:main_group}
Let $G$ be a compact Abelian group, and let $\Omega_+$, $\Omega_-$ be symmetric Borel subsets of $G$ with $0 \in \intt{\Omega_+}$, and $\Omega_+^c$, $\Omega_-^c$ satisfying Assumption~O. The linear programming problem 
\begin{equation*}
\mathcal{C}_G(\Omega_+, \Omega_-) := \sup \left\{ \int_G \varphi \,\textup{d}\lambda_G : \varphi \in \mathcal{F}_G(\Omega_+, \Omega_-) \right\}
\end{equation*}
has dual problem
\begin{align*}
\mathcal{C}_{G}(\Omega_+, \Omega_-)^{*}:=\inf \{ \alpha \in \RR : \alpha \delta_0 - \lambda_G \in  M^*(\Omega_-) - M^*(\Omega_+) + T_0(G)\},
\end{align*}
and we have the strong duality relation
\begin{equation*}
\mathcal{C}_G(\Omega_+, \Omega_-) = \mathcal{C}_G(\Omega_+, \Omega_-)^*.
\end{equation*}

\end{theorem}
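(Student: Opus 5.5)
The plan is to set up the primal and the dual as a conic linear program in the dual pair $(C(G), M(G))$, check weak duality directly, and then get strong duality from a separation (Duffin/Gol'shtein-type) argument as in Section~\ref{section:duality_Duffin}.

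\textbf{Weak duality.} Let $\varphi\in\mathcal{F}_G(\Omega_+,\Omega_-)$ and let $\alpha$ be dual feasible, so that
\[
\alpha\delta_0-\lambda_G=\mu_--\mu_++\tau, \qquad \mu_\pm\in M^*(\Omega_\pm),\ \tau\in T_0(G).
\]
Integrate $\varphi$ against both sides. The left side gives $\alpha-\int_G\varphi\,\textup{d}\lambda_G$. On the right:
\begin{itemize}
\item By continuity, $\varphi\ge 0$ on $\overline{\Omega_-^c}$, so $\int\varphi\,\textup{d}\mu_-\ge0$.
\item Likewise $\varphi\le 0$ on $\overline{\Omega_+^c}$, so $-\int\varphi\,\textup{d}\mu_+\ge0$.
\item Write $\varphi=\sum_\gamma\widehat\varphi(\gamma)\gamma$ with $\widehat\varphi\ge0$, so $\int\varphi\,\textup{d}\tau=\sum_\gamma\widehat\varphi(\gamma)\widehat\tau(\gamma)$. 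Positive definiteness of $\tau$ gives $\widehat\tau\ge0$.
\end{itemize}
The Fourier series need not converge absolutely, but a Fejér/approximate-identity argument justifies $\int\varphi\,\textup{d}\tau\ge0$: convolve $\tau$ with $u\star\widetilde u$ for $u$ an approximate unit. Hence $\int\varphi\,\textup{d}\lambda_G\le\alpha$, i.e.\ $\mathcal{C}_G\le\mathcal{C}_G^*$.

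\textbf{Conic formulation.} Define the cone
\[
K=\{\varphi\in C(G):\widehat\varphi\ge0,\ \varphi|_{\Omega_+^c}\le0,\ \varphi|_{\Omega_-^c}\ge0\}\subset C(G).
\]
The primal is $\sup\{\langle\lambda_G,\varphi\rangle:\varphi\in K,\ \langle\delta_0,\varphi\rangle=1\}$. Assumption~O is exactly what identifies the dual cone. A function $\varphi$ satisfies $\varphi\le 0$ on $\Omega_+^c$ iff it does so on $\overline{\Omega_+^c}$, since $\Omega_+^c$ is dense in its closure. Moreover, this holds iff $\int\varphi\,\textup{d}\mu\le0$ for all nonnegative $\mu$ supported on $\overline{\Omega_+^c}$. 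So the three constraint sets have dual cones $M^*(\Omega_+)$, $M^*(\Omega_-)$ and the positive definite measures, and
\[
K^*=\overline{M^*(\Omega_-)-M^*(\Omega_+)+P(G)}^{\,w^*},
\]
where $P(G)$ denotes the positive definite measures. I will use the form of Assumption~O that also lets integrals against $\lambda_G$ restricted to $\Omega_\pm^c$ see the constraint.

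The dual is then $\inf\{\alpha:\alpha\delta_0-\lambda_G\in K^*\}$. The normalization $\tau(G)=0$ arises by absorbing the multiple of $\lambda_G$ into $\tau$: since $\widehat{\lambda_G}=\mathbbm 1_{\{0\}}$, any positive definite $\tau$ can be rewritten with $\tau(G)=0$ after adjusting $\alpha$ and $\mu_\pm$. Equivalently, $\widehat\tau(0)$ is forced by testing the identity with the constant function~$1$. Some care is needed here, and I would instead keep $P(G)$ in the theorem of Section~\ref{section:duality_Duffin} and convert at the end.

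\textbf{Strong duality (main step).} I would invoke the general strong duality theorem of Section~\ref{section:duality_Duffin}. Its hypothesis should be a Slater/interior-type condition, or a closedness condition on the image cone
\[
\{(\langle\delta_0,\varphi\rangle,\langle\lambda_G,\varphi\rangle):\varphi\in K\}\subset\RR^2
\]
together with finiteness of the value. Two facts feed into it:
\begin{itemize}
\item \emph{Finiteness:} $|\varphi|\le\varphi(0)=1$, so the value is at most $1$.
\item \emph{Feasibility with slack:} since $0\in\intt\Omega_+$, take $\varphi=u\star\widetilde u$ with $u$ supported in a small symmetric neighbourhood $V$ of $0$ satisfying $V+V\subset\Omega_+\cap\Omega_-$. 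If $0\notin\intt\Omega_-$, a small adjustment is needed; at worst $\varphi(0)$ itself provides the slack. This gives feasibility and an interior point of the value map.
\end{itemize}

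The main obstacle is the weak$^*$ closedness of $M^*(\Omega_-)-M^*(\Omega_+)+T_0(G)$, which is needed so that no closure appears in the dual. I would handle it with compactness. Along a net in this set with bounded $\alpha$, bound the total masses of $\mu_\pm$ by testing against the feasible $\varphi$ above and against constants. Bound $\tau$ via $\|\tau\|\le$ the masses of the other terms, since it is their difference. Then apply Banach--Alaoglu, using that the supports stay in the closed sets $\overline{\Omega_\pm^c}$ and that positive definiteness and $\tau(G)=0$ are weak$^*$ closed.

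The delicate part is the mass bound for $\mu_+$ and $\mu_-$ when the two-sided sign restrictions interact. There I would test against $u\star\widetilde u$ localized near $0$ together with the constant function.
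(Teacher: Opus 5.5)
\textbf{The gap is in the strong duality step.} Your weak duality argument is fine. No Fej\'er argument is needed, since $\sum_\gamma\widehat\varphi(\gamma)=\varphi(0)<\infty$ for continuous positive definite $\varphi$.

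The Slater/slack discussion adds nothing. With the single normalisation $\varphi(0)=1$ on a cone whose nonzero elements all have $\varphi(0)>0$, the equality $\mathcal{C}_G(\Omega_+,\Omega_-)=\inf\{\alpha:\alpha\delta_0-\lambda_G\in K^*\}$ is a tautology. Also, Section~\ref{section:duality_Duffin} has no Slater hypothesis: Duffin's theorem gives $u_a=v$, and strong duality needs well-posedness $u_a=u$. So everything rests on removing the weak$^*$ closure, and your compactness sketch for that fails in the two-sided case.

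Individual mass bounds for $\mu_\pm$ are impossible as soon as $\overline{\Omega_+^c}\cap\overline{\Omega_-^c}\neq\emptyset$. In the Tur\'an case $\Omega_+=\Omega_-$ you may add any positive measure on $\overline{\Omega_+^c}$ to both $\mu_+$ and $\mu_-$. Even the combined measure $\mu_--\mu_+$, which lies in the weak$^*$ closed cone $Q^*$ of Theorem~\ref{theorem:Q-polar}, is not controlled by your two test functions. The constant $1$ only yields $\mu_-(G)-\mu_+(G)=\nu(G)$, because $\tau(G)=0$. The function $u\star\widetilde u$ supported in the interior of $\Omega_+\cap\Omega_-$ vanishes on the supports of $\mu_\pm$, so it sees only $\tau$. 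The bound on $\|\tau\|$ via ``the other terms'' is then circular.

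In the Delsarte case $\Omega_-=G$ your argument does work, since $\mu_+\ge 0$ gives $\mu_+(G)=-\nu(G)$. This matches the paper, where the one-sided case needs neither Assumption~O nor compactness. In the two-sided case it is not clear that $M^*(\Omega_-)-M^*(\Omega_+)+T_0(G)$ is weak$^*$ closed at all. The theorem does not claim it: the dual is an infimum that need not be attained.

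A symptom of the gap is that Assumption~O does no work in your argument. You use it only for the fact that a continuous function that is $\le 0$ on $\Omega_+^c$ is $\le 0$ on its closure, which holds for every set. The dual cones $M^*$, $L^*$ and $Q^*=M^*+L^*$ are identified in Lemma~\ref{lemma:K-polar} and Theorem~\ref{theorem:Q-polar} without it.

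The paper proceeds differently:
\begin{itemize}
\item Positive definiteness is placed in the variable cone $\ell^1_+(\Gamma)$, and only the sign conditions in $Q\subset C(G)$.
\item The dual constraints $\widehat\mu(\gamma)\le\widehat\sigma(\gamma)$ for $\gamma\ne\mathbbm{1}_G$ then produce $T_0(G)$ directly, with no closure (Lemma~\ref{lemma:dual_formulation}).
\item Duffin's theorem gives $u_a=v$.
\end{itemize}

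The missing ingredient is well-posedness $u_a=u$, obtained by compactness on the \emph{primal} side (Lemma~\ref{lemma:C_epsilon}):
\begin{itemize}
\item Near-feasible $\psi_n$ with $\psi_n\le 1/n$ on $\Omega_+^c$ and $\psi_n\ge-1/n$ on $\Omega_-^c$ are bounded in $L^2(G)$.
\item Mazur's lemma gives convex combinations converging strongly and a.e.\ to a continuous positive definite $\psi$. This $\psi$ satisfies the sign conditions $\lambda_G$-a.e.
\item Assumption~O together with continuity of $\psi$ upgrades the sign conditions to hold everywhere.
\item One then checks that the normalisation and the integral pass to the limit.
\end{itemize}
That is where Assumption~O and compactness have to enter.
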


The theorem takes a simpler form for the classical Delsarte problem of type~\eqref{eq:first_delsarte}.

\begin{theorem} \label{theorem:main_group_G}
Let $G$ be a compact Abelian group, and let $\Omega_+$ be a symmetric Borel subset of $G$ with $0 \in \intt{\Omega_+}$. The Delsarte problem 
\begin{equation*}
\mathcal{D}_G(\Omega_+) := \mathcal{C}_G(\Omega_+, G) = \sup \left\{ \int_G \varphi \,\textup{d}\lambda_G : \varphi \in \mathcal{F}_G(\Omega_+, G) \right\}
\end{equation*}
has dual problem
\begin{align*}
\mathcal{D}_G(\Omega_+)^* := \mathcal{C}_{G}(\Omega_+, G)^{*} = \inf \{ \alpha \in \RR : \alpha \delta_0 - \lambda_G \in - M^*(\Omega_+) + T_0(G)\},
\end{align*}
and we have the strong duality relation
\begin{equation*}
    \mathcal{D}_G(\Omega_+) = \mathcal{D}_G(\Omega_+)^*.
\end{equation*}

\end{theorem}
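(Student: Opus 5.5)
Theorem~\ref{theorem:main_group_G} is the special case $\Omega_-=G$ of Theorem~\ref{theorem:main_group}, so the plan is to check that the hypotheses of Theorem~\ref{theorem:main_group} hold and that both sides reduce to the stated expressions.

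\emph{Step 1: the hypotheses.} Take $\Omega_-:=G$. This set is symmetric and Borel. Its complement $\Omega_-^c=\emptyset$ satisfies Assumption~O vacuously, since there is no point $g\in\emptyset$ to test. The assumption $0\in\intt\Omega_+$ is given.

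There is one gap. Theorem~\ref{theorem:main_group} also requires $\Omega_+^c$ to satisfy Assumption~O, and Theorem~\ref{theorem:main_group_G} as stated does not assume this. I would close the gap by regularizing $\Omega_+$. Let $N$ be the set of points $g\in\Omega_+^c$ that have an open neighbourhood $V$ with $\lambda_G(V\cap\Omega_+^c)=0$, and set $\Omega_+':=\Omega_+\cup N$.
\begin{itemize}
\item On the primal side, if $\varphi\in\mathcal F_G(\Omega_+,G)$ then $\varphi\le 0$ on $\Omega_+^c$. By continuity $\varphi\le0$ on the closure of the essential part of $\Omega_+^c$, which is exactly where the constraint can bite for continuous functions. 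Replacing $\Omega_+$ by $\Omega_+'$ changes the admissible class only by constraints on a set that is locally null inside $\Omega_+^c$. Here I must argue carefully that a continuous $\varphi$ which is $\le0$ a.e.\ on $\Omega_+^c$ is $\le0$ on $\overline{(\Omega_+')^c}$.
\item On the dual side, the condition $\supp\mu\subseteq\overline{\Omega_+^c}$ should also be unaffected, or at least yield the same infimum.
\end{itemize}
If this reduction proves delicate, the fallback is to rerun the proof of Theorem~\ref{theorem:main_group}. There, Assumption~O is needed only for the constraint coming from $\Omega_-$ (or $\Omega_+$) to be expressible via measures, and for $\Omega_+$ alone the weak$^*$-closedness argument may go through without it.

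\emph{Step 2: specializing the two problems.} With $\Omega_-=G$ the constraint $\varphi|_{\Omega_-^c}\ge0$ is empty, so the primal value is $\mathcal C_G(\Omega_+,G)=\mathcal D_G(\Omega_+)$ by definition. On the dual side,
$$M^*(G)=\{\mu\ge0:\ \supp\mu\subseteq\overline{\emptyset}=\emptyset\}=\{0\},$$
so the cone $M^*(\Omega_-)-M^*(\Omega_+)+T_0(G)$ becomes $-M^*(\Omega_+)+T_0(G)$. This is exactly $\mathcal D_G(\Omega_+)^*$.

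\emph{Step 3: conclusion.} Equality $\mathcal D_G(\Omega_+)=\mathcal D_G(\Omega_+)^*$ then follows directly from Theorem~\ref{theorem:main_group}. The only real obstacle is the Assumption~O issue for $\Omega_+^c$ in Step~1. Everything else is bookkeeping.
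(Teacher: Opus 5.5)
Your Steps 2 and 3 are correct, and the reduction would be valid if $\Omega_+^c$ were assumed to satisfy Assumption~O. The theorem does not assume this, and the regularization you propose to remove that hypothesis fails, because it changes the problem. The constraint $\varphi|_{\Omega_+^c}\le 0$ is imposed pointwise on continuous functions, so a locally null piece of $\Omega_+^c$ still cuts down the admissible class, and $M^*(\Omega_+)$ still contains measures charging it. The claim you flag as delicate is true, but it only gives $\varphi\le 0$ on $\overline{\Omega_+^c\setminus N}$ and says nothing at the points of $N$.

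Here is a counterexample. Take $G=\mathbb{T}=\mathbb{R}/\mathbb{Z}$ and $\Omega_+=\mathbb{T}\setminus\{\pm\frac14\}$. Then $N=\Omega_+^c$, $\Omega_+'=\mathbb{T}$, and $\mathcal{D}_{\mathbb{T}}(\Omega_+')=1$ (take $\varphi\equiv 1$). Now write $\varphi=\sum_n c_n e^{2\pi i n x}$ with $c_n=c_{-n}\ge 0$ and $\sum_n c_n=1$. Then $\varphi(\frac14)=c_0+2\sum_{n\ge1}c_n\cos\frac{\pi n}{2}\ge 2c_0-1$. So $\varphi(\frac14)\le 0$ forces $\int\varphi\,\textup{d}\lambda_{\mathbb{T}}=c_0\le\frac12$, with equality for $\frac12+\frac12\cos(4\pi x)$; thus $\mathcal{D}_{\mathbb{T}}(\Omega_+)=\frac12$. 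On the dual side, $\frac14(\delta_{1/4}+\delta_{-1/4})\in M^*(\Omega_+)$ certifies $\alpha=\frac12$, while $M^*(\Omega_+')=\{0\}$ forces $\alpha\ge1$.

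Your fallback also misplaces the role of Assumption~O. In the proof of Lemma~\ref{lemma:C_epsilon}, it upgrades the sign information on the weak $L^2$-limit $\psi$ from ``almost everywhere on $\Omega_+^c$'' to ``everywhere on $\Omega_+^c$''. When $\Omega_+^c$ is null, that step yields nothing. Weak limits do not respect point constraints: $\frac12+\frac12\cos(2\pi m x)$ with $m\equiv 2 \pmod 4$ vanishes at $\frac14$, yet tends weakly to $\frac12$.

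The missing idea is that the one-sided case needs neither a limit argument nor any topological hypothesis. This is why the paper derives the statement from Theorem~\ref{theorem:main_duality_G} with $K=\{0\}$ and $\sigma=\delta_0$, not from Theorem~\ref{theorem:main_group}; Wiener's condition holds since $\widehat{\delta_0}\equiv 1$. The well-posedness required by Duffin's theorem is Theorem~\ref{theorem:one-sided}, proved by the Arestov--Babenko homogeneity argument. An element $g\in\ell^1_+(\Gamma)$ satisfies $Tg+\mathbbm{1}_G\le\varepsilon$ on $\Omega_+^c$ if and only if $f=g/(1-\varepsilon)$ satisfies $Tf+\mathbbm{1}_G\le 0$ there. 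So the relaxed value is exactly $(1-\varepsilon)u\to u$, which gives $u=v$. Then $\mathcal{D}_G(\Omega_+)=1/(1+u)$, Lemma~\ref{lemma:dual_formulation}, and the substitution $z=1/\alpha$ yield $\mathcal{D}_G(\Omega_+)=\mathcal{D}_G(\Omega_+)^*$. The rescaling works only because there is a single sign constraint. A second constraint $\varphi\ge-\varepsilon$ on $\Omega_-^c$ would need the factor $1+\varepsilon$ instead, and no single rescaling absorbs both. That is why the two-sided case relies on the compactness argument of Lemma~\ref{lemma:C_epsilon}, and hence on Assumption~O.
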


Theorems~\ref{theorem:main_group} and~\ref{theorem:main_group_G} follow from the more general Theorems~\ref{theorem:main_duality} and~\ref{theorem:main_duality_G}; see the remarks at the end of Section~\ref{section:A-B_scheme}.

\section{Notation and Preliminaries} \label{section:notation}

We will use the following notation. For a set $S \subset G$ we denote by $\intt{S}$ the interior of $S$, by $\overline{S}$ its closure, and by $S^c$ its complement. For two sets $N$, $S$ the notation $N \Subset S$ means that $N$ is a compact subset of $S$. We say that $S$ is symmetric if $S^{-1} = S$. For a subset \(S\subset G\), \(\mathbbm{1}_{S}\) is its indicator function satisfying \(\mathbbm{1}_S(g) = 1\) for all \(g \in S\) and \(\mathbbm{1}_ S(g) = 0\) for all \(g \notin S\).

Let $G$ be a compact group with identity \(e\) and $K$ a closed subgroup of $G$. Denote by $\lambda_{G}$ and $\lambda_K$ the normalised (i.e., $\lambda_{G}(G)=1=\lambda_{K}(K)$) Haar measures of $G$ and~$K$, respectively. A function $\ff: G \to \mathbb{R}$ is \emph{$K$-bi-invariant} if $\ff(kgk') = \ff(g)$ for all $k, k' \in  K$ and for all $g \in G$. We shall call a subset $U \subset G$ $K$-bi-invariant if its indicator function~$\mathbbm{1}_{U}$ is $K$-bi-invariant. For $1 \le p < \infty$, we denote by $L^{p}(G)$ the collection of real-valued $p$-integrable $\ff: G \to \mathbb{R}$ satisfying
\begin{equation*}
    \|\ff\|_{p}:= \left(\int_G|\ff(g)|^p \,\mbox{d}\lambda_G(g) \right)^\frac{1}{p}, \quad 1 \le p < \infty,
\end{equation*}
by $L^{\infty}(G)$ the collection of functions $\ff: G \to \mathbb{R}$ satisfying
\begin{equation*}
    \|\ff\|_\infty:= \esssup_{g \in G}|\ff(g)|,
\end{equation*}
and by $C(G)$ the collection of continuous functions $\ff: G \to \mathbb{R}$.
We denote by $L^{p}(G)^K$ the subset of $L^p(G)$ consisting of $K$-bi-invariant functions and $C(G)^{K}$ the subset of $C(G)$  consisting of $K$-bi-invariant functions. The convolution $\ff \star \psi$ of functions $\ff, \psi: G \to \mathbb{R}$ is defined as
\begin{equation*}
\ff \star \psi (g) := \int_{G} \ff(h) \psi(h^{-1}g) \,\mbox{d}\lambda_{G}(h), \quad g \in G,
\end{equation*}
whenever the integral exists.

The pair $(G,K)$ is called a (compact) \emph{Gelfand pair} if the convolution algebra $L^1(G)^{K}$ is commutative. For a function $\ff : G \to \RR$, we will write $\ff \gg 0$ if $\ff$ is \emph{positive definite}, i.e., if
\begin{equation*}\label{posdefdef}
\sum_{n=1}^{N} \sum_{m=1}^N c_n \overline{c_m} \ff(g_n^{-1}g_m) \ge 0
\end{equation*}
for all $N \in \NN$, $g_1, \ldots, g_N \in G$ and $c_1,\ldots, c_N \in \CC$. Denote by $M(G)$ the space of real-valued regular signed Borel measures on $G$, and by $M(G)^K$ its subset consisting of $K$-bi-invariant measures $\mu$, i.e., of measures $\mu$ satisfying $\mu(kAk')=\mu(A)$ for all $k, k' \in K$ and all Borel sets~$A$. For $\mu \in M(G)$, the $K$-periodisation of $\mu$ is the $K$-bi-invariant measure $\mu^K \in M(G)^{K}$ defined by
\begin{equation*}\label{Kaverage}
    \mu^{K}(A) := \int_{K}\int_{K}\mu(kAk') \,\textup{d}\lambda_{K}(k) \,\textup{d}\lambda_{K}(k').
\end{equation*}
Identifying $\ff \in C(G)$ with the absolutely continuous measure $\mu_{\ff} = \ff \,\mbox{d}\lambda_{G}$ it is easy to see that if $\ff$ is positive definite, so is the function $\ff^K$ associated with the measure $\mu^K$ by $\mu^K = \ff^K \,\mbox{d}\lambda_G$. In fact, we have the following simple lemma.

\begin{lemma}\label{Ksymmetrisation} Let $G$ be a compact group, $K$ a closed subgroup of $G$, and $U$ a $K$-bi-invariant subset of $G$. Let $\ff \in C(G)$, then $\ff^K \in C(G)^K$ and the following statements hold. 
\begin{itemize}
    \item [(i)] If $\ff|_{U} \le 0$ then $\ff^K|_U \le 0$, and if $\ff|_{U} \ge 0$ then $\ff^K|_U \ge 0$.
    \item [(ii)] If $\ff$ is positive definite, then so is $\ff^K$.
    \item [(iii)] We have the equality of the integrals
    \begin{equation*}
        \int_{G} \ff^{K}(g) \,\textup{d}\lambda_{G}(g) =   \int_{G} \ff(g) \,\textup{d}\lambda_{G}(g).
    \end{equation*}
\end{itemize}
\end{lemma}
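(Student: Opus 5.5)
The plan is to first write $\ff^K$ explicitly as a double average of translates and then derive all four claims from that formula. Unwinding the definition of $\mu^K$ for $\mu=\ff\,\textup{d}\lambda_G$ uses left and right invariance of $\lambda_G$; on a compact group $G$ is unimodular, so the modular function causes no trouble. This gives
\begin{equation*}
\ff^K(g)=\int_K\int_K \ff(kgk')\,\textup{d}\lambda_K(k)\,\textup{d}\lambda_K(k'),\qquad g\in G.
\end{equation*}
The inverses that appear can be absorbed by the substitution $k\mapsto k^{-1}$, since $\lambda_K$ is inversion invariant.

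Continuity of $\ff^K$ comes from uniform continuity of $\ff$ on the compact group $G$. For every $\ve>0$ there is a neighbourhood $V$ of $e$ with $|\ff(x)-\ff(y)|<\ve$ whenever $xy^{-1}\in V$. For $g'$ close to $g$ we have $kgk'(kg'k')^{-1}=k(gg'^{-1})k^{-1}$. Compactness of $K$ lets us choose a neighbourhood $W$ with $kWk^{-1}\subset V$ for all $k\in K$, and this gives $|\ff^K(g)-\ff^K(g')|\le\ve$. $K$-bi-invariance follows from invariance of $\lambda_K$ under translation. Hence $\ff^K\in C(G)^K$.

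For (i), if $g\in U$ then $kgk'\in U$ for all $k,k'\in K$, because $U$ is $K$-bi-invariant. So the integrand has a fixed sign, and the average has the same sign.

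For (iii), integrate the formula over $G$ and apply Fubini, which is legitimate since $\ff$ is bounded and continuous. The inner integral $\int_G\ff(kgk')\,\textup{d}\lambda_G(g)$ equals $\int_G\ff\,\textup{d}\lambda_G$ by bi-invariance of Haar measure on a compact group. The outer average over $K\times K$ then contributes a factor of $1$.

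For (ii), take points $g_1,\dots,g_N\in G$ and coefficients $c_1,\dots,c_N\in\CC$. We must show $\sum_{n,m}c_n\overline{c_m}\,\ff^K(g_n^{-1}g_m)\ge 0$. Insert the formula and rewrite $\ff(kg_n^{-1}g_mk')$ as $\ff\bigl((g_nk^{-1})^{-1}(g_mk')\bigr)$. This does not immediately have the form $x_n^{-1}x_m$ with the same $k$ on both sides. The fix is to use the right-average structure more cleverly. Any positive definite $\ff$ satisfies $\ff(x^{-1})=\overline{\ff(x)}=\ff(x)$. Moreover, the function $g\mapsto\ff(k g k^{-1})$ is positive definite, being the composition with an automorphism. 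Also $g\mapsto\int_K\ff(gk')\,\textup{d}\lambda_K(k')$ is positive definite, because it equals $\ff\star\nu$ for the normalised measure $\nu=\lambda_K$ pushed into $G$, and $\nu$ is idempotent and symmetric, so $\nu=\nu\star\widetilde{\nu}$. Convolution of the form $\nu\star\ff\star\widetilde{\nu}$ preserves positive definiteness. Concretely, the quadratic form becomes $\int\!\!\int$ of a quadratic form in the points $g_mk'$ and $g_nk$, which is nonnegative. Writing $\ff^K=\lambda_K\star\ff\star\lambda_K$, with $\lambda_K$ symmetric, we get
\begin{equation*}
\sum_{n,m}c_n\overline{c_m}\ff^K(g_n^{-1}g_m)=\int_K\int_K\sum_{n,m}c_n\overline{c_m}\,\ff\bigl((g_nk)^{-1}(g_mk')\bigr)\,\textup{d}\lambda_K(k)\,\textup{d}\lambda_K(k').
\end{equation*}
Positivity of the right-hand side follows by approximating the double integral with Riemann-type sums $\frac1{J^2}\sum_{i,j}$ over points $k_i$. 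Each such sum is the quadratic form of $\ff$ at the $NJ$ points $g_nk_i$ with coefficients $c_n/J$, hence nonnegative. Alternatively, the limit can be taken weakly using continuity of $\ff$.

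The main obstacle is exactly this step: choosing the representation $\ff^K=\lambda_K\star\ff\star\lambda_K$, with the inverse placed correctly, so that the kernel becomes a genuine quadratic form in the points $g_nk$. Once this is in place, the justification of the limit is routine by uniform continuity of $\ff$.
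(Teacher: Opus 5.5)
Your proposal is correct. On continuity and (i) it coincides with the paper, which also appeals to uniform continuity of $\ff$ on $G$ and to the $K$-bi-invariance of $U$. Your continuity argument is in fact more careful: you notice that $(kgk')(kg'k')^{-1}=k(gg'^{-1})k^{-1}$ involves a conjugation, and you control it with a neighbourhood $W$ satisfying $kWk^{-1}\subset V$ for all $k\in K$.

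The real difference is in (ii) and (iii). The paper gives no argument there and simply cites \cite[Lemma 9]{ramabulana}, while you prove both directly. For (iii) you use Fubini and bi-invariance of $\lambda_G$. For (ii) you use the identity
\begin{equation*}
\sum_{n,m}c_n\overline{c_m}\,\ff^K(g_n^{-1}g_m)=\int_K\int_K\sum_{n,m}c_n\overline{c_m}\,\ff\bigl((g_nk)^{-1}(g_mk')\bigr)\,\textup{d}\lambda_K(k)\,\textup{d}\lambda_K(k')
\end{equation*}
and a discretisation with the same nodes in both variables, so that every approximating sum is a quadratic form of $\ff$ at the points $g_nk_i$. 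The citation keeps the paper short; your version makes the lemma self-contained and shows exactly why the averaging must be two-sided.

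For the limit you do not need equal weights. Take a finite Borel partition $K=E_1\cup\dots\cup E_J$ into small pieces with $k_i\in E_i$. The sum with weights $\lambda_K(E_i)\lambda_K(E_j)$ is still a quadratic form, with coefficients $c_n\lambda_K(E_i)$, because the weights are real. Left and right uniform continuity of $\ff$ then bounds the error directly.

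One side remark in your (ii) is false for non-Abelian $G$ and should be deleted. The one-sided average $g\mapsto\int_K\ff(gk')\,\textup{d}\lambda_K(k')$ need not be positive definite. The identity $\nu=\nu\star\widetilde{\nu}$ only yields $\ff\star\nu=\ff\star\nu\star\widetilde{\nu}$, which is not of the sandwich form $\widetilde{\nu}\star\ff\star\nu$. For a counterexample, take $G=SO(3)$, let $K=SO(2)$ be the rotations fixing the third basis vector, and let $\ff(g)=u^{T}gu$ with $u=(1,0,1)/\sqrt2$. The one-sided average is then $\frac12(g_{13}+g_{33})$. This is not invariant under $g\mapsto g^{-1}=g^{T}$, so it cannot be a real positive definite function. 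Your final argument never uses this claim, so the proof stands once it is removed.
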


\begin{proof}

The continuity of $\ff^K$ follows from the uniform continuity of $\ff$ on the compact group $G$.
Note that if $U$ is $K$-bi-invariant and $\ff \le 0$ on $U$, then it is straightforward that
\begin{equation*}%\label{Kaverage}
    \ff^{K}(g) := \int_{K}\int_{K} \ff(kgk')\textup{d}\lambda_{K}(k)\textup{d}\lambda_{K}(k') \le 0
\end{equation*}
for $g \in U$. Similarly for $\ff^K \ge 0$ on $U$.

Statements (ii) and (iii) were proven in \cite[Lemma 9]{ramabulana}.
\end{proof}

A \emph{spherical function} for the Gelfand pair $(G,K)$ is a continuous function $\gamma: G \to \mathbb{C}$ such that 
\begin{equation*}
    \gamma(g_1)\gamma(g_2) = \int_{K}\gamma(g_1kg_2) \,\mbox{d}\lambda_{K}(k), \quad \gamma(e)=1,
\end{equation*}
for all $g_1, g_2 \in G$. By \cite[Theorem 8.2.6]{wolf}, a spherical function \(\gamma: G \to \mathbb{C}\) for \((G,K)\) is necessarily \(K\)-bi-invariant.

A spherical function for a compact Gelfand pair is necessarily positive definite \cite[Theorem 9.10.1]{wolf}. The \emph{spherical dual} or \emph{dual space} of the Gelfand pair \((G,K)\) is defined as the collection $\Gamma$ of spherical functions for $(G,K)$. Since \(G\) is compact, \(\Gamma\) is discrete \cite[Proposition 2.4]{bergetal}. Moreover, on $\Gamma$ we put the Plancherel measure \cite[Theorem 2.1]{bergetal} or \cite[Theorem 9.4.1]{wolf}. Since $\Gamma$ is discrete, the Plancherel measure reduces to a purely atomic measure with weights given in Theorem \ref{expansion} below.

The Fourier transform of \(\ff \in L^1(G)^K\) is defined as
\begin{equation*}
    \widehat{\ff}(\gamma):=\int_{G} \ff(g) \overline{\gamma(g)} \,\mbox{d}\lambda_{G}(g), \quad \gamma \in \Gamma.
\end{equation*}
The Fourier transform of \(\mu \in M(G)^K\) is defined as
\begin{equation*}
    \widehat{\mu}(\gamma):=\int_{G} \overline{\gamma(g)} \,\mbox{d}\mu(g), \quad \gamma \in \Gamma.
\end{equation*}
Consistent with positive definite measures from Section \ref{section:introduction} where $G$ is a compact Abelian group, a $K$-bi-invariant measure $\mu \in M(G)^{K}$ is called positive definite if for all continuous ``test functions'' $u\in C(G)^{K}$ the integral of $\mu$ against the convolution square $u \star \widetilde{u}$ is non-negative: $\int_G u \star \widetilde{u} \,\mbox{d} \mu \ge 0.$ As spherical functions are their own convolution squares \cite[Theorem 8.2.6]{wolf}, this in particular means that $\widehat{\mu}(\gamma) \ge 0$ $(\gamma \in \Gamma)$, so that the Fourier transform is non-negative. 
Recall (see \cite[Proposition 8.4.6, Theorem 8.4.8]{wolf}) that for every continuous positive definite \(K\)-bi-invariant function \(\ff: G \to \mathbb{C}\) we can associate an irreducible unitary representation \(\pi_\ff: G \to GL(H_{\pi_\ff})\) on a Hilbert space \(H_{\pi_\ff}\) and a vector \(u \in H_{\pi_\ff}\) such that \(\ff(g)=(u, \pi_\ff(g)u)\) for all \(g \in G\). Moreover, the representation \(\pi_\ff\) is unique up to equivalence of representations. Since \(G\) is compact and \(\pi_\ff\) is irreducible, it is well-known that \(\pi_\ff\) is a finite-dimensional representation. We put \(\delta(\ff)= \deg(\pi_\ff) = \dim(H_{\pi_\ff})\).
We shall need the following theorem.

\begin{theorem}[{\cite[Proposition 9.10.4]{wolf}, \cite[Theorem 2.6]{bergetal}}]\label{expansion}
Let \((G,K)\) be a compact Gelfand pair. The family \(\{\sqrt{\delta(\gamma)}\gamma: \gamma \in \Gamma\}\) forms an orthonormal basis for the space of square integrable complex-valued $K$-bi-invariant functions. In particular, each complex-valued square integrable $K$-bi-invariant function has the orthogonal expansion
\[
\ff  = \sum_{\gamma \in \Gamma}\delta(\gamma)\widehat{\ff}(\gamma)\gamma,
\]
which converges in the $L^2$ norm. Moreover, a continuous $K$-bi-invariant function \(\ff: G \to  \CC\) is positive definite if and only if there exists a family \((B(\gamma))_{\gamma \in \Gamma}\) of non-negative numbers satisfying \(\sum_{\gamma \in \Gamma}B(\gamma) < \infty\) such that 
\begin{equation}\label{uniformconvergence}
\ff(g) = \sum_{\gamma \in \Gamma}B(\gamma)\gamma(g), \quad g\in G,
\end{equation}
and \(B(\gamma) = \delta(\gamma)\widehat{\ff}(\gamma)\). The series in \eqref{uniformconvergence} converges uniformly on \(G\).
\end{theorem}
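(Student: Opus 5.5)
The statement just above is Theorem~\ref{expansion}, a cited result from Wolf and Berg et al.; I would reconstruct its proof from Peter--Weyl theory rather than prove it from scratch.

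\emph{Orthonormal basis.} First I will recall that every spherical function of a compact Gelfand pair has the form $\gamma(g)=(u,\pi(g)u)$. Here $\pi$ is an irreducible unitary representation of $G$ whose space of $K$-fixed vectors is one-dimensional, and $u$ is a unit $K$-fixed vector. The Schur orthogonality relations for matrix coefficients then give
\[
\int_G \gamma\overline{\gamma'}\,\textup{d}\lambda_G=\frac{\delta_{\gamma\gamma'}}{\delta(\gamma)},
\]
so the family $\sqrt{\delta(\gamma)}\,\gamma$ is orthonormal. For completeness I will use Peter--Weyl: matrix coefficients are dense in $L^2(G)$. Applying the orthogonal projection $f\mapsto f^K$ (a contraction) to these coefficients shows that $K$-bi-invariant matrix coefficients are dense in $L^2(G)^K$. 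Such a coefficient of $\pi$ involves only $K$-fixed vectors, and that space is one-dimensional, so it is a multiple of the corresponding $\gamma$. The expansion $\ff=\sum_\gamma\delta(\gamma)\widehat{\ff}(\gamma)\gamma$ is then the Fourier expansion in this basis.

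\emph{Characterisation of positive definiteness, sufficiency.} Each $\gamma$ is positive definite, and a series with $B(\gamma)\ge0$ and $\sum B(\gamma)<\infty$ converges uniformly because $|\gamma|\le\gamma(e)=1$. A uniform limit of positive definite functions is positive definite, so $\ff$ is positive definite.

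\emph{Characterisation of positive definiteness, necessity.} This is the main step. Let $\ff$ be continuous, $K$-bi-invariant and positive definite. Positive definiteness implies
\[
\int\!\!\int \ff(g^{-1}h)\,\overline{\gamma(g)}\gamma(h)\,\textup{d}g\,\textup{d}h\ge0 .
\]
Using the convolution identity $\gamma\star\gamma=\gamma/\delta(\gamma)$, this quantity is a positive multiple of $\widehat{\ff}(\gamma)$, so $\widehat{\ff}(\gamma)\ge0$. To get summability I will use an approximate identity $u_n\ge0$ of $K$-bi-invariant functions with $\int u_n=1$ and support shrinking to $e$, and set $\ff_n=u_n\star\ff\star\widetilde{u}_n$. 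Expanding in the orthonormal basis gives $\ff_n=\sum_\gamma\delta(\gamma)\widehat{\ff}(\gamma)\,|\widehat{u}_n(\gamma)|^2\gamma$, which converges absolutely since it is essentially a product of two $L^2$ expansions. Evaluating at $e$ yields
\[
\sum_\gamma \delta(\gamma)\widehat{\ff}(\gamma)\,|\widehat{u}_n(\gamma)|^2=\ff_n(e)\to\ff(e).
\]
For each fixed $\gamma$ we have $\widehat{u}_n(\gamma)\to1$, so Fatou's lemma gives $\sum_\gamma\delta(\gamma)\widehat{\ff}(\gamma)\le\ff(e)<\infty$.

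\emph{Conclusion.} With summability established, the series $\sum_\gamma\delta(\gamma)\widehat{\ff}(\gamma)\gamma$ converges uniformly to a continuous function. That function has the same $L^2$ expansion as $\ff$, so the two agree almost everywhere, and by continuity everywhere. This gives \eqref{uniformconvergence} with $B(\gamma)=\delta(\gamma)\widehat{\ff}(\gamma)$ and uniform convergence on $G$. The Fatou step, which needs the approximate-identity estimate, is the delicate part of the argument.
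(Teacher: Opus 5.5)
The paper gives no proof of this theorem; it is quoted from \cite[Proposition 9.10.4]{wolf} and \cite[Theorem 2.6]{bergetal}. There it comes out of the general harmonic analysis of Gelfand pairs: Plancherel/Peter--Weyl theory for the basis, and, as far as I can tell, a Bochner--Godement-type integral representation of positive definite $K$-bi-invariant functions over the spherical dual, which becomes a sum because $\Gamma$ is discrete. Your reconstruction is correct and more hands-on:
\begin{itemize}
\item Schur orthogonality, together with Peter--Weyl pushed through the projection $f\mapsto f^K$, gives the orthonormal basis.
\item The positive definiteness criterion comes from $\widehat{\varphi}(\gamma)\ge 0$, the smoothing $u_n\star\varphi\star\widetilde{u}_n$ and Fatou, which yields the explicit bound $\sum_\gamma\delta(\gamma)\widehat{\varphi}(\gamma)\le\varphi(e)$.
\end{itemize}
You only use standard facts about compact Gelfand pairs: every irreducible $\pi$ with a nonzero $K$-fixed vector yields an element of $\Gamma$, distinct elements of $\Gamma$ come from inequivalent representations, and $\gamma\star\gamma=\gamma/\delta(\gamma)$. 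So your argument is self-contained, whereas the citation is shorter but hides the mechanism.

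A few small points need fixing.
\begin{itemize}
\item \emph{Conjugation slip.} With your test function, $\iint\varphi(g^{-1}h)\overline{\gamma(g)}\gamma(h)\,\textup{d}\lambda_G(g)\,\textup{d}\lambda_G(h)=\delta(\gamma)^{-1}\int_G\varphi\gamma\,\textup{d}\lambda_G=\delta(\gamma)^{-1}\widehat{\varphi}(\overline{\gamma})$. This is not a multiple of $\widehat{\varphi}(\gamma)$. It is harmless, since $\overline{\gamma}\in\Gamma$, or you can use the test function $\gamma(g)\overline{\gamma(h)}$ instead.
\item \emph{Approximate identity.} For $K\neq\{e\}$, a $K$-bi-invariant function whose support contains $e$ has support containing $K$, so its support cannot shrink to $e$. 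Take instead $u_n=\mathbbm{1}_{KV_nK}/\lambda_G(KV_nK)$ with $V_n$ shrinking open neighbourhoods of $e$. Then $\widehat{u_n}(\gamma)\to 1$ still holds, because $\gamma(kvk')=\gamma(v)\to\gamma(e)=1$. For the Fatou step you only need $|\varphi_n(e)|=\bigl|\iint u_n(y)u_n(h)\varphi(y^{-1}h)\,\textup{d}\lambda_G(y)\,\textup{d}\lambda_G(h)\bigr|\le\varphi(e)$, which is immediate from $|\varphi|\le\varphi(e)$. You do not need $\varphi_n(e)\to\varphi(e)$.
\item \emph{Sufficiency direction.} Say explicitly that integrating the uniformly convergent series against $\overline{\gamma}$ and using orthogonality forces $B(\gamma)=\delta(\gamma)\widehat{\varphi}(\gamma)$, as the statement requires.
\end{itemize}
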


If $f \in \ell^1(\Gamma)$, we define the inverse Fourier transform
$$
\widecheck{f}(g) := \sum_{\gamma \in \Gamma} \delta(\gamma)f(\gamma) \gamma(g), \quad g \in G.
$$
Observe that $\delta(\mathbbm{1}_G) = 1$. One way to see this is to consider the trivial representation \(\pi: G \to GL(\mathbb{C})\) and note that for \(z \in \mathbb{C}\) with \(|z|=1\), we have 
\begin{equation*}
  \mathbbm{1}_{G}(g) ~ = ~(z,\pi(g)z) = ~ (z,z) ~ = z\overline{z} = |z|^2 = 1  
\end{equation*}
for all \(g \in G\).  

Let $\Omega_+$, $\Omega_-$ be $K$-bi-invariant symmetric Borel subsets of $G$ with $e \in \intt{\Omega_+}$. We consider the function class
$$
\mathcal{F}^K_G(\Omega_+, \Omega_-) := \{ \varphi \in C(G)^{K} : \varphi  \gg 0, \ \varphi(e) = 1, \  \varphi|_{\Omega_+^c} \le 0, \  \varphi|_{\Omega_-^c} \ge 0 \}
$$
and the extremal value 
\begin{equation} \label{eq:Delsarte_type_problem}
\mathcal{C}^K_G(\Omega_+, \Omega_-) := \sup \left\{ \int_G \varphi \,\mbox{d}\lambda_G : \varphi \in \mathcal{F}^K_G(\Omega_+, \Omega_-) \right\}. 
\end{equation}
We term this problem the Delsarte-type problem. Problem~\eqref{eq:Delsarte_type_problem} in the setting where $G$ is a not necessarily compact LCA group with identity $0$ and $K =\{0\}$ was studied by Berdysheva, R\'ev\'esz  in~\cite{Berdysheva} and Berdysheva, Ramabulana, R\'ev\'esz in \cite{BRR_Expo}. 
Problem~\eqref{eq:Delsarte_type_problem} includes as particular cases the Tur\'an and Delsarte problems discussed in Section~\ref{section:introduction}. Namely,
if we set $\Omega_+ = \Omega_- = \Omega$ in~\eqref{eq:Delsarte_type_problem}, we arrive at the Tur\'an problem
$$
\mathcal{T}^K_G(\Omega)  : = \mathcal{C}^K_G(\Omega, \Omega) = \sup \left\{ \int_G \varphi \,\mbox{d}\lambda_G : 
 \varphi  \gg 0, \ \varphi(e) = 1, \  \varphi|_{\Omega^c} \equiv 0 \right\}. 
$$
The choice $\Omega_- = G$ (i.e. there is no restriction on the set of negativity of $\varphi$) recovers the Delsarte problem
$$
\mathcal{D}^K_G(\Omega_+)  := \mathcal{C}^K_G(\Omega_+, G) = \sup \left\{ \int_G \varphi \,\mbox{d}\lambda_G :
\varphi  \gg 0, \ \varphi(e) = 1, \  \varphi|_{\Omega_+^c} \le 0 \right\}. 
$$
The problem \eqref{eq:Delsarte_type_problem} in the case where $G$ is non-commutative was introduced by Ramabulana in \cite{ramabulana}. 

Observe that our restrictions on the sets $\Omega_\pm$ are natural. A real-valued positive definite function is always even, therefore it is natural to take the sets $\Omega_\pm$ symmetric. The second assumption $e \in \intt{\Omega_+}$ arises from the fact that the class $\mathcal{F}^K_G(\Omega_+, \Omega_-)$ is non-empty if and only if $e \in \intt{\Omega_+}$. Indeed, if $\ff(e) = 1$, then by continuity of $\ff$ there exists a neighbourhood of $e$ where $\ff$ is positive; therefore, $e$ cannot lie in $\Omega_+^c$ or on the boundary of $\Omega_+$. On the other hand, if $e \in \intt{\Omega_+}$ we can take a $K$-bi-invariant symmetric neighbourhood $V$ of \(e\) such that $V^2 \subset \Omega_+$. Then the  self-convolution $\mathbbm{1}_V \star \mathbbm{1}_V$ is a positive definite, non-negative $K$-bi-invariant function with support in $V^2  \subset \Omega_+$ and with the value at the origin $\mathbbm{1}_V \star\mathbbm{1}_V (e) = \lambda_G(V) > 0$. Therefore, $\frac{1}{\lambda_G(V)} \mathbbm{1}_V \star \mathbbm{1}_V \in \mathcal{F}^K_G(\Omega_+, \Omega_-)$. Finally, the assumption that $\Omega_{\pm}$ are $K$-bi-invariant is natural since for a $K$-bi-invariant function $\ff: G \to \mathbb{R}$, we have that $\ff(g) \ge 0$ ($\ff(g) \le 0$) if and only if $\ff(kgk') \ge 0$ ($\ff(kgk') \le 0$) for all $k,k' \in K$.

%%%%%%%%%%%%%%%%%%%%%%%%%%%%%%%%%%%%%%%%%%%%%%%%%%%%%%%%%%%%%%
%%%%%%%%%%%%%%%%%%%%%%%%%%%%%%%%%%%%%%%%%%%%%%%%%%%%%%%%%%%%%%

\section{Duality in infinite dimensional linear programming: Duffin's approach}\label{section:duality_Duffin}

The aim of this section is to put together---in the form we need---known results concerning problems of infinite dimensional linear programming, and their duality. Duffin~\cite{Duffin} seems to be the first to consider duality relations not only between the values of the primal and the dual problems, but allowing a linear programming problem with weakened restrictions on one of the sides.  His ideas were generalized to a much more general setup (convex programming and even more general problems, and replacing topological duality of spaces by duality of linear spaces induced by a bilinear form), in particular, by  Ioffe and Tikhomirov in~\cite{I-T}, and by Gol'shtein in a number of papers and the book~\cite{golstein} (the latter is unfortunately available only in Russian). The three mentioned sources are the foundation for this section.

Here we will mainly follow the presentation in Duffin's paper \cite{Duffin}, but use notation and terminology adapted to our goals.

Let $E$ be a real linear space with a locally convex topology. Let $E^*$ be its topological dual, i.e., the space of continuous linear functionals on $E$. We denote by $\langle y, x \rangle_1$ the action of $x \in E^*$ on $y \in E$. Note that the bilinear form $\langle \cdot, \cdot \rangle_1$ puts the spaces $E$, $E^*$ in duality in the sense of duality of linear spaces with respect to a bilinear form\footnote{We say that real linear spaces $C$ and $D$ are \emph{in duality} with respect to a bilinear form $\langle \cdot, \cdot \rangle : C \times D \to \RR$, if the following two separation properties hold:

\begin{itemize}

\item[1)] $\forall \, y \in C \setminus \{ 0 \} \ \exists \, x \in D : \langle y, x \rangle \ne 0$,

\item[2)] $\forall \, x \in D \setminus \{ 0 \} \ \exists \, y \in C : \langle y, x \rangle \ne 0$.

\end{itemize}

In this sense of duality, the roles of the spaces $C$ and  $D$ are symmetric.
}.

We select in $E$ an arbitrary closed convex cone which we will denote by $P$ and term it the \emph{positive cone}. We write $y \ge_{P} 0$ if $y \in P$, and $y_1 \ge_{P} y_2$ if $y_1 - y_2 \in P$. The relation $\ge_{P}$ is a preorder on the space $E$. The positive cone $P$ of the space $E$ induces in $E^*$ the \emph{dual cone}
$$
P^* := \{ x \in E^* : \langle y, x \rangle_1 \ge 0 \ \text{for all} \ y \in P \}.
$$
The set $P^*$ is a convex cone. We take $P^*$ for the positive cone of $E^*$, with the corresponding preorder $\ge_{P^*}$ on $E^*$. 

We take a second real linear space $F$ with a locally convex topology. Let $F^*$ be its topological dual. We denote by $\langle z, w \rangle_2$ the action of $w \in F^*$ on $z \in F$. Let a closed convex cone $Q$ be the positive cone in $F$, and its dual cone $Q^*$ be the positive cone in $F^*$.  

Let $T : E^* \to F$ be a linear operator, $b \in F$ and $c \in E$. The main object of this section is the \emph{linear programming problem}
\begin{align} \label{eq:LP-prim-problem}
    u = & \inf \, \langle c,x \rangle_1 \\
    \text{subject to} \quad & x \ge_{P^*} 0,  \nonumber \\
    & Tx  \ge_{Q} b. \nonumber
\end{align}

\begin{definition}

An element $x \in E^*$ is called \emph{feasible} if $x \ge_{P^*} 0$ and $Tx  \ge_{Q} b$. Problem~\eqref{eq:LP-prim-problem} is called \emph{consistent} if there exists at least one feasible $x \in E^*$. If the problem is consistent, the value
$$
u = \inf \{ \langle c,x \rangle_1 : x \ge_{P^*} 0, \  Tx  \ge_{Q} b\}
$$
is called the \emph{value} of problem~\eqref{eq:LP-prim-problem}.

\end{definition}

Note that an element \(x \in E^*\) is feasible if and only if $x \ge_{P^*} 0$ and \(Tx = b + q\) for some \(q \ge_{Q} 0\). With this in mind, the following definition makes sense.

\begin{definition} \label{def:asymptotic-value}

A sequence $(x_n) \subset E^*$ is called \emph{asymptotically feasible} if $x_n \ge_{P^*} 0$ and
$$
T x_n = b + q_n + z_n, \quad \text{where} \quad q_n \ge_{Q} 0, \ z_n \to 0.
$$
Problem~\eqref{eq:LP-prim-problem} is called \emph{asymptotically-consistent} if there exists at least one asymptotically feasible sequence $(x_n) \subset E^*$. If the problem is asymptotically-consistent, the value
\begin{equation} \label{eq:asymptotic-value}
u_a := \inf \{ \liminf_{n} \, \langle c, x_n \rangle_1 : (x_n) \text{ is an asymptotically feasible sequence}  \}
\end{equation}
is called the \emph{asymptotic-value} of problem~\eqref{eq:LP-prim-problem}. 

\end{definition}

A simple diagonal argument shows that in~\eqref{eq:asymptotic-value} it is sufficient to take the infimum over all asymptotically feasible sequences $(x_n)$ such that the limit $\lim_n \, \langle c, x_n \rangle_1$ exists.

Duffin~\cite{Duffin} calls asymptotically-consistent problems sub-consistent, and asymptotic-values sub-values. Ioffe and Tikhomirov~\cite{I-T} call asymptotically-consistent problems weakly consistent, and asymptotic-values weak values. Gol'shtein~\cite{golstein} calls asymptotically feasible sequences generalized plans, and the problem $u_a$ the generalized problem. Ioffe and Tikhomirov~\cite{I-T} and Gol'shtein~\cite{golstein} work in somewhat different setups that differ from our presentation---and that of Duffin---by choices of topologies and duality relations between the spaces in each pair. Ioffe and Tikhomirov~\cite{I-T} work in Definition~\ref{def:asymptotic-value} with nets.  Both \cite{Duffin} and \cite{golstein} work with sequences, but both authors mention that their results can be extended verbatim to nets when working with general locally convex spaces (and not, say, with metric spaces). We restrict ourselves to sequences as this will be sufficient for our application (in our application the space $F$ will be the normed space of continuous functions on a compact group).

It is straightforward that if a problem is consistent, then it is also asymptotically-consistent and
$$
u_a \le u.
$$

\begin{definition}

Problem ~\eqref{eq:LP-prim-problem} is called \emph{well-posed} if
$$
u_a = u.
$$

\end{definition}

Well-posedness is not defined in Duffin~\cite{Duffin}. The above definition follows Ioffe and Tikhomirov~\cite{I-T}. Gol'shtein~\cite{golstein} calls such problems correctly posed. He defines this property in different terms, but proves that his definition can be reduced to considering asymptotically feasible sequences. Note that the setups in~\cite{I-T} and~\cite{golstein} are slightly different from ours.

We assume that the linear operator $T : E^*  \to F$ has an adjoint operator in the following sense: this is a linear operator $T^* : F^* \to E$ with the defining property
$$
\langle T^*w, x \rangle_1 = \langle Tx, w \rangle_2  \quad \text{for all} \quad x \in E^*, \ w \in F^*.
$$
Note that $T^*$ always exists as a linear operator from $F$ to $E^{**}$. Here we assume more, namely, that $T^*w \in E$. 
In our application below in Section~\ref{section:A-B_scheme}, such adjoint operator will exist. Note that we do not assume continuity of either of the operators $T$, $T^*$ (with respect to the preliminarily given topologies).

The dual problem of~\eqref{eq:LP-prim-problem} is
\begin{align} \label{eq:LP-dual-problem}
    v = & \sup \, \langle b, w \rangle_2 \\
    \text{subject to} \quad &  w \ge_{Q^*} 0,  \nonumber \\
    & T^*w  \le_{P} c. \nonumber
\end{align}

The dual problem of~\eqref{eq:LP-dual-problem} is again \eqref{eq:LP-prim-problem}. In Duffin's setup, the roles of \eqref{eq:LP-prim-problem} and \eqref{eq:LP-dual-problem} are symmetric.

The main statement of this section is the following duality theorem.

\begin{theorem} \label{theorem:Duffin-duality}

The problem of linear programming~\eqref{eq:LP-prim-problem} is asymptotically-consistent and has a finite asymptotic-value if and only if the dual problem~\eqref{eq:LP-dual-problem} is consistent and has a finite value. Moreover, in this case
$$
u_a = v.
$$

\end{theorem}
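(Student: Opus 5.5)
The plan is to prove Duffin's theorem by separating a closed convex set in $F\times\RR$ from a point. The central object is the set
$$
\mathcal{K} := \{ (Tx - q,\ \langle c, x\rangle_1 + r) : x \ge_{P^*} 0,\ q \ge_Q 0,\ r \ge 0 \} \subset F \times \RR
$$
and its closure $\overline{\mathcal{K}}$. The set $\mathcal{K}$ is a convex cone, so $\overline{\mathcal{K}}$ is a closed convex cone.

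The first step is to show that
$$
u_a = \inf\{ t : (b,t) \in \overline{\mathcal{K}} \}.
$$
This is essentially a restatement of asymptotic feasibility. An asymptotically feasible sequence $x_n$ with $\langle c,x_n\rangle_1 \to t$ gives $(b+z_n, \langle c,x_n\rangle_1) \in \mathcal{K}$, which converges to $(b,t)$. Conversely, sequences in $\mathcal{K}$ converging to $(b,t)$ give asymptotically feasible sequences, with the slack $r_n \ge 0$ absorbed. Here we use that $F$ is normed (metrizable) in our application, so sequences suffice; in general one would use nets.

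Weak duality $v \le u_a$ is routine. For $w$ dual feasible and $(x_n)$ asymptotically feasible,
$$
\langle c, x_n\rangle_1 \ge \langle T^*w, x_n\rangle_1 = \langle Tx_n, w\rangle_2 = \langle b,w\rangle_2 + \langle q_n,w\rangle_2 + \langle z_n, w\rangle_2 .
$$
The middle term is $\ge 0$ since $w\in Q^*$, and the last term tends to $0$ since $w$ is continuous. Hence $u_a \ge v$. This already gives one direction of the equivalence: if the dual is consistent with finite value $v$, and the primal is asymptotically consistent, then $u_a \ge v > -\infty$.

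The main step is the reverse inequality $u_a \le v$, together with the existence of a dual feasible $w$, obtained by Hahn--Banach separation. Suppose the primal is asymptotically consistent with finite $u_a$, and fix $\varepsilon>0$. Then $(b, u_a-\varepsilon)\notin\overline{\mathcal{K}}$, so there is a continuous functional $(w,\beta)\in F^*\times\RR$ and a constant $\gamma$ with
$$
\langle z,w\rangle_2 + \beta s \ \ge\ \gamma\ >\ \langle b,w\rangle_2 + \beta(u_a-\varepsilon) \quad \text{for all } (z,s)\in\overline{\mathcal{K}}.
$$
Since $\overline{\mathcal{K}}$ is a cone, we may take $\gamma=0$. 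The cone property then yields the sign information:
\begin{itemize}
\item letting $r\to\infty$ gives $\beta\ge 0$;
\item letting $q$ range over $Q$ gives $-w\in Q^*$, so after renaming $w\mapsto -w$ we have $w\ge_{Q^*}0$;
\item letting $x$ range over $P^*$ gives $\beta\langle c,x\rangle_1 - \langle T^*w,x\rangle_1\ge 0$ on $P^*$.
\end{itemize}

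The obstacle I expect is ruling out $\beta=0$, the vertical hyperplane. If $\beta=0$, then $w$ separates $b$ from the projection of $\overline{\mathcal{K}}$ to $F$. This contradicts the existence of the point $(b,u_a)\in\overline{\mathcal{K}}$, which holds by asymptotic consistency and the fact that the infimum is attained in the closed set. So $\beta>0$, and we normalize $\beta=1$.

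It remains to translate $\langle c - T^*w, x\rangle_1\ge 0$ for all $x\in P^*$ into $T^*w\le_P c$. This requires $P^{**}=P$ in $E$, which follows from the bipolar theorem because $P$ is a closed convex cone in the locally convex space $E$, whose dual is $E^*$. Then $w$ is dual feasible and $\langle b,w\rangle_2 > u_a-\varepsilon$. Letting $\varepsilon\to0$ gives $v\ge u_a$.

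For the converse of the equivalence, assume the dual is consistent with finite value. I would apply the argument above symmetrically: view the dual as a primal, and use that consistency implies asymptotic consistency. This shows the primal is asymptotically consistent. Alternatively, if the primal is not asymptotically consistent, then $b\notin\overline{\mathrm{proj}\,\mathcal{K}}$, and separating gives a $w_0$ that can be added in arbitrarily large multiples to a dual feasible point, making $v=+\infty$, a contradiction. Finiteness of $u_a$ then follows from weak duality.
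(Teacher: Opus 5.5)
The paper gives no proof of its own here; it cites Duffin's Theorem~1 and the corresponding results of Ioffe--Tikhomirov and Gol'shtein. Your separation argument in $F\times\RR$ is the standard route. The direction ``asymptotically consistent with finite $u_a$ $\Rightarrow$ dual consistent and $v=u_a$'' is correct as you wrote it, including the following steps:
\begin{itemize}
\item the reduction to $\gamma=0$ and the sign analysis;
\item the exclusion of $\beta=0$ via $(b,u_a)\in\overline{\mathcal K}$;
\item the bipolar step $P^{**}=P$;
\item the use of metrizability of $F$ to replace closure by sequential closure, which is exactly the restriction the paper makes.
\end{itemize}
The gap is in the converse, where you need to show that $u_a$ is finite.

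You say finiteness of $u_a$ ``follows from weak duality'', but weak duality only gives $u_a\ge v$. That excludes $u_a=-\infty$, not $u_a=+\infty$, and asymptotic consistency alone does not exclude $+\infty$ either. Take $E=\RR$, $P=\RR_+$, $F=\RR^3$, $Q=\{q: q_3\ge (q_1^2+q_2^2)^{1/2}\}$, $Tx=x(1,0,1)$, $b=(0,1,0)$ and $c=1$. Then $x_n=n$ is asymptotically feasible. However, every asymptotically feasible sequence has $x_n\to\infty$, because a bounded subsequence would yield $(x^*,-1,x^*)\in Q$. Hence $u_a=+\infty$.

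Your forward argument cannot be reused here, since it needs $u_a<\infty$ to supply a point $(b,t)\in\overline{\mathcal K}$ that rules out $\beta=0$. Your first suggested route, applying the argument to the dual viewed as a primal, also fails. It would require the dual's asymptotic value to be finite, which is not given. Its conclusion would also be that the primal is \emph{consistent}, which is false in general: in the same example with $c=0$ one gets $v=u_a=0$, yet the primal is infeasible.

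The repair uses only your own tools. Fix $t>v$ and suppose $(b,t)\notin\overline{\mathcal K}$. Separation gives $w\in Q^*$ and $\beta\ge 0$ with $\beta c-T^*w\in P$ and $\langle b,w\rangle_2>\beta t$.
\begin{itemize}
\item If $\beta>0$, then $w/\beta$ is dual feasible with value greater than $t$, hence greater than $v$.
\item If $\beta=0$, then $w$ is exactly the recession direction from your second route. Adding $\lambda w$ to an existing dual feasible point sends the dual value to $+\infty$.
\end{itemize}
Both cases contradict the finiteness of $v$. So $(b,t)\in\overline{\mathcal K}$ for every $t>v$, which gives asymptotic consistency and $u_a\le v<\infty$ at the same time. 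This also makes your separate recession-direction argument for asymptotic consistency unnecessary.
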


This statement is Theorem~1 in~\cite{Duffin}. Statements which only slightly differ by assumptions on the spaces are particular cases of more general Theorem~2.1 and Corollary in~\cite{I-T}, and of Theorem~3.1 in Chapter~2 of~\cite{golstein}. Moreover, it is proven in~\cite{I-T} that their (more general than Theorem~\ref{theorem:Duffin-duality}) result is equivalent to the Fenchel-Moreau Theorem (see Theorem 1.1 in~\cite{I-T}). 

\begin{corollary} \label{corollary:Duffin-strong-duality}

If problem~\eqref{eq:LP-prim-problem} is well-posed, then the strong duality holds:
$$
u = v.
$$

\end{corollary}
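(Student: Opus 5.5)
The statement is Corollary~\ref{corollary:Duffin-strong-duality}, and the plan is to read it off directly from Theorem~\ref{theorem:Duffin-duality} together with the definition of well-posedness. I assume here, as is implicit in the definition of well-posedness, that problem~\eqref{eq:LP-prim-problem} is consistent with a finite value $u$, so that $u_a = u$ is an equality of real numbers.

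First I check that the hypotheses of Theorem~\ref{theorem:Duffin-duality} hold. Consistency gives a feasible $x$. The constant sequence $x_n := x$ is then asymptotically feasible, taking $q_n := Tx - b \ge_Q 0$ and $z_n := 0$. Hence the problem is asymptotically-consistent, and $u_a \le u$ as noted before the definition of well-posedness. Well-posedness says $u_a = u$, and since $u$ is finite, $u_a$ is finite as well.

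Theorem~\ref{theorem:Duffin-duality} now applies. It says the dual problem~\eqref{eq:LP-dual-problem} is consistent, has a finite value $v$, and $u_a = v$. Combining this with $u_a = u$ gives $u = v$, which is the claim.

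The only delicate point is the finiteness convention. If $u = -\infty$ were allowed, then $u_a = -\infty$, and Theorem~\ref{theorem:Duffin-duality} would give no information. In that case I would fall back on weak duality: for feasible $x$ and $w$,
\[
\langle b,w\rangle_2 \le \langle Tx,w\rangle_2 = \langle T^*w,x\rangle_1 \le \langle c,x\rangle_1 .
\]
This shows that the dual must be infeasible, so $v = -\infty$ under the convention $\sup\emptyset = -\infty$, and again $u = v$. I expect the corollary to be read with finite values, so this case is only a remark.
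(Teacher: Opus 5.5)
Your proof is correct and is the intended argument: the paper gives no separate proof, reading the corollary off Theorem~\ref{theorem:Duffin-duality} as $u = u_a = v$. Your checks of asymptotic consistency and finiteness of $u_a$, and the weak-duality remark covering $u=-\infty$, make explicit the hypotheses the paper leaves implicit.
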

%%%%%%%%%%%%%%%%%%%%%%%%%%%%%%%%%%%%%%%%%%%%%%%%%%%%%%%%%%%%%%
%%%%%%%%%%%%%%%%%%%%%%%%%%%%%%%%%%%%%%%%%%%%%%%%%%%%%%%%%%%%%%

\section{The dual cone of the cone $Q$} \label{section:Q-polar}

Recall that $\Omega_+$, $\Omega_-$ are two $K$-bi-invariant symmetric Borel subsets of $G$ with $e \in \intt{\Omega_+}$. We denote $A:=\Omega_+^c$ and $B:=\Omega_-^c$.

Our study will naturally lead to the consideration of the convex closed cone $Q = M \cap L$ in the space $C(G)^{K}$, where
\begin{equation} \label{eq:K_and_L}
M  := \{ \ff \in C(G)^K~:~ \ff|_A \le 0\} \quad \text{and} \quad L := \{ \ff\in C(G)^K~:~ \ff|_B\ge 0\}.
\end{equation}
By continuity of $\ff$ it follows that if $\ff \le 0$ on $A$, then $\ff \le 0$ also on the closure $\overline{A}$. A similar observation is of course true for $B$. Therefore,
$$
M = \{ \ff \in C(G)^K~:~ \ff|_{\overline{A}} \le 0\}, \quad L = \{ \ff\in C(G)^K~:~ \ff|_{\overline{B}}\ge 0\}.
$$

Since $G$ is compact, the dual of $C(G)$ is the space  $M(G)$ of finite, regular signed Borel measures on $G$. Since the action of any $\mu \in M(G)$ on $\ff \in C(G)^K$ depends on its average over double cosets $KgK$, we may replace $\mu$ by $\mu^K$, and we have
\begin{equation*}
    \int_{G} \ff(g) \,\mbox{d} \mu(g) = \int_{G} \ff(g) \,\mbox{d}\mu^{K}(g) \text{ for all } \ff \in C(G)^K.
\end{equation*}
In other words, the dual of $C(G)^K$ is the space $M(G)^K$.\footnote{Note that if we do not restrict \(M(G)\) to \(M(G)^K\), then the bilinear form for the pair \((C(G)^K, M(G))\) does not put the pair in duality since the measures \(\mu\) and \(\mu^K\) give the same value on any \(\ff \in C(G)^K\).}

The goal of the section is to determine the dual cone $Q^*$ in the space $M(G)^K$.  We write
$$
\langle \varphi, \mu \rangle_2 = \int_{G} \varphi \,\mbox{d}\mu
$$
for the action on $\varphi \in C(G)^K$ of $\mu \in M(G)^K$. 
 
Let us first prove an easy extension of Urysohn's lemma to $K$-bi-invariant functions on a locally compact groups with a compact subgroup $K$.

\begin{lemma}[$K$-bi-invariant Urysohn's Lemma]\label{inv_urysohn} Let $G$ be a locally compact group and $K$ a compact subgroup of $G$. Then for any disjoint closed $K$-bi-invariant subsets $A$ and $B$ of $G$ there exists a continuous $K$-bi-invariant function $s:G \to [0,1]$ such that $s{|}_A\equiv 1$ and $s|_{B} \equiv 0$.
\end{lemma}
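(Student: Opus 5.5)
The plan is to start from the ordinary Urysohn lemma and then make the function $K$-bi-invariant by averaging over $K\times K$. First, $G$ is locally compact Hausdorff, and $A$, $B$ are disjoint closed sets. Hence there is a continuous $s_0:G\to[0,1]$ with $s_0|_A\equiv1$ and $s_0|_B\equiv0$.

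For two closed sets in a locally compact group this is not automatic, since such groups need not be normal. I would handle this in the way that suffices for the paper's application, where $G$ is compact and therefore normal, so the classical Urysohn lemma applies directly. In the general locally compact case, one can still use complete regularity of topological groups to separate a compact set from a closed set. Alternatively, one can use paracompactness of $\sigma$-compact open subgroups. I would remark that the intended use is for compact $G$.

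The main step is the averaging. Define
\[
s(g):=\int_K\int_K s_0(kgk')\,\textup{d}\lambda_K(k)\,\textup{d}\lambda_K(k').
\]
Then $s$ takes values in $[0,1]$ because $s_0$ does.

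Next, $s$ is $K$-bi-invariant by left and right invariance of $\lambda_K$.

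For the boundary values: if $g\in A$, then $kgk'\in A$ for all $k,k'\in K$ because $A$ is $K$-bi-invariant. Hence $s_0(kgk')=1$ and so $s(g)=1$. In the same way $s|_B\equiv0$.

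The remaining point, which I expect to be the only genuinely delicate one, is continuity of $s$. Fix $g_0$ and a compact neighbourhood $V$ of $g_0$. The map $(k,g,k')\mapsto s_0(kgk')$ is continuous on the compact set $K\times V\times K$. By a standard tube-lemma argument, or by uniform continuity on compacta, for every $\varepsilon>0$ there is a neighbourhood $W\subset V$ of $g_0$ such that $|s_0(kgk')-s_0(kg_0k')|<\varepsilon$ for all $k,k'\in K$ and all $g\in W$. Integrating this bound gives $|s(g)-s(g_0)|\le\varepsilon$ on $W$, so $s$ is continuous, which completes the argument.
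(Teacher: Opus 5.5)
Your argument follows the paper's route: take an ordinary Urysohn function $s_0$ and average it over $K\times K$, using the $K$-bi-invariance of $A$ and $B$ to keep the values $1$ and $0$.

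The one thing to correct is your claim that locally compact groups need not be normal. Every locally compact (Hausdorff) group is paracompact, hence normal, and this is exactly the fact the paper cites. So the classical Urysohn lemma applies to any two disjoint closed sets, and there is no reason to fall back to compact $G$. Of your two fallbacks, complete regularity would not suffice, since $A$ and $B$ are only closed, not compact. The paracompactness one is precisely the proof of normality. The subgroup $H$ generated by a compact symmetric neighbourhood of $e$ is open and $\sigma$-compact, hence Lindel\"of and regular, hence normal. Since $G$ is the disjoint union of the clopen cosets $gH$, a Urysohn function can be built coset by coset. With this fixed, your proof covers the full statement.

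Your continuity argument via compactness of $K\times K$ is correct, and it supplies a detail the paper leaves implicit here. The paper's continuity remark for $\varphi^K$ in Lemma~\ref{Ksymmetrisation} rests on uniform continuity on a compact $G$, which is not available for a general locally compact $G$.
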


\begin{proof}
Since every locally compact group is normal \cite[Chapter IV, Theorem 1]{husain}, by Urysohn's Lemma (e.g.~\cite[Theorem~7.2.5]{Richmond}), there is a continuous function $\ff: ~G \to~ [0,1]$ such that $\ff|_{A} \equiv 1$ and $\ff|_{B} \equiv 0$. Furthermore, since $A$ is $K$-bi-invariant, we have that for any $g \in A$, $KgK \subset A$. Hence, for any $g \in A$ we have
\begin{equation*}
    \ff^{K}(g)= \int_{K}\int_{K} \ff(kgk') \,\mbox{d}\lambda_{K}(k) \,\mbox{d}\lambda_{K}(k') = \int_{K}\int_{K} 1 \,\mbox{d}\lambda_{K}(k) \,\mbox{d}\lambda_{K}(k') = 1.
\end{equation*}
Hence \(\ff^K|_{A} \equiv 1\). Similarly, \(\ff^K|_{B} \equiv 0\).

Furthermore, since $|\ff(g)| \le 1$ for all $g \in G$, and $\lambda_{K}(K) = 1$, we have 
\begin{equation*}
    |\ff^{K}(g)| \le \int_{K}\int_{K} |\ff(kgk')| \,\mbox{d}\lambda_{K}(k) \,\mbox{d}\lambda_{K}(k') = 1,
\end{equation*}
so that $\ff^{K}(G) \subset  [0,1]$.
Put $s = \ff^{K}$.
\end{proof}

\begin{lemma} \label{lemma:K-polar}

Let $G$ be a compact group, $K$ a closed subgroup of $G$, and $A \subset G$ a $K$-bi-invariant Borel set. The dual cone of $M = \{ \ff \in C(G)^K~:~ \ff|_A \le 0\}$ in the space $M(G)^K$ is the cone
\begin{equation}\label{eq:K-polar}
M^* := \{ \mu \in M(G)^K :  \langle \ff, \mu \rangle_2 \ge 0  \ \forall \ff \in M\} 
= \{ \mu \in M(G)^K :  \mu|_{\overline{A}} \le 0 \text{ and } \mu|_{\overline{A}^c}\equiv 0 \}. 
\end{equation}

\end{lemma}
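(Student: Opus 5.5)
We prove the two inclusions separately. Denote the right-hand side of \eqref{eq:K-polar} by $N$. Since $M = \{\ff \in C(G)^K : \ff|_{\overline{A}} \le 0\}$, as observed above, we may replace $A$ by $\overline{A}$ throughout. The set $\overline{A}$ is closed and $K$-bi-invariant, because left and right translations by elements of $K$ are homeomorphisms that preserve $A$. Its complement $U := \overline{A}^c$ is therefore open and $K$-bi-invariant.

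\emph{Inclusion $N \subset M^*$.} Let $\mu \in N$ and $\ff \in M$. Then
$$
\langle \ff, \mu\rangle_2 = \int_{\overline{A}} \ff \,\mathrm{d}\mu + \int_{U}\ff\,\mathrm{d}\mu = \int_{\overline{A}} \ff \,\mathrm{d}\mu .
$$
The last integral is $\ge 0$ because $\ff \le 0$ on $\overline{A}$ and $\mu|_{\overline{A}} \le 0$. This direction is immediate.

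\emph{Inclusion $M^* \subset N$.} Let $\mu \in M^*$ and write $\mu = \mu^+ - \mu^-$ for its Jordan decomposition. The two claims reduce to showing that $\int \ff\,\mathrm{d}\mu = 0$ for all $\ff \in C(G)^K$ supported in $U$, and that $\int \ff \,\mathrm{d}\mu \ge 0$ for suitable nonpositive test functions concentrated near $\overline{A}$.

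\emph{Vanishing on $U$.} Any $\ff \in C(G)^K$ vanishing on $\overline{A}$ satisfies $\pm\ff \in M$, so $\langle \ff,\mu\rangle_2 = 0$. This does not immediately give $\mu|_U = 0$, since $\mu$ is only tested against $K$-bi-invariant functions. However, $\mu$ is $K$-bi-invariant, so for arbitrary $\psi \in C(G)$ we have $\int \psi\,\mathrm{d}\mu = \int \psi^K \,\mathrm{d}\mu$. This is the identity $\int \psi\, \mathrm{d}\mu = \int \psi\,\mathrm{d}\mu^K$ read the other way, using Fubini and the invariance $\mu(kAk') = \mu(A)$. If $\psi \in C(G)$ has compact support in $U$, then $\psi^K$ vanishes on $\overline{A}$: for $g\in\overline{A}$ we have $kgk' \in \overline{A}$, so $\psi(kgk') = 0$. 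Hence $\int\psi\,\mathrm{d}\mu = 0$ for all such $\psi$. By the Riesz representation theorem and regularity on the open set $U$, this gives $|\mu|(U) = 0$.

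\emph{Sign on $\overline{A}$.} It remains to show $\mu^+(\overline{A}) = 0$. Suppose instead that $\mu^+(\overline{A})>0$. By the Hahn decomposition there is a Borel set $P \subset \overline{A}$ with $\mu(P) = \mu^+(\overline{A})>0$ and $\mu^-(P) = 0$. The natural test function would be $\ff = -\psi^K$, where $\psi \in C(G)$, $0\le\psi\le 1$, approximates $\mathbbm{1}_P$ in $L^1(|\mu|)$; such $\psi$ exists by Lusin's theorem, or by the density of $C(G)$ in $L^1(|\mu|)$ followed by truncation. Then $\ff\le 0$ everywhere, so $\ff \in M$. Using $K$-invariance again,
$$
\langle \ff, \mu\rangle_2 = -\int \psi\,\mathrm{d}\mu \approx -\mu(P) < 0,
$$
which contradicts $\mu \in M^*$.

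Observe that this argument in fact gives $\mu \le 0$ globally. Indeed, $-\psi^K \in M$ for every $\psi \ge 0$, so $\mu(\psi) \le 0$ for all such $\psi$. Together with the vanishing of $\mu$ on $U$, this yields the description in \eqref{eq:K-polar}.

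The main obstacle is the passage from testing against $K$-bi-invariant functions to testing against arbitrary continuous functions. This is handled by the identity $\int\psi\,\mathrm{d}\mu = \int\psi^K\,\mathrm{d}\mu$ for $\mu\in M(G)^K$, together with the facts that averaging preserves nonnegativity and preserves vanishing on the $K$-bi-invariant set $\overline{A}$. Alternatively, Lemma~\ref{inv_urysohn} could supply the bi-invariant test functions directly, but the averaging trick is cleaner.
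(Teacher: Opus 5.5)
Your proof is correct, but its key technical device differs from the paper's.

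\textbf{The paper's route.} It stays entirely among $K$-bi-invariant objects. For a $K$-bi-invariant compact $F$ it chooses a $K$-bi-invariant open $U\supset F$ with $|\mu|(U\setminus F)<\ve$. It then builds $s\in C(G)^K$ by the $K$-bi-invariant Urysohn lemma (Lemma~\ref{inv_urysohn}). Using $\pm s\in M$, respectively $-s\in M$, it gets $|\mu(F)|<\ve$, respectively $\mu(F)<\ve$, and concludes by inner regularity.

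\textbf{Your route.} You use the identity $\int\psi\,\mathrm{d}\mu=\int\psi^K\,\mathrm{d}\mu$, valid for $\mu\in M(G)^K$ and arbitrary $\psi\in C(G)$. You combine it with two facts: averaging preserves nonnegativity, and it preserves vanishing on the bi-invariant closed set $\overline{A}$. This lets you test $\mu$ against all of $C_c(\overline{A}^c)$ and against every nonnegative $\psi\in C(G)$. The ordinary, non-invariant Riesz and regularity facts then finish the argument.

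\textbf{What your route buys.} The paper tacitly needs two regularity facts that it does not justify:
\begin{itemize}
\item that inner regularity may be checked on $K$-bi-invariant compact sets only;
\item that the outer approximations $U$ can be chosen $K$-bi-invariant.
\end{itemize}
Both are true, but justifying them essentially requires passing to the double coset space $K\backslash G/K$, or using exactly your averaging identity. Your argument needs neither.

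\textbf{A simplification.} Your remark that $-\psi^K\in M$ for every $\psi\ge 0$ shows that every $\mu\in M^*$ is a nonpositive measure on all of $G$. The paper's second step yields the same, since it never uses $F\subset A$. Given this, your Hahn-set and $L^1$-approximation contradiction argument is redundant. You can simply note that a regular measure that is $\le 0$ on all nonnegative continuous functions is a nonpositive measure.
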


\begin{proof}
For one direction, note that if $\mu|_{\overline{A}^c}\equiv 0$ and $\mu|_{\overline{A}} \le 0$, then for any $\ff \in M$ we find $\langle \varphi, \mu \rangle_2 = \int_G \ff \,\mbox{d}\mu  = \int_{\overline{A}} \ff \,\mbox{d}\mu \ge 0$, since $\ff|_{\overline{A}}\le 0$. So, $M^*$ contains the right-hand side of~\eqref{eq:K-polar}. To prove the converse containment, we have to prove two claims for an arbitrary $\mu \in M^{*}$.

First, consider the assertion that $\mu|_{\overline{A}^c} \equiv 0$. Here we refer to inner regularity of the measure $\mu$: it suffices to show that for any $K$-bi-invariant compact set $F \Subset \overline{A}^c$, $\mu(F)=0$. As $\overline{A}^c$ is $K$-bi-invariant and open, there are open $K$-bi-invariant sets  $U$ inscribed between $F$ and $\overline{A}^c$, i.e., $F \Subset U \subset \overline{A}^c$. 
Referring to outer regularity of $\mu$, it is possible to choose a $K$-bi-invariant $U$ with $|\mu|(U\setminus F)<\ve$, where $\ve > 0$ is an arbitrary positive number. Note that $U^c$ is $K$-bi-invariant and disjoint from the $K$-bi-invariant~$F$. Therefore, by Lemma \ref{inv_urysohn}, there exists a $K$-bi-invariant $s \in C(G)$, $s : G \to [0,1]$ such that $s|_F\equiv 1$ and $s|_{U^c}\equiv 0$, in particular, $s|_{\overline{A}} \equiv 0$. As a result, $s \in M$ and $-s \in M$, and therefore $\langle s, \mu \rangle_2 \ge 0$ and $\langle -s, \mu \rangle_2 \ge 0$ which implies $\langle s, \mu \rangle_2 = 0$. Consider now $0 = \langle s, \mu \rangle_2 = \int_U s \,\mbox{d}\mu = \int_F s \,\mbox{d}\mu + \int_{U\setminus F} s \,\mbox{d}\mu$. As $s|_F \equiv 1$, the first term is just $\mu(F)$. The second term can be estimated as $\|s\|_\infty |\mu|(U\setminus F)$.  We therefore obtain $|\mu(F)| \le |\mu|(U\setminus F) <\ve$, and as $\ve$ was arbitrarily small, we are led to $\mu(F) = 0$. So, by inner regularity, also $\mu|_{\overline{A}^c} \equiv 0$.

Second, we claim that $\mu|_{\overline{A}} \le 0$. Again, we take an arbitrary $\ve > 0$ and any pair of $K$-bi-invariant sets, $F \Subset A$ compact and $U\supset F$ open, with $|\mu|(U\setminus F) < \ve$. Here we cannot guarantee $U \subset A$, but as we already know $\mu|_{\overline{A}^c}\equiv 0$, this does not bother us. Again, we take a $K$-bi-invariant function $s \in C(G)$, $s : G \to [0,1]$ such that $s|_F\equiv 1$ and $s|_{U^c}\equiv 0$. From $s \ge  0$ we see that $-s \in M$. Therefore, $0 \le \langle -s, \mu \rangle_2 = \int_U (-s) \,\mbox{d}\mu = \int_F (-s) \,\mbox{d}\mu + \int_{U\setminus F} (-s) \,\mbox{d}\mu$, and the second term is at most $\|s\|_\infty |\mu|(U\setminus F) < \ve$ again. As the first expression is just $-\mu(F)$, we find $\mu(F) < \ve$. Since $\ve >0$ is arbitrary, we conclude that $\mu(F)\le 0$, and by inner regularity we obtain $\mu|_A \le 0$.
\end{proof}

Similarly, 
$$
L^* = \{\mu \in M(G)^K :  \mu|_{\overline{B}} \ge 0 \text{ and } \mu|_{\overline{B}^c}\equiv 0 \}.
$$
Recall that $A^c = \Om_{+}$ and $B^c = \Om_{-}$, so that the measures in $M^*$ live only in $\overline{\Omega_+^c}$ and the ones in $L^*$ live only in $\overline{\Omega_-^c}$, respectively. Note that if $K=\{0\}$, then in the compact (Abelian) group case we get back $M^*=-M^*(\Omega_{+})$ and $L^*=M^*(\Omega_{-})$ appearing in the formulation of Theorem \ref{theorem:main_group}.

Now we are in a position to describe $Q^* := (M \cap L)^*$, for which we shall need the following lemma.

\begin{lemma} \label{lemma:Jordan}
Let \(G\) be a compact group and \(K\) a closed subgroup. Any \(\mu \in M(G)^K \) has Jordan decomposition \(\mu = \mu_ + - \mu_ -\) with \(\mu _ +, \mu _ - \in M(G)^K\).
\end{lemma}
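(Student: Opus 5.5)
The plan is to take the ordinary Jordan decomposition of $\mu$ in $M(G)$ and show, via uniqueness, that its parts are automatically $K$-bi-invariant. Let $\mu = \nu_+ - \nu_-$ be the Jordan decomposition of $\mu$ in $M(G)$, so that $\nu_\pm \ge 0$ are mutually singular regular Borel measures. There is a Hahn decomposition $G = P \cup N$ with $\nu_+(N)=0$ and $\nu_-(P)=0$. The key fact I will use is uniqueness: if $\mu = \rho_1 - \rho_2$ with $\rho_1,\rho_2\ge 0$ mutually singular, then $\rho_1=\nu_+$ and $\rho_2=\nu_-$. Equivalently, $\nu_+(E)=\sup\{\mu(F): F\subset E \text{ Borel}\}$.

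Fix $k,k'\in K$ and define the translated measure $\nu_+^{k,k'}(E):=\nu_+(kEk')$, and similarly $\nu_-^{k,k'}$. These are non-negative Borel measures. They are regular, because $E\mapsto kEk'$ is a homeomorphism of $G$, so it preserves compact and open sets. By the $K$-bi-invariance of $\mu$,
\[
\nu_+^{k,k'}(E)-\nu_-^{k,k'}(E)=\mu(kEk')=\mu(E).
\]
The translated measures are still mutually singular: they are concentrated on $k^{-1}Pk'^{-1}$ and $k^{-1}Nk'^{-1}$ respectively, and these sets partition $G$. Uniqueness of the Jordan decomposition then gives $\nu_\pm^{k,k'}=\nu_\pm$, that is, $\nu_\pm(kEk')=\nu_\pm(E)$ for all $k,k'\in K$ and all Borel sets $E$. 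Hence $\nu_\pm\in M(G)^K$, and we set $\mu_\pm:=\nu_\pm$.

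Alternatively, and more directly, the sup formula gives the result at once. Since $F\mapsto kFk'$ is a bijection between the Borel subsets of $E$ and those of $kEk'$,
\[
\nu_+(kEk')=\sup_{F'\subset kEk'}\mu(F')=\sup_{F\subset E}\mu(kFk')=\sup_{F\subset E}\mu(F)=\nu_+(E),
\]
and likewise for $\nu_-=\nu_+-\mu$.

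I do not expect a real obstacle. The only point needing care is that the Hahn set $P$ itself need not be $K$-bi-invariant, so one must not try to periodise $P$. Arguing through uniqueness, or through the sup formula, avoids this issue entirely, and regularity of the translated measures is immediate because the translations are homeomorphisms.
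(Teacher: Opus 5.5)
Your proof is correct and follows essentially the same route as the paper. The paper notes that $(kPk',kNk')$ is again a Hahn decomposition of $\mu$ and uses its essential uniqueness to get $\mu_+(kEk')=\mu(kEk'\cap kPk')=\mu(E\cap P)=\mu_+(E)$; you apply uniqueness to the Jordan decomposition instead, or use the formula $\nu_+(E)=\sup_{F\subset E}\mu(F)$ directly, and in both versions the key input is that $g\mapsto kgk'$ is a homeomorphism preserving $\mu$.
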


\begin{proof}
    Following \cite[Theorem 4.1.5, Corollary 4.1.6]{{cohn-measure-theory}}, let \((P,N)\) be a Hahn decomposition for \(\mu\) so that \(\mu_{+}(E) = \mu(E \cap P)\) and \(\mu_{-}(E) = -\mu(E \cap N)\) for all measurable subsets \(E \subset G\). Let \(k\) and \(k'\) be arbitrary elements of \(K\). From the fact that \(\mu\) is \(K\)-bi-invariant and that \(G \ni g \mapsto kgk' \in G\) is a homeomorphism, it is clear that \((kPk',kNk')\) is also a Hahn decomposition for \(\mu\). Therefore, \(\mu(E \cap P) = \mu(E\cap kPk')\) and \(\mu(E \cap N) = \mu(E\cap kNk')\)  for all measurable subsets \(E \subset G\). Now
    \begin{equation*}
        \mu_{+}(kEk') = \mu((kEk')\cap P) = \mu((kEk')\cap (kPk')) = \mu(k(E\cap P)k') = \mu(E \cap P) = \mu_{+}(E)
    \end{equation*}
for all measurable subsets \(E \subset G\). Hence \(\mu_{+}\) is \(K\)-bi-invariant. Similarly, \(\mu_{-}\) is \(K\)-bi-invariant.
\end{proof}

\begin{theorem} \label{theorem:Q-polar}

Let $G$ be a compact group, $K$ a closed subgroup of $G$, and $A,B \subset G$ $K$-bi-invariant Borel sets. The dual cone $Q^*$ of $Q = M \cap L$, where $M$ and $L$ are defined in~\eqref{eq:K_and_L} is the cone $M^* + L^*$.

\end{theorem}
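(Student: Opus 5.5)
The inclusion $M^*+L^*\subset Q^*$ is immediate. If $\mu_1\in M^*$ and $\mu_2\in L^*$, then for every $\ff\in Q=M\cap L$ we have $\langle\ff,\mu_1+\mu_2\rangle_2=\langle\ff,\mu_1\rangle_2+\langle\ff,\mu_2\rangle_2\ge0$. The work is in the converse. My plan is to determine the support and sign structure of an arbitrary $\mu\in Q^*$ directly, in the same spirit as Lemma~\ref{lemma:K-polar}, and then split $\mu$ by hand, using the Jordan decomposition of Lemma~\ref{lemma:Jordan} on the overlap $\overline A\cap\overline B$.

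Fix $\mu\in Q^*\subset M(G)^K$. I will prove three localisation claims, each by the regularity/Urysohn argument from the proof of Lemma~\ref{lemma:K-polar}. In each case I take a compact $F$ inside the relevant $K$-bi-invariant set. I may assume $F$ is $K$-bi-invariant, replacing it by $KFK$, which is compact and stays in the set. I also take a $K$-bi-invariant open $U\supset F$ inside the appropriate open set with $|\mu|(U\setminus F)<\ve$. Lemma~\ref{inv_urysohn} then gives a $K$-bi-invariant $s:G\to[0,1]$ with $s|_F\equiv1$ and $s|_{U^c}\equiv0$.

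\emph{Claim 1: $\mu$ vanishes on $(\overline A\cup\overline B)^c$.} Take $U\subset(\overline A\cup\overline B)^c$. Then $s$ vanishes on $\overline A\cup\overline B$, so both $s\in Q$ and $-s\in Q$, hence $\langle s,\mu\rangle_2=0$. This gives $|\mu(F)|<\ve$, and inner regularity finishes the claim.

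\emph{Claim 2: $\mu\le0$ on $\overline A\setminus\overline B$.} Take $U\subset\overline B^{\,c}$. Then $-s\le0$ everywhere, in particular on $A$, so $-s\in M$. Also $-s=0$ on $\overline B$, so $-s\in L$. Hence $-s\in Q$ and $\langle -s,\mu\rangle_2\ge0$. This gives $\mu(F)<\ve$ up to the error term, and therefore $\mu(F)\le0$.

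\emph{Claim 3: $\mu\ge0$ on $\overline B\setminus\overline A$.} This is symmetric to Claim 2: take $U\subset\overline A^{\,c}$ and test against $s\in Q$.

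On $\overline A\cap\overline B$ there is no sign information, since every $\ff\in Q$ vanishes there. This is exactly why this set has to be split using the Jordan decomposition.

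Now write $\mu=\mu_+-\mu_-$ with $\mu_\pm\in M(G)^K$ by Lemma~\ref{lemma:Jordan}, and set $D:=\overline A\cap\overline B$. Define
\[
\nu_1:=\mu|_{\overline A\setminus\overline B}-\mu_-|_{D},\qquad \nu_2:=\mu|_{\overline B\setminus\overline A}+\mu_+|_{D}.
\]
By Claim 1, $\mu=\nu_1+\nu_2$. By Claim 2, $\nu_1\le0$ and is concentrated on $\overline A$. By Claim 3, $\nu_2\ge0$ and is concentrated on $\overline B$.

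It remains to check that $\nu_1$ and $\nu_2$ are $K$-bi-invariant. The sets $\overline A$ and $\overline B$ are $K$-bi-invariant, because $g\mapsto kgk'$ is a homeomorphism preserving $A$ and $B$. Restrictions of $K$-bi-invariant measures to $K$-bi-invariant Borel sets are again $K$-bi-invariant. So $\nu_1\in M^*$ and $\nu_2\in L^*$ by Lemma~\ref{lemma:K-polar} and its analogue for $L$, and hence $\mu\in M^*+L^*$.

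The main obstacle, I expect, is the bookkeeping in Claims 2 and 3. The relevant sets $\overline A\setminus\overline B$ are not open, so care is needed in choosing $U$ inside $\overline B^{\,c}$ rather than inside the set itself. One also has to make sure the test function $\mp s$ really lies in both $M$ and $L$. The rest is routine once these localisations are in place.
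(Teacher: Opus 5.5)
Your proof is correct and is essentially the paper's argument: you use the same $K$-bi-invariant Urysohn test functions $\pm s$ vanishing on $\overline A$ or $\overline B$ (hence lying in $Q$), and the same regularity estimate via $|\mu|(U\setminus F)<\ve$. Your three claims repackage the paper's two claims $\mu|_{\overline A^c}\ge 0$ and $\mu|_{\overline B^c}\le 0$, and your measures $\nu_1,\nu_2$ coincide with $-\mu_-$ and $\mu_+$, which is exactly the splitting $\mu=\mu_+-\mu_-$ used in the paper.
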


\begin{proof}
It is obvious that $M^* + L^* \subset Q^*$. We will show the converse containment.

Let $\mu \in Q^*$, and let us take its Jordan decomposition $\mu = \mu_{+} - \mu_{-}$ with $K$-bi-invariant $\mu_{+}$ and $\mu_{-}$ that exists by Lemma~\ref{lemma:Jordan}. We will show that $-\mu_{-} \in M^{*}$, and then similarly $\mu_{+} \in L^*$, furnishing the required representation of the general $\mu \in Q^*$. 

So we are going to prove that $\supp \mu_{-} \subset \overline{A}$, in other words, $\mu_{-}|_{\overline{A}^c} \equiv 0$, or, still equivalently, $\mu|_{\overline{A}^c} \ge 0$. A similar argument will imply that $\supp \mu_{+} \subset \overline{B}$, or, equivalently, $\mu|_{\overline{B}^c} \le 0$. Let us see that these properties entail the desired property that $-\mu_{-} \in M^{*}$, and $\mu_{+} \in L^{*}$. Indeed, if $\ff \in M$, then we have $\langle \ff, -\mu_{-} \rangle_2 = - \int_{\overline{A}} \ff \,\mbox{d}\mu_{-} \ge 0$, for $\ff|_{\overline{A}} \le 0$ and $\mu_{-} \ge 0$. This means that $-\mu_{-} \in M^{*}$, and similarly $\mu_{+} \in L^{*}$. Thus, we would obtain $Q^* \subset M^* + L^*$, and finally $Q^* = M^* + L^*$.

So let now prove that $\mu|_{\overline{A}^c} \ge 0$. Let $F \Subset \overline{A}^c$ be an arbitrary $K$-bi-invariant subset. As $\overline{A}^c$ is open and $\mu$ is regular, to any $\ve>0$ there exists an open $K$-bi-invariant subset  $U$  such that $F \Subset U \subset \overline{A}^c$ and $|\mu|(U\setminus F) < \ve$. Using again Lemma \ref{inv_urysohn}, we can get a function $s \in C(G)^{K}$, $s : G \to [0,1]$ with $s|_F \equiv 1$, $s|_{U^c} \equiv 0$. This function is a non-negative continuous $K$-bi-invariant function, so without doubt $s \in L$. But also $s|_{\overline{A}} \equiv 0$ as $U^c \supset \overline{A}$, so $s|_{\overline{A}}\le 0$ and thus $s \in M$, too. In all, $s \in Q = M\cap L$.

Given that $\mu \in Q^*$, we must have $0 \le \langle s, \mu \rangle_2 = \int_F s \,\mbox{d}\mu + \int_{U\setminus F} s \,\mbox{d}\mu = \mu(F) + \int_{U\setminus F} s \,\mbox{d}\mu$. Again, the second term can be estimated as $| \int_{U\setminus F} s \,\mbox{d} \mu| \le  \|s\|_\infty |\mu|(U\setminus F) < \ve$, so that we are led to $- \ve < \mu(F)$, and as this holds for all $\ve > 0$, we obtain $\mu(F)\ge 0$. That is, by inner regularity, all measurable subsets of the open subset $\overline{A}^c$ will have non-negative measure, and therefore $\mu|_{\overline{A}^c} \ge 0$, as wanted.
\end{proof}

%%%%%%%%%%%%%%%%%%%%%%%%%%%%%%%%%%%%%%%%%%%%%%%%%%%%%%%%%%
%%%%%%%%%%%%%%%%%%%%%%%%%%%%%%%%%%%%%%%%%%%%%%%%%%%%%%%%%%

\section{Arestov-Babenko Scheme on Compact Gelfand Pairs} \label{section:A-B_scheme}

As in Section~\ref{section:duality_Duffin}, we will work with two pairs of spaces in duality. For the first pair, we take $E = \ell^\infty(\Gamma)$.  It is known that  $\ell^\infty(\Gamma)$ is the topological dual of  $\ell^1(\Gamma)$ with the norm topology, if $\Gamma$ is discrete (e.g. \cite[Theorem  4.2.1]{edwards}). Keeping this fact in mind, we take $E = \ell^\infty(\Gamma)$ with the weak-$\ast$ topology $\sigma(\ell^\infty(\Gamma), \ell^1(\Gamma))$. This makes $E$ a space with a locally convex topology.  Moreover, the topological dual of $\ell^\infty(\Gamma)$ with the weak-$\ast$ topology $\sigma(\ell^\infty(\Gamma),   \ell^1(\Gamma))$ is $E^* = \ell^1(\Gamma)$ (e.g. \cite[Proposition 3.14]{Brezis}). In other words, with this choice, the spaces $E$ and $E^*$ satisfy the assumptions of Section~\ref{section:duality_Duffin}. The theory in Section~\ref{section:duality_Duffin} does not require a topology on $E^*$.

Note that for a function $f : \Gamma \to \mathbb{R}$, the condition $f \in \ell^1(\Gamma)$ implies that $\supp{f}$ is at most countable, and $\sum_{\gamma \in \supp{f}} |f(\gamma)| < \infty$.
The corresponding bilinear form is
$$
\langle g,f \rangle_1 = \sum_{\gamma \in \supp{f}} f(\gamma) g(\gamma)
$$
for  $f \in \ell^1(\Gamma)$ and $g \in \ell^\infty(\Gamma)$.  

For the second pair we take $F = C(G)^{K}$ with the norm topology and $F^*~= (C(G)^{K})^* = M(G)^{K}$, the space of regular signed $K$-bi-invariant Borel measures on $G$, with the bilinear form
$$
\langle \varphi, \mu \rangle_2 = \int_{G} \varphi \, \mbox{d}\mu
$$
for  $\varphi \in C(G)^{K}$ and $\mu \in M(G)^K$. Also in this case, the assumptions of Section~\ref{section:duality_Duffin} are satisfied.

Let $\Omega_+$, $\Omega_-$ be symmetric $K$-bi-invariant Borel subsets of $G$ with $e \in \intt{\Omega_+}$. We denote their complements by $A = \Omega_+^c$, $B = \Omega_-^c$.

As the positive cone in the space $E = \ell^\infty(\Gamma)$ we take the closed convex cone $P = \ell^\infty_+(\Gamma)$ of non-negative functions in $\ell^\infty(\Gamma)$. Its dual cone in the space $E^* = \ell^1(\Gamma)$ is the convex cone  $P^* = \ell^1_+(\Gamma)$ of non-negative elements of $\ell^1(\Gamma)$. In the space $F = C(G)^K$ we take the positive cone to be the closed convex cone $Q  = M \cap L$, where $M$ and $L$ were defined in~\eqref{eq:K_and_L}. Its dual cone $Q^*$ in $M(G)^K$ is described in Theorem~\ref{theorem:Q-polar}.

We consider the function class 
$$
\widetilde{\mathcal{F}}^K_G(\Omega_+, \Omega_-) := \{ \varphi \in C(G)^K : \varphi  \gg 0, \ \varphi \not\equiv 0, \ \varphi|_{\Omega_+^c} \le 0, \  \varphi|_{\Omega_-^c} \ge 0 \}.
$$
Note that the class $\mathcal{F}^K_G(\Omega_+, \Omega_-)$ introduced in Section~\ref{section:notation} is a subclass of functions in $\widetilde{\mathcal{F}}^K_G(\Omega_+, \Omega_-)$ with $\ff(e) = 1$.

If $f \in \ell^1_+(\Gamma)$, then it follows from $|\gamma(g)| \le 1$ for all \(g \in G\) and the compactness of~$G$ that the series $\sum_{\gamma \in \supp{f}} f(\gamma)  \gamma$ converges absolutely and uniformly on $G$, and therefore defines a continuous $K$-bi-invariant function $\ff$ on $G$, with $\varphi(e) = \sum_{\gamma \in \supp{f}} f(\gamma)$. By Theorem~\ref{expansion} we have that $\ff$ is positive definite and $f(\gamma) = \delta(\gamma) \widehat{\ff}(\gamma)$, $\gamma \in \Gamma$. We conclude that functions in the class $\widetilde{\mathcal{F}}^K_G(\Omega_+, \Omega_-)$ are exactly functions of the form
$$
\varphi(g) = \sum_{\gamma \in \supp{f}} f(\gamma) \gamma(g), \quad g \in G, \quad \text{with} \quad f \in \ell^1_+(\Gamma) \setminus \{\mathbf{0}\}
$$
such that $\varphi|_A \le 0$, $\varphi|_B \ge 0$.

 We fix a positive definite measure $\sigma \in M(G)^K$. We will put on $\sigma$ an additional condition given below.

\begin{definition} \label{def:wiener}

We say that a measure $\sigma \in M(G)^K$ satisfies \emph{Wiener's condition} if $\widehat{\sigma}(\gamma) \ne 0$ for all $\gamma \in \Gamma$.

\end{definition}

We denote $s := \widehat{\sigma} \in \ell^\infty(\Gamma)$. Let $\sigma \in M(G)^K$ be a positive definite measure that satisfies Wiener's condition. Let $\ff \not\equiv 0$ be a continuous positive definite $K$-bi-invariant function. Then $f = \delta \widehat{\ff} \in \ell^1_+(\Gamma) \setminus \{ \mathbf{0} \}$. We see that
\begin{equation} \label{eq:sigma-form-positive}
\langle \varphi, \sigma \rangle_2 = \sum_{\gamma \in \supp{f}} f(\gamma) s(\gamma) > 0.
\end{equation}
Recall that $\delta(\mathbbm{1}_G) = 1$.

Instead of the extremal constant~\eqref{eq:Delsarte_type_problem} introduced in Section~\ref{section:notation}, we will consider a more general problem
\begin{equation}\label{eq:C_with_F_tilde}
\mathcal{A}^{K,\sigma}_G(\Omega_+, \Omega_{-}) :=  \sup \left\{ \frac{ \int_G \ff \,\mbox{d} \lambda_G}{ \langle \ff, \sigma \rangle_2 }  : \varphi \in \widetilde{\mathcal{F}}^K_G(\Omega_+, \Omega_-) \right\}. 
\end{equation}
The linear version of problem~\eqref{eq:C_with_F_tilde} is
$$
\mathcal{A}^{K,\sigma}_G(\Omega_+, \Omega_{-}) = \sup \left\{\int_G \varphi \, \mbox{d}\lambda_G : \varphi \in \mathcal{F}^{K,\sigma}_G(\Omega_+, \Omega_-)  \right\},
$$
where
$$
\mathcal{F}^{K,\sigma}_G(\Omega_+, \Omega_-) := \{ \varphi \in C(G)^K : \varphi  \gg 0, \ \langle \ff, \sigma \rangle_2 = 1, \  \varphi|_{\Omega_+^c} \le 0, \  \varphi|_{\Omega_-^c} \ge 0 \}.
$$
Extremal problem~\eqref{eq:Delsarte_type_problem} is a particular case of the above problem. Recall that the normalisation in~\eqref{eq:Delsarte_type_problem} was $\ff(e) = 1$. It can be realized by the condition $\langle \ff, \delta_e^K \rangle_2 = 1$, where $\delta_e^K \in M(G)^K$ is the $K$-periodisation of the Dirac measure $\delta_e$:
$$
\delta_e^K(A) := \int_K \int_K \delta_e(kAk') \, \mbox{d} \lambda_K(k) \, \mbox{d} \lambda_K(k')
$$
for a Borel set $A$. If $\ff$ is $K$-bi-invariant, then 
$$
\langle \ff, \delta_e^K \rangle_2 = \int_K \int_K \ff(kek') \, \mbox{d} \lambda_K(k) \, \mbox{d} \lambda_K(k') = \ff(e).
$$
The Fourier transform of $\delta_e^K$ is $\widehat{\delta_e^K} = \mathbbm{1}_\Gamma$.

We have discussed in Section~\ref{section:notation} that the condition $e \in \intt{\Omega_+}$ is equivalent to the fact that the class $\widetilde{\mathcal{F}}^K_G(\Omega_+, \Omega_-)$ is non-empty. Moreover, the construction described in Section~\ref{section:notation} gives a function in the class $\widetilde{\mathcal{F}}^K_G(\Omega_+, \Omega_-)$ which is non-negative and not identically zero, and therefore has a strictly positive integral. This implies that $\mathcal{A}^{K,\sigma}_G(\Omega_+, \Omega_{-}) > 0$. It follows, in particular, that in the supremum in~\eqref{eq:C_with_F_tilde} we may only consider functions $\ff$ with $f(\mathbbm{1}_{G}) = \int_G \ff \,\mbox{d} \lambda_G > 0$.

If $\varphi \in C(G)^K$, $\ff \gg 0$, $\ff \not\equiv 0$, then by~\eqref{eq:sigma-form-positive}
$\langle \varphi, \sigma \rangle_2 \ge f(\mathbbm{1}_G) s(\mathbbm{1}_G) = \int_G \ff \,\mbox{d}\lambda_G \cdot \widehat{\sigma}(\mathbbm{1}_G) > 0$,
and
$$
0 < \frac{ \int_G \ff \,\mbox{d} \lambda_G}{ \langle \ff, \sigma \rangle_2 } \le \frac{1}{\widehat{\sigma}(\mathbbm{1}_G)}.
$$
Therefore,
$$
0 < \mathcal{A}^{K,\sigma}_G(\Omega_+, \Omega_{-}) \le \frac{1}{\widehat{\sigma}(\mathbbm{1}_G)}.
$$

The extremal problem~\eqref{eq:C_with_F_tilde} can be reformulated in terms of the Fourier transforms $f = \delta \widehat{\varphi}$ and $s = \widehat{\sigma}$ as follows:
\begin{align*}
\mathcal{A}^{K,\sigma}_{G}(\Omega_+, \Omega_{-}) = \sup & \left\{ \frac{ f(\mathbbm{1}_{G}) }{ f(\mathbbm{1}_{G}) s(\mathbbm{1}_{G}) + \sum_{\gamma \in \supp{f} \setminus \{\mathbbm{1}_G\}} f(\gamma) s(\gamma)}  :  \  \widecheck{f} \in \widetilde{\mathcal{F}}^K_G(\Omega_+, \Omega_-)  \right\} \\[1mm]
= \sup & \left\{ \frac{ 1 }{s(\mathbbm{1}_G) + \sum_{\gamma \in \supp{f} \setminus \{\mathbbm{1}_G\}} f(\gamma) s(\gamma)}  : \widecheck{f}\ \in \widetilde{\mathcal{F}}^K_G(\Omega_+, \Omega_-), \ f(\mathbbm{1}_{G})= 1  \right\}.
\end{align*}
Instead of this problem, we shall consider the equivalent problem to find
\begin{align*}
u_{\Gamma}^{\sigma}(\Omega_+, \Omega_-)  := \inf & \left\{ \sum_{\gamma \in  \supp{f} \setminus \{\mathbbm{1}_G\}} f(\gamma) s(\gamma) : \right.   f \in \ell^1_+(\Gamma), \
\varphi|_{\Omega_+^c} \le 0, \ \varphi|_{\Omega_-^c} \ge 0, \\
& \text{where }\left. \varphi = \mathbbm{1}_G + \sum_{\gamma \in \supp{f} \setminus \{\mathbbm{1}_G\}} f(\gamma) \gamma
\right\}.
\end{align*}
The two problems are connected by the equation
\begin{equation} \label{eq:C-u}
\mathcal{A}^{K,\sigma}_G(\Omega_+, \Omega_{-}) = \frac{1}{\widehat{\sigma}(\mathbbm{1}_{G}) + u_\Gamma^\sigma(\Omega_+, \Omega_-)}.
\end{equation}

We consider the operator $T : \ell^1(\Gamma) \to C(G)^K$,
$$
T f := \sum_{\gamma \in \supp{f} \setminus \{\mathbbm{1}_G\}}  f(\gamma) \gamma, \quad f \in \ell^1(\Gamma).
$$
It is a linear operator from $\ell^1(\Gamma)$ to $C(G)^K$. The problem $u_\Gamma^{\sigma}(\Omega_+, \Omega_{-})$ can be rewritten as 
$$
u_\Gamma^{\sigma}(\Omega_+, \Omega_-) = \inf \left\{ \sum_{\gamma \in  \supp{f} \setminus \{\mathbbm{1}_G\}} f(\gamma) s(\gamma) : f \in \ell^1_+(\Gamma),  \ Tf + \mathbbm{1}_G \in Q  \right\}.
$$

The adjoint operator of $T$ (in the sense explained in Section~\ref{section:duality_Duffin}) is the operator $T^* : M(G)^K \to \ell^\infty(\Gamma)$,
$$
(T^* \mu)(\gamma) :=
\begin{cases}
\int_{G} \gamma \, \mbox{d}\mu , & \gamma \ne \mathbbm{1}_G, \\
0, & \gamma = \mathbbm{1}_G.
\end{cases}
$$
Indeed, for $f \in \ell^1(\Gamma)$ and $\mu \in M(G)^{K}$ we have
\begin{align*}
\langle Tf, \mu \rangle_2
& = \int_{G} \sum_{\gamma \in  \supp{f} \setminus \{\mathbbm{1}_G\}} f(\gamma) \gamma(g) \, \mbox{d}\mu(g) \\
& = \sum_{\gamma \in  \supp{f} \setminus \{\mathbbm{1}_G\}} f(\gamma) \int_{G} \gamma(g) \, \mbox{d}\mu(g)
= \sum_{\gamma \in  \supp{f} \setminus \{\mathbbm{1}_G\}} f(\gamma)  (T^* \mu)(\gamma)
= \langle T^* \mu, f \rangle_1.
\end{align*}
With the notation $b \in C(G)^K$ being the constant function $b := -\mathbbm{1}_G$ and $c \in \ell^\infty(\Gamma)$ being
$$
c(\gamma) :=
\begin{cases}
s(\gamma), & \gamma \ne \mathbbm{1}_G, \\
0, & \gamma = \mathbbm{1}_G, 
\end{cases}
$$
we arrive at the primal problem 
$$
u_\Gamma^{\sigma}(\Omega_+, \Omega_-) = \inf \{ \langle c, f \rangle_1 : f \ge_{\ell^1_+(\Gamma)} 0, \ Tf \ge_{Q} b \}
$$
which is exactly of the form \eqref{eq:LP-prim-problem}.
The dual problem \eqref{eq:LP-dual-problem} is
$$
v_\Gamma^{\sigma}(\Omega_+, \Omega_-) = \sup \{ \langle b, \mu \rangle_2 : \mu \ge_{Q^*} 0, \ T^* \mu \le_{\ell^\infty_+(\Gamma)} c\}.
$$
Here
$$
\langle b, \mu \rangle_2 = \langle -\mathbbm{1}_{G}, \mu \rangle_2 = -\int_{G} \mbox{d}\mu = - \mu(G).
$$
The condition $T^* \mu \le_{\ell^\infty_+(\Gamma)} c$ means that for $\gamma \ne \mathbbm{1}_{G}$ we have $\int_G \gamma \,\mbox{d}\mu \le s(\gamma)$. Thus, the dual problem can be written as
$$
v_\Gamma^{\sigma}(\Omega_+, \Omega_-) = \sup \left\{ -\mu(G) : \mu \in Q^*, \ \int_G \gamma \,\mbox{d}\mu \le s(\gamma) \text{ for all } \gamma \ne \mathbbm{1}_G\right\}.
$$

Our next aim is to establish the strong duality relation $u_\Gamma^{\sigma}(\Omega_+, \Omega_-) = v_\Gamma^{\sigma}(\Omega_+, \Omega_-)$ using Theorem~\ref{theorem:Duffin-duality} and Corollary~\ref{corollary:Duffin-strong-duality}. We will even show that the primal problem
$u_\Gamma^{\sigma}(\Omega_+, \Omega_-)$ is consistent, has a finite value and is well-posed. 

We start with the first two claims. We have discussed above that the class \linebreak $\widetilde{\mathcal{F}}^K_G(\Omega_+, \Omega_-)$ is non-empty; moreover, it is sufficient to consider only functions with $f(\mathbbm{1}_{G}) = \int_G ~ \ff \,\mbox{d} \lambda_G = ~1$. Fourier transforms $f = \delta \widehat{\varphi}$ of such functions are feasible in the problem $u_\Gamma^{\sigma}(\Omega_+, \Omega_-)$. Thus, the problem $u_\Gamma^{\sigma}(\Omega_+, \Omega_-)$ is consistent. Since it is a minimization problem of a non-negative quantity, we conclude that it has a finite value.

To show that the problem $u_\Gamma^{\sigma}(\Omega_+, \Omega_-)$ is well-posed, we consider the extended problems 
$$
u_\Gamma^{\sigma}(\Omega_+, \Omega_-;\varepsilon) := \inf \left\{ \sum_{\gamma \in \supp{f} \setminus \{ \mathbbm{1}_G \}} f(\gamma) s(\gamma) : f \in \ell^1_+(\Gamma), \
Tf + \mathbbm{1}_G \in Q + G(\varepsilon)   \right\}
$$
with $0 < \varepsilon < 1$ and $G(\varepsilon) := \{ g \in C(G)^K : \|g\|_\infty \le \varepsilon \}$. 
We use here the fact that the sets $G(\varepsilon)$ build a basis of neighbourhoods of zero in the topology of $C(G)^K$. If $(f_n)$ is an asymptotically feasible sequence, then $(f_n) \in \ell^1_+(\Gamma)$ and
$$
Tf_n + \mathbbm{1}_G = q_n + z_n, \quad \text{where} \quad q_n \in Q, \ \|z_n\|_{\infty} \to 0,
$$
i.e., $Tf + \mathbbm{1}_G \in Q + G(\varepsilon)$ for large enough $n$.

We will show that
\begin{equation} \label{eq:C_correctly_posed}
\lim_{\varepsilon \to 0+} u_\Gamma^{\sigma}(\Omega_+, \Omega_-;\varepsilon) = u_\Gamma^{\sigma}(\Omega_+, \Omega_-).
\end{equation}
From here it immediately follows that $(u_\Gamma^{\sigma}(\Omega_+, \Omega_-))_a = u_\Gamma^{\sigma}(\Omega_+, \Omega_-)$, hence the problem is well-posed and the strong duality $u_\Gamma^{\sigma}(\Omega_+, \Omega_-) = v_\Gamma^{\sigma}(\Omega_+, \Omega_-)$ holds.

It turns out that the question is easier to handle for the classical Delsarte constraint where we only have a restriction on the set of positivity.

\subsection{Classical Delsarte constraint}

We consider the case when $\Omega_- = G$, i.e. a restriction is given only on sets of positivity of functions $\varphi$. In this case $B = \emptyset$, the positive cone in the space $F = C(G)^{K}$ is $Q = M =\{ \ff \in C(G)^{K}~:~ \ff|_A \le 0\}$, and by Lemma~\ref{lemma:K-polar} the positive cone in the space $F^* = M(G)^{K}$ is $Q^* = M^* = \{ \mu \in M(G)^{K} : \mu|_{\overline{A}} \le 0, \ \mu|_{\overline{A}^c}\equiv 0 \}$.

The extremal problem $u_\Gamma^{\sigma}(\Omega_+, G)$ takes the form
\begin{align*}
u_\Gamma^{\sigma}(\Omega_+, G)  & =  \inf \left\{ \sum_{\gamma \in  \supp{f} \setminus \{\mathbbm{1}_G\}} f(\gamma) s(\gamma) : \right.   f \in \ell^1_+(\Gamma), \
\varphi|_A \le 0, \\ 
& \qquad\qquad\text{where }\left. \varphi = \mathbbm{1}_G + \sum_{\gamma \in  \supp{f} \setminus \{\mathbbm{1}_G\}} f(\gamma) \gamma
\right\} \\
& = \inf \left\{ \sum_{\gamma \in  \supp{f} \setminus \{\mathbbm{1}_G\}} f(\gamma) s(\gamma) : f \in \ell^1_+(\Gamma),  \ Tf + \mathbbm{1}_G \in M  \right\},
\end{align*}
and the extended problem $u_\Gamma^{\sigma}(\Omega_+, G;\varepsilon)$ is
\begin{align*}
u_\Gamma^{\sigma}(\Omega_+, G;\varepsilon) & = \inf \left\{ \sum_{\gamma \in  \supp{f} \setminus \{\mathbbm{1}_G\}} f(\gamma) s(\gamma) : f \in \ell^1_+(\Gamma), \
Tf + \mathbbm{1}_{G} \in M + G(\varepsilon)   \right\} \\
& = \inf \left\{ \sum_{\gamma \in  \supp{f} \setminus \{\mathbbm{1}_G\}} f(\gamma) s(\gamma) : f \in \ell^1_+(\Gamma), \
Tf + \mathbbm{1}_G \le \varepsilon \mathbbm{1}_{G} \ \text{on} \ A   \right\}.
\end{align*}

Arguing as Arestov and Babenko in \cite{AB}, we show that
\begin{equation} \label{eq:AB_A}
u_\Gamma^{\sigma}(\Omega_+, G;\varepsilon) = (1 - \varepsilon) u_\Gamma^{\sigma}(\Omega_+, G), \quad 0 < \varepsilon < 1.
\end{equation}
To prove \eqref{eq:AB_A} we notice that for each $g$ which is feasible in the problem $u_\Gamma^{\sigma}(\Omega_+, G;\varepsilon)$ (i.e., $g \in \ell^1_+(\Gamma)$ and $\sum_{\gamma \in \supp{g} \setminus \{\mathbbm{1}_G\}} g(\gamma) \gamma + (1 - \varepsilon) \mathbbm{1}_G \le 0$ on $A$),  the function $f = \frac{1}{1 - \varepsilon} g$ is feasible in the problem $u_\Gamma^{\sigma}(\Omega_+, G)$ (namely, $f \in \ell^1_+(\Gamma)$ and $\sum_{\gamma \in  \supp{f} \setminus \{\mathbbm{1}_G\}} f(\gamma) \gamma + \mathbbm{1}_G \le 0$ on $A$). Conversely, for each $f$ feasible in the problem $u_\Gamma^{\sigma}(\Omega_+, G)$, the function $g = (1 - \varepsilon) f$ is feasible in the problem $u_\Gamma^{\sigma}(\Omega_+, G;\varepsilon)$. Obviously,
$$
\sum_{\gamma \in \supp{g} \setminus \{\mathbbm{1}_G\}} g(\gamma) s(\gamma) = (1 - \varepsilon) \sum_{\gamma \in  \supp{f} \setminus \{\mathbbm{1}_G\}} f(\gamma) s(\gamma),
$$
and \eqref{eq:AB_A} follows. This immediately implies \eqref{eq:C_correctly_posed}, and we arrive at the following statement.

\begin{theorem} \label{theorem:one-sided}

Let $(G,K)$ be a compact Gelfand pair . Let $\Omega_+$ be a $K$-bi-invariant symmetric Borel subset of $G$ with $e \in \intt{\Omega_+}$. Let $\sigma \in M(G)^K$ be a positive definite measure satisfying Wiener's condition. Then
$$
u_\Gamma^{\sigma}(\Omega_+, G) = v_\Gamma^{\sigma}(\Omega_+, G).
$$

\end{theorem}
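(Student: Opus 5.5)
The plan is to apply Corollary~\ref{corollary:Duffin-strong-duality} to the primal problem $u_\Gamma^{\sigma}(\Omega_+, G)$. The setup above has already cast it in the form \eqref{eq:LP-prim-problem}, with $E=\ell^\infty(\Gamma)$ (weak-$\ast$ topology), $E^*=\ell^1(\Gamma)$, $P=\ell^\infty_+(\Gamma)$, $F=C(G)^K$, $Q=M$, $b=-\mathbbm{1}_G$ and the cost vector $c$. The dual is exactly $v_\Gamma^{\sigma}(\Omega_+, G)$, with $Q^*=M^*$ described by Lemma~\ref{lemma:K-polar}.

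First I check that the framework applies. The operator $T$ maps $\ell^1(\Gamma)$ into $C(G)^K$, since the series converges uniformly because $|\gamma|\le 1$. Its adjoint $T^*$ maps $M(G)^K$ into $\ell^\infty(\Gamma)$, since $|\int_G\gamma\,\mathrm{d}\mu|\le|\mu|(G)$, and the adjoint identity was verified above. I also need $c\in\ell^\infty(\Gamma)$, which holds because $|s(\gamma)|\le|\sigma|(G)$, and $M$ must be a closed convex cone in the sup norm, which is clear. Consistency and finiteness of $u_\Gamma^{\sigma}(\Omega_+,G)$ then follow as already noted: the normalised function $\frac{1}{\lambda_G(V)}\mathbbm{1}_V\star\mathbbm{1}_V$, rescaled so that $f(\mathbbm{1}_G)=1$, is feasible. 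The objective $\sum f(\gamma)s(\gamma)$ is nonnegative because $\sigma$ is positive definite, so $s\ge0$. Hence $0\le u<\infty$.

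The main step is well-posedness, i.e.\ $u_a=u$. Here I would argue directly through the $\varepsilon$-problems. Let $(f_n)$ be any asymptotically feasible sequence, with $Tf_n+\mathbbm{1}_G=q_n+z_n$, $q_n\in M$ and $\|z_n\|_\infty\to0$. For a given $\varepsilon\in(0,1)$, for all large $n$ we have $Tf_n+\mathbbm{1}_G\le\varepsilon$ on $A$. Thus $f_n$ is feasible for $u_\Gamma^{\sigma}(\Omega_+,G;\varepsilon)$, and by the scaling identity \eqref{eq:AB_A} we get $\langle c,f_n\rangle_1\ge(1-\varepsilon)u$. Taking $\liminf_n$, then the infimum over sequences, and finally letting $\varepsilon\to0^+$ gives $u_a\ge u$. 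The reverse inequality $u_a\le u$ is automatic.

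The one point needing care is the scaling argument behind \eqref{eq:AB_A}. It uses the fact that the constant $\mathbbm{1}_G$ term sits outside $T$, and that $g/(1-\varepsilon)$ remains in $\ell^1_+(\Gamma)$. Both are immediate, so I do not expect a real obstacle in the one-sided case. With $u=u_a$ established, Theorem~\ref{theorem:Duffin-duality} gives $u_a=v$ and the dual is consistent, so Corollary~\ref{corollary:Duffin-strong-duality} yields $u_\Gamma^{\sigma}(\Omega_+,G)=v_\Gamma^{\sigma}(\Omega_+,G)$.
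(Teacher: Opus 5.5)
Your proposal is correct and follows the paper's proof: you establish consistency and finiteness of $u_\Gamma^{\sigma}(\Omega_+,G)$, then get well-posedness from the Arestov--Babenko scaling identity $u_\Gamma^{\sigma}(\Omega_+,G;\varepsilon)=(1-\varepsilon)\,u_\Gamma^{\sigma}(\Omega_+,G)$, and conclude with Corollary~\ref{corollary:Duffin-strong-duality}. The only difference is that you spell out how an asymptotically feasible sequence becomes feasible for the $\varepsilon$-problems and hence yields $u_a\ge u$, a step the paper simply calls immediate.
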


\subsection{The more general Delsarte-type constraint}

We now come back to the general case when restrictions are posed on both the set of positivity and the set of negativity of~$\varphi$.

Recall that
$$
\mathcal{A}^{K,\sigma}_G(\Omega_+, \Omega_{-}) = \sup \left\{\int_G \varphi \, \mbox{d}\lambda_G : \varphi \in \mathcal{F}^{K,\sigma}_G(\Omega_+, \Omega_-)  \right\},
$$
where
$$
\mathcal{F}^{K,\sigma}_G(\Omega_+, \Omega_-) = \{ \varphi \in C(G)^K : \varphi  \gg 0, \ \langle \ff, \sigma \rangle_2 = 1, \  \varphi|_{\Omega_+^c} \le 0, \  \varphi|_{\Omega_-^c} \ge 0 \}.
$$
To establish the duality result, we consider the problem on the extended class
\begin{align*}
\mathcal{F}^{K,\sigma}_G(\Omega_+, \Omega_-; \varepsilon) & := \{ \varphi \in C(G)^K : \varphi  \gg 0, \ \langle \ff, \sigma \rangle_2 = 1, \  \varphi \in Q + G(\varepsilon) \} \\
& = \{ \varphi \in C(G)^K : \varphi  \gg 0, \ \langle \ff, \sigma \rangle_2 = 1, \  \varphi|_{\Omega_+^c} \le \varepsilon, \  \varphi|_{\Omega_-^c} \ge -\varepsilon \},
\end{align*}
and the extremal value 
$$
\mathcal{A}^{K,\sigma}_G(\Omega_+, \Omega_-;\varepsilon) := \sup\left\{\int_G \varphi \, \mbox{d}\lambda_G  : \varphi \in \mathcal{F}^{K,\sigma}_G(\Omega_+, \Omega_-; \varepsilon) \right\}, 
$$
where $\varepsilon > 0$. 

We will employ Assumption~O that was introduced in Section~\ref{section:introduction}.

\begin{lemma} \label{lemma:C_epsilon}

Let $(G,K)$ be a compact Gelfand pair. Let $\Omega_+$, $\Omega_-$ be $K$-bi-invariant symmetric Borel subsets of $G$ with $e \in \intt{\Omega_+}$, and  $\Omega_+^c$, $\Omega_-^c$ satisfying Assumption~O. 
Assume that $\sigma\in M(G)^K$ has the form $\sigma = \delta_e^K + \tau $, where \(\tau \in M(G)^K\) is positive definite and absolutely continuous with respect to the Haar measure $\lambda_G$.
Then
$$
\lim_{\varepsilon \to 0+} \mathcal{A}^{K,\sigma}_G(\Omega_+, \Omega_-;\varepsilon) = \mathcal{A}^{K,\sigma}_G(\Omega_+, \Omega_{-}).
$$

\end{lemma}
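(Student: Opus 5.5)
The function $\varepsilon\mapsto\mathcal{A}^{K,\sigma}_G(\Omega_+,\Omega_-;\varepsilon)$ is non-increasing as $\varepsilon\downarrow 0$, and it is always at least $\mathcal{A}^{K,\sigma}_G(\Omega_+,\Omega_-)$. So the limit exists and is $\ge\mathcal{A}^{K,\sigma}_G(\Omega_+,\Omega_-)$. It therefore suffices to show: given $\varepsilon_n\to0$ and $\varphi_n\in\mathcal{F}^{K,\sigma}_G(\Omega_+,\Omega_-;\varepsilon_n)$, we have $\limsup_n\int_G\varphi_n\,\mathrm{d}\lambda_G\le\mathcal{A}^{K,\sigma}_G(\Omega_+,\Omega_-)$. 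I would argue by compactness. Write $f_n=\delta\widehat{\varphi_n}\in\ell^1_+(\Gamma)$. Since $\sigma=\delta_e^K+\tau$ with $\tau$ positive definite, we have $1=\langle\varphi_n,\sigma\rangle_2=\varphi_n(e)+\langle\varphi_n,\tau\rangle_2\ge\varphi_n(e)=\|f_n\|_1$, using $\langle\varphi_n,\tau\rangle_2=\sum f_n(\gamma)\widehat\tau(\gamma)\ge0$. Passing to a subsequence, $f_n\to f$ pointwise on the countable set $\bigcup_n\supp f_n$, with $f\in\ell^1_+$ and $\|f\|_1\le1$. Then $\varphi_n\to\varphi:=\widecheck f$ weakly, meaning $\int\varphi_n h\,\mathrm{d}\lambda_G\to\int\varphi h\,\mathrm{d}\lambda_G$ for every $h\in L^1(G)^K$. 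This holds because the $\varphi_n$ are uniformly bounded by $1$, and for each trigonometric polynomial $h$ the pairing involves only finitely many coefficients; density then extends it to all of $L^1$. In particular $\int\varphi_n\,\mathrm{d}\lambda_G=f_n(\mathbbm 1_G)\to f(\mathbbm 1_G)=\int\varphi\,\mathrm{d}\lambda_G$.

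Next I check that $\varphi$ satisfies the sign constraints; this is where Assumption~O enters. Take $g\in A=\Omega_+^c$ and suppose $\varphi(g)>0$. By continuity, $\varphi>\eta>0$ on a neighbourhood $V$ of $g$, which can be taken $K$-bi-invariant. Assumption~O gives $\lambda_G(V\cap A)>0$. Test against $h=\mathbbm 1_{V\cap A}$, which is $K$-bi-invariant. This gives $\int_{V\cap A}\varphi=\lim\int_{V\cap A}\varphi_n\le\lim\varepsilon_n\lambda_G(V\cap A)=0$, contradicting $\int_{V\cap A}\varphi>\eta\lambda_G(V\cap A)$. The same argument on $B=\Omega_-^c$ gives $\varphi|_B\ge0$. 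Also $\varphi\gg0$ by Theorem~\ref{expansion}.

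The main obstacle is the normalisation, since mass can escape to infinity in $\Gamma$ and we only know $\|f\|_1\le 1$. The absolutely continuous part is fine: $\tau=t\,\mathrm{d}\lambda_G$ with $t\in L^1(G)^K$, so weak convergence gives $\langle\varphi_n,\tau\rangle_2\to\langle\varphi,\tau\rangle_2$. Hence $\varphi_n(e)\to1-\langle\varphi,\tau\rangle_2$. Fatou gives only $\varphi(e)=\|f\|_1\le\liminf\varphi_n(e)$, so $c:=\langle\varphi,\sigma\rangle_2=\varphi(e)+\langle\varphi,\tau\rangle_2\le1$. This is exactly why $\sigma$ is assumed to have this special form.

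If $\int\varphi\,\mathrm{d}\lambda_G=\lim\int\varphi_n\,\mathrm{d}\lambda_G>0$ (otherwise there is nothing to prove), then $\varphi\not\equiv0$, so $c>0$ by \eqref{eq:sigma-form-positive}, and $\varphi/c\in\mathcal{F}^{K,\sigma}_G(\Omega_+,\Omega_-)$. Therefore
$$\lim_n\int_G\varphi_n\,\mathrm{d}\lambda_G=\int_G\varphi\,\mathrm{d}\lambda_G=c\int_G\frac{\varphi}{c}\,\mathrm{d}\lambda_G\le c\,\mathcal{A}^{K,\sigma}_G(\Omega_+,\Omega_-)\le\mathcal{A}^{K,\sigma}_G(\Omega_+,\Omega_-),$$
using $c\le1$ and $\mathcal{A}^{K,\sigma}_G(\Omega_+,\Omega_-)>0$. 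This completes the limit statement.
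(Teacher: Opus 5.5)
Your proof is correct. It follows the paper's overall skeleton: monotone limit, a near-extremal sequence for $\varepsilon_n\to0$, a compactness extraction, admissibility of the limit via Assumption~O, and the special form $\sigma=\delta_e^K+\tau$ with $\tau\ll\lambda_G$ to control the normalisation. The difference is that you do the compactness step on the Fourier side, which changes the toolkit considerably.

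The paper takes a weak $L^2(G)$ limit of the $\psi_n$ and then needs several extra tools:
\begin{itemize}
\item Mazur's lemma and a further subsequence, to get a.e.\ convergence;
\item Sasv\'ari's theorem, to replace the integrally positive definite limit by a continuous positive definite function;
\item $K$-periodisation, to restore bi-invariance;
\item dominated convergence for the $\tau$-term, and the bound $\|\psi\|_\infty\le\lim_n\Psi_n(e)$ obtained from a.e.\ convergence.
\end{itemize}
It then proves $\langle\psi^K,\sigma\rangle_2=1$ exactly, by combining that upper bound with the lower bound $\ge1$ forced by extremality.

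In your version, the coefficientwise limit $f\in\ell^1_+(\Gamma)$ gives at once a continuous, $K$-bi-invariant, positive definite $\varphi=\sum_\gamma f(\gamma)\gamma$, by Theorem~\ref{expansion}. You use Assumption~O by integrating against $\mathbbm{1}_{V\cap A}$ rather than through a.e.\ pointwise limits. The lower semicontinuity $\varphi(e)=\|f\|_1\le\liminf_n\|f_n\|_1$ is just Fatou on a countable set. Finally, $c\le1$ together with rescaling suffices, so no exact normalisation is needed; equality follows a posteriori anyway when the $\varphi_n$ are near-extremal. Your route is therefore more elementary and self-contained once the spherical expansion is available, while the paper's stays on the function side, closer to the LCA arguments it builds on.

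Two small points to tidy up:
\begin{itemize}
\item First pass to a subsequence along which $\int_G\varphi_n\,\mathrm{d}\lambda_G$ tends to the $\limsup$, and only then do the diagonal extraction.
\item Note explicitly that $\varphi$ is real-valued. For instance, $\int_G\varphi h\,\mathrm{d}\lambda_G$ is a limit of real numbers for every real $h\in L^1(G)^K$.
\end{itemize}
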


\begin{proof}

Since $\widehat{\delta_e^K} = \mathbbm{1}_\Gamma$, the measure $\sigma$ is positive definite and satisfies Wiener's condition. 

The proof uses ideas from the papers \cite{BRR_Expo} by Berdysheva, Ramabulana, R\'ev\'esz, \cite{marcell-zsuzsa} by Ga\'al, Nagy-Csiha, and  \cite{ramabulana} by Ramabulana.

It is clear that 
\begin{equation} \label{eq:eps_lemma_C_class_incl}
\mathcal{F}^{K,\sigma}_G(\Omega_+, \Omega_-) \subset \mathcal{F}^{K,\sigma}_G(\Omega_+, \Omega_-; \varepsilon_1) \subset \mathcal{F}^{K,\sigma}_G(\Omega_+, \Omega_-; \varepsilon_2), \quad 0 < \varepsilon_1 < \varepsilon_2.
\end{equation}
Consequently,
$$
\mathcal{A}^{K,\sigma}_G(\Omega_+, \Omega_{-}) \le \mathcal{A}^{K,\sigma}_G(\Omega_+, \Omega_-;\varepsilon_1) \le \mathcal{A}^{K,\sigma}_G(\Omega_+, \Omega_-;\varepsilon_2), \quad 0 < \varepsilon_1 < \varepsilon_2.
$$
Since the quantity $\mathcal{A}^{K,\sigma}_G(\Omega_+, \Omega_-;\varepsilon)$ decreases as $\varepsilon$ monotonically decreases to $0$, and is bounded below by $\mathcal{A}^{K,\sigma}_G(\Omega_+, \Omega_-)$, the limit
$$
\widetilde{\mathcal{A}} := \lim_{\varepsilon \to 0+} \mathcal{A}^{K,\sigma}_G(\Omega_+, \Omega_-;\varepsilon) 
$$
exists and
\begin{equation} \label{eq:A-positive}
\widetilde{\mathcal{A}} \ge \mathcal{A}^{K,\sigma}_G(\Omega_+, \Omega_{-}) > 0.
\end{equation}
We wish to show that
$$
\widetilde{\mathcal{A}} = \mathcal{A}^{K,\sigma}_G(\Omega_+, \Omega_{-}).
$$
Denote $\mathcal{A}_n := \mathcal{A}^{K,\sigma}_G \left( \Omega_+, \Omega_-;\frac{1}{n} \right)$. 
Clearly, $\widetilde{\mathcal{A}} = \lim_{n \to \infty} \mathcal{A}_n$.

Take $\psi_n \in \mathcal{F}^{K,\sigma}_G \left( \Omega_+, \Omega_-; \frac{1}{n} \right)$ such that 
\begin{equation} \label{eq:eps_lemma_C_choice_gn}
\mathcal{A}_n - \frac{1}{n} \le \int_G  \psi_n \,\mbox{d}\lambda_G\le \mathcal{A}_n.
\end{equation}
Since $\langle \psi_n, \tau \rangle_2 \ge 0$, we have
$$
1 = \langle \psi_n, \sigma \rangle_2 = \psi_n(e) + \langle \psi_n, \tau \rangle_2 \ge \psi_n(e),
$$
and thus $\|\psi_n\|_\infty \le 1$. Invoking the fact that $\lambda_G(G) = 1$, we have
$$
\int_G |\psi_n|^2 \,\mbox{d}\lambda_G \le \|\psi_n\|_\infty^2 \lambda_G(G) = 1.
$$
Therefore, the sequence $(\psi_n)_{n \in \NN}$ belongs to the closed unit ball of the space $L^2(G)$ which is weakly sequentially compact (e.g.~\cite[Theorem 3.18]{Brezis}). Thus, there is a subsequence of $(\psi_n)_{n \in \NN}$  that converges weakly in $L^2(G)$ to some function $\psi \in L^2(G)$. For simplicity we assume that $(\psi_n)_{n \in \NN}$ itself is such a sequence.

Note that each class $\mathcal{F}^{K,\sigma}_G \left( \Omega_+, \Omega_-; \frac{1}{n} \right)$ is convex, and~\eqref{eq:eps_lemma_C_class_incl} holds.
By Mazur's Lemma (e.g.~\cite[Corollary 3.8, Exercise 3.4]{Brezis}), there exists a sequence $(\Psi_n)_{n \in \NN}$ with $\Psi_n \in \conv{ \left( \bigcup_{k=n}^\infty \{\psi_k\} \right) }$ that converges to $\psi$ strongly in $L^2(G)$.  Take $\varepsilon > 0$. Since $\widetilde{\mathcal{A}} = \lim_{n \to \infty} \mathcal{A}_n$ and $(\mathcal{A}_n)_{n \in \NN}$ is monotonically decreasing, there exists $N \in \NN$ such that
$$
\widetilde{\mathcal{A}} \le \mathcal{A}_k \le \widetilde{\mathcal{A}} + \varepsilon \quad \text{for all } k \ge N.
$$
Assume that  $n \ge N$. Taking into account \eqref{eq:eps_lemma_C_choice_gn}, we obtain for each $k \ge n$
$$
\widetilde{\mathcal{A}} - \frac{1}{n} \le \mathcal{A}_k - \frac{1}{k} \le \int_G  \psi_k \,\mbox{d}\lambda_G  \le \mathcal{A}_k \le \widetilde{\mathcal{A}} + \varepsilon.
$$
Each $\Psi_n$ has the form $\Psi_n = \sum_{k =n}^\infty \alpha_k^{(n)} \psi_k$, where $\alpha_k^{(n)} \ge 0$, $\sum_{k =n}^\infty \alpha_k^{(n)} = 1$, and $\alpha_k^{(n)} = 0$ for all but finitely many~$k$. By linearity we have
$$
\widetilde{\mathcal{A}} - \frac{1}{n} \le \int_G \Psi_n \,\mbox{d}\lambda_G \le \widetilde{\mathcal{A}} + \varepsilon,
$$
and consequently
$$
\widetilde{\mathcal{A}} 
\le \liminf_{n \to \infty} \int_G  \Psi_n \,\mbox{d}\lambda_G 
\le \limsup_{n \to \infty} \int_G  \Psi_n \,\mbox{d}\lambda_G
\le \widetilde{\mathcal{A}} + \varepsilon.
$$
Since this is true for any $\varepsilon > 0$, we conclude that
$$
%\begin{equation} \label{eq:eps_lemma_C-limit}
\lim_{n \to \infty} \int_G  \Psi_n \,\mbox{d}\lambda_G = \widetilde{\mathcal{A}}.
%\end{equation}
$$

By the inclusion~\eqref{eq:eps_lemma_C_class_incl} and the convexity of the classes $\mathcal{F}^{K,\sigma}_G \left( \Omega_+, \Omega_-; \frac{1}{n} \right)$, we have $\Psi_n \in \mathcal{F}^{K,\sigma}_G \left( \Omega_+, \Omega_-; \frac{1}{n} \right)$. The sequence $(\Psi_n)_{n \in \NN}$ converges to $\psi$ strongly and weakly in~$L^2(G)$. The strong convergence in $L^2(G)$ implies that $(\Psi_n)_{n \in \NN}$ contains a subsequence that converges to $\psi$ pointwise almost everywhere with respect to the Haar measure $\lambda_G$ (e.g.,~\cite[Theorem~4.9]{Brezis}). For simplicity we assume that $(\Psi_n)_{n \in \NN}$ itself is such a sequence.

Since $(\Psi_n)_{n \in \NN}$ converges to $\psi$ weakly in~$L^2(G)$, we have that
$$
\int_G \psi (\xi \star \widetilde{\xi}) \,\mbox{d}\lambda_G =  \lim_{n \to \infty} \int_G \Psi_n (\xi \star \widetilde{\xi}) \,\mbox{d}\lambda_G \ge 0
$$
for any complex-valued function $\xi \in C(G)$. This means that $\psi$ is an integrally positive definite function. Since $G$ is compact (and in particular $\sigma$-compact), $\psi$ agrees almost everywhere with respect to $\lambda_G$ with a continuous positive definite function (e.g.~\cite[Theorem 1.7.3]{sasvari}). For simplicity we denote this continuous function again by $\psi$.  The sequence $(\Psi_n)_{n \in \NN}$ converges to the continuous positive definite function $\psi$ strongly and weakly in $L^2(G)$ and pointwise almost everywhere with respect to $\lambda_G$.

Now we are going to exploit the fact that the sets $A = \Omega_+^c$, $B = \Omega_-^c$ satisfy Assumption O. Since $\Psi_n \in \mathcal{F}^{K,\sigma}_G \left( \Omega_+, \Omega_-; \frac{1}{n} \right)$ we have $\Psi_n|_{A} \le \frac{1}{n}$.  If $g \in A$ is a point where $\lim_{n \to \infty} \Psi_n(g) = \psi(g)$, then $\psi(g) \le 0$ follows immediately. Thus, $\psi \le 0$ holds almost everywhere on~$A$. We want to show that $\psi|_{A} \le 0$. Suppose, by contradiction, that there is $g_0 \in A$ with $\psi(g_0) > 0$. Since $\psi$ is continuous, there exists an open neighborhood $V$ of~$g_0$ such that $\psi(g) > 0$ for all $g \in V$. By Assumption O, the set $A \cap V \subset A$ has a positive measure $\lambda_G(A \cap V) > 0$ and $\psi(g) > 0$ for any $g \in A \cap V$, which is a contradiction. This proves that $\psi|_{A} \le 0$. Similarly, $\psi|_{B} \ge 0$.

Since the constant function $\mathbbm{1}_G$ is in $L^2(G)$ on the account of $G$ being compact, by the weak convergence we have that 
$$
%\begin{equation} \label{eq:eps_lemma_C-limit-int}
\int_G \psi \,\mbox{d}\lambda_G = \int_G \psi \mathbbm{1}_G \,\mbox{d}\lambda_G 
= \lim_{n \to \infty} \int_G \Psi_n \mathbbm{1}_G \,\mbox{d}\lambda_G 
=  \lim_{n \to \infty} \int_G \Psi_n \,\mbox{d}\lambda_G = \widetilde{\mathcal{A}}.
%\end{equation}
$$

Consider the $K$-periodisation $\psi^K$ of $\psi$. By Lemma \ref{Ksymmetrisation}, we have that $\psi^K \in C(G)^K$, $\psi^K \gg 0$, $\psi^{K}|_{\Omega_+^c} \le 0$, $\psi^{K}|_{\Omega_-^c} \ge 0$, and 
\begin{equation} \label{eq:eps_lemma_C-limit-int}
\int_{G}\psi^{K} \,\mbox{d}\lambda_G = \int_{G}\psi \,\mbox{d}\lambda_{G} = \widetilde{\mathcal{A}} > 0.
\end{equation}
This implies, in particular, that $\psi^K \not\equiv 0$, and hence $\langle \psi^K,\sigma \rangle_2 > 0$. The function $\psi^K$ satisfies all requirements in the definition of the class $\widetilde{\mathcal{F}}^{K,\sigma}_G(\Omega_+, \Omega_-)$. This implies, by the definition of the extremal constant, \eqref{eq:A-positive} and \eqref{eq:eps_lemma_C-limit-int}, that the following estimate holds:
\begin{equation}\label{eq:lim1}
\frac{\int_G \psi^K \,\mbox{d}\lambda_G}{\langle \psi^K, \sigma \rangle_2} 
\le \mathcal{A}^{K,\sigma}_G(\Omega_+, \Omega_{-})
\le \widetilde{\mathcal{A}} 
= \int_{G}\psi^{K} \,\mbox{d}\lambda_G.
\end{equation}
We conclude that
\begin{equation} \label{eq:eps_lemma_C-normalize}
\langle \psi^K, \sigma \rangle_2 \ge 1.
\end{equation}
Now we use the facts that $\tau$ is $K$-bi-invariant and absolutely continuous with respect to the Haar measure $\lambda_G$.  In particular, the conditions $|\Psi_n| \le \mathbbm{1}_G$ and $\Psi_n \to \psi$ hold also almost everywhere with respect to \(\tau\). Moreover, $\mathbbm{1}_G$ is integrable with respect to $\tau$. Thus, by the Lebesgue Dominated Convergence Theorem
$$
\lim_{n \to \infty} \langle \Psi_n, \tau \rangle_2 = \lim_{n \to \infty} \int_G \Psi_n \,\mbox{d}\tau 
= \int_G \psi \, \mbox{d}\tau = \langle \psi^K, \tau \rangle_2.
$$
Together with $\lim_{n \to \infty} \langle \Psi_n, \sigma \rangle_2 = 1$ this implies that the limit $\lim_{n \to \infty} \Psi_n(e)$ exists and 
$\lim_{n \to \infty} \Psi_n(e) = 1 - \langle \psi^K, \tau \rangle_2$.
The almost everywhere pointwise convergence of $(\Psi_n)_{n \in \NN}$ to $\psi$ implies that 
$$
\|\psi\|_\infty \le \lim_{n \to \infty} \| \Psi_n \|_\infty = \lim_{n \to \infty} \Psi_n(e) = 1 - \langle \psi^K, \tau \rangle_2.
$$
Now we have
$$
\langle \psi^K, \delta_e^K \rangle_2 = \int_{K}\int_{K}\psi(kek') \,\mbox{d}\lambda_K(k) \,\mbox{d}\lambda_{K}(k') 
\le \int_{K}\int_{K} \| \psi\|_\infty \,\mbox{d}\lambda_{K}(k) \,\mbox{d}\lambda_{K}(k') \le 1 - \langle \psi^K, \tau \rangle_2,
$$
which yields
$$
\langle \psi^K, \sigma \rangle_2 = \langle \psi^K, \delta_e^K \rangle_2 + \langle \psi^K, \tau \rangle_2 \le 1.
$$
Together with~\eqref{eq:eps_lemma_C-normalize}, this gives
$$
\langle \psi^K, \sigma \rangle_2 = 1.
$$
Thus, all quantities in~\eqref{eq:lim1} are equal, and therefore
$$
\mathcal{A}^{K,\sigma}_G(\Omega_+, \Omega_{-}) = \widetilde{\mathcal{A}} =  \lim_{\varepsilon \to 0+} \mathcal{A}^{K,\sigma}_G(\Omega_+, \Omega_-;\varepsilon). 
$$
This proves the lemma. 
\end{proof}

Lemma \ref{lemma:C_epsilon} gives the following strong duality statement.

\begin{theorem}\label{main_theorem}

Let $(G,K)$ be a compact Gelfand pair. Let $\Omega_+$, $\Omega_-$ be $K$-bi-invariant symmetric Borel subsets of $G$ with $e \in \intt{\Omega_+}$, and $\Omega_+^c$, $\Omega_-^c$ satisfying Assumption O. Assume that $\sigma\in M(G)^K$ has the form $\sigma = \delta_e^K + \tau $, where \(\tau \in M(G)^K\) is positive definite and absolutely continuous with respect to the Haar measure $\lambda_G$. Then
$$
u_\Gamma^{\sigma}(\Omega_+, \Omega_-) = v_\Gamma^{\sigma}(\Omega_+, \Omega_-).
$$

\end{theorem}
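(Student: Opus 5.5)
The proof will apply Corollary~\ref{corollary:Duffin-strong-duality} to the primal problem $u_\Gamma^{\sigma}(\Omega_+,\Omega_-)$. We already know that this problem is consistent and has a finite value. By the reduction explained before the subsection on the classical constraint, it therefore suffices to establish \eqref{eq:C_correctly_posed}, i.e. $\lim_{\varepsilon\to0+}u_\Gamma^{\sigma}(\Omega_+,\Omega_-;\varepsilon)=u_\Gamma^{\sigma}(\Omega_+,\Omega_-)$. Well-posedness then follows, and strong duality follows from Theorem~\ref{theorem:Duffin-duality}. The plan is to translate \eqref{eq:C_correctly_posed} into the statement of Lemma~\ref{lemma:C_epsilon} through an $\varepsilon$-version of relation \eqref{eq:C-u}.

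Note first that $\sigma=\delta_e^K+\tau$ is positive definite and satisfies Wiener's condition, since $\widehat\sigma=\mathbbm{1}_\Gamma+\widehat\tau\ge 1$. Hence all of the preceding setup applies, and $s(\gamma)\ge1>0$.

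The key step is to show that
$$\mathcal{A}^{K,\sigma}_G(\Omega_+,\Omega_-;\varepsilon')=\frac{1}{\widehat\sigma(\mathbbm{1}_G)+u_\Gamma^\sigma(\Omega_+,\Omega_-;\varepsilon)}$$
holds up to a controlled change of the tolerance, namely with $\varepsilon$ and $\varepsilon'$ comparable. I would carry this out via the two-sided bijection between feasible sets. Take $f$ feasible for $u(\varepsilon)$ and set $\varphi=\mathbbm{1}_G+Tf$. Then $\varphi\gg0$, $\varphi|_A\le\varepsilon$, $\varphi|_B\ge-\varepsilon$, and $\langle\varphi,\sigma\rangle_2=s(\mathbbm{1}_G)+\langle c,f\rangle_1$. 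Dividing by this quantity, which is at least $1$, gives an element of $\mathcal{F}^{K,\sigma}_G(\Omega_+,\Omega_-;\varepsilon)$ with integral $1/(s(\mathbbm{1}_G)+\langle c,f\rangle_1)$. Therefore
$$\mathcal{A}(\varepsilon)\ge \frac{1}{s(\mathbbm{1}_G)+u(\varepsilon)}.$$

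Conversely, take $\varphi\in\mathcal{F}^{K,\sigma}_G(\Omega_+,\Omega_-;\varepsilon)$ with $\int\varphi=a>0$ (only such $\varphi$ matter, since the supremum is positive). Normalize it to $\varphi/a=\mathbbm{1}_G+Tf$. The tolerance becomes $\varepsilon/a$, and $\langle c,f\rangle_1=1/a-s(\mathbbm{1}_G)$. Since $a$ stays close to $\mathcal{A}(\varepsilon)\ge\mathcal{A}>0$, the tolerance $\varepsilon/a\le\varepsilon/\mathcal{A}$ still tends to $0$. This yields
$$u(\varepsilon/\mathcal{A})\le \frac{1}{\mathcal{A}(\varepsilon)}-s(\mathbbm{1}_G)+o(1).$$

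Now combine the two bounds. Since $u(\varepsilon)$ is monotone in $\varepsilon$ and bounded above by $u$, the limit $\lim_{\varepsilon\to0+}u(\varepsilon)$ exists. The second bound together with Lemma~\ref{lemma:C_epsilon} gives
$$\lim_{\varepsilon\to0+} u(\varepsilon)\le \frac{1}{\mathcal{A}}-s(\mathbbm{1}_G)=u,$$
where the last equality is \eqref{eq:C-u}. The reverse inequality $u(\varepsilon)\le u$ is trivial. This proves \eqref{eq:C_correctly_posed}.

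The main obstacle is this bookkeeping of the rescaling. The normalization in $u$ fixes $f(\mathbbm{1}_G)=1$, whereas $\mathcal{A}(\varepsilon)$ fixes $\langle\varphi,\sigma\rangle_2=1$. Converting between the two rescales the tolerance $\varepsilon$ by a factor that must be bounded uniformly. That factor is $1/\int\varphi$, which is controlled by the strict positivity $\mathcal{A}>0$ from \eqref{eq:A-positive}; the bound $\langle\varphi,\sigma\rangle_2\ge s(\mathbbm{1}_G)\ge1$ controls the other direction. Every analytic difficulty (compactness, Assumption~O) is already contained in Lemma~\ref{lemma:C_epsilon}.
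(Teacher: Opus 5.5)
Your first bound, $\mathcal{A}^{K,\sigma}_G(\Omega_+,\Omega_-;\varepsilon)\ge 1/\bigl(s(\mathbbm{1}_G)+u_\Gamma^\sigma(\Omega_+,\Omega_-;\varepsilon)\bigr)$, is exactly the inequality the paper proves. You get it by normalising $\varphi=\mathbbm{1}_G+Tf$ by $\langle\varphi,\sigma\rangle_2\ge 1$, and it is the only ingredient needed. Your final combination, however, is assembled the wrong way round.

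In your shorthand, the second bound $u(\varepsilon/\mathcal{A})\le 1/\mathcal{A}(\varepsilon)-s(\mathbbm{1}_G)+o(1)$ is an \emph{upper} bound on $u(\cdot)$. It can therefore only give $\lim_{\varepsilon\to0+}u(\varepsilon)\le u$. That is the same trivial inequality you then call ``the reverse inequality $u(\varepsilon)\le u$''. It does not even need Lemma~\ref{lemma:C_epsilon}, because $\mathcal{A}(\varepsilon)\ge\mathcal{A}$ already gives $1/\mathcal{A}(\varepsilon)\le 1/\mathcal{A}$. As written, the nontrivial direction $\lim_{\varepsilon\to0+}u(\varepsilon)\ge u$ is never derived, so \eqref{eq:C_correctly_posed} is not established.

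The fix uses only material you already have. Rewrite the first bound as $u(\varepsilon)\ge 1/\mathcal{A}(\varepsilon)-s(\mathbbm{1}_G)$ and let $\varepsilon\to0+$, using Lemma~\ref{lemma:C_epsilon} and \eqref{eq:C-u}. This gives $\lim_{\varepsilon\to0+}u(\varepsilon)\ge 1/\mathcal{A}-s(\mathbbm{1}_G)=u$, and together with $u(\varepsilon)\le u$ it yields \eqref{eq:C_correctly_posed}. This is precisely the paper's chain $\mathcal{A}=1/(s(\mathbbm{1}_G)+u)\le 1/(s(\mathbbm{1}_G)+u(\varepsilon))\le\mathcal{A}(\varepsilon)\to\mathcal{A}$.

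Consequently the second bound is superfluous, and so is your ``main obstacle'' of keeping the rescaled tolerance $\varepsilon/\int_G\varphi\,\mathrm{d}\lambda_G$ under control via $\mathcal{A}>0$. No two-sided comparison of the $\varepsilon$-problems is required.
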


\begin{proof}

To prove the theorem, we must show \eqref{eq:C_correctly_posed}.

The Delsarte-type problem equivalent to $u_\Gamma^{\sigma}(\Omega_+, \Omega_-;\varepsilon)$ differs from $\mathcal{A}^{K,\sigma}_G(\Omega_+, \Omega_-;\varepsilon)$ by the normalisation: instead of $\langle \ff, \sigma \rangle_2 = 1$, we need the normalisation $\int_G \ff \,\mbox{d}\lambda_G =1$. Therefore, for the sake of this proof we introduce the extremal constant, with the notation $s = \widehat{\sigma}$,
\begin{align*}
\mathcal{A}_G^{K,\sigma,1}(\Omega_+, \Omega_-; \varepsilon) & := 
\sup \left\{ \frac{\int_G \ff \,\mbox{d} \lambda_G}{\langle \ff, \sigma \rangle_2} : \ff \in C(G)^K, \ \ff \gg 0, \right.\\
& \left.
\hspace{14mm} \int_G \ff \,\mbox{d}\lambda_G =1, \ \ff|_{\Omega_+^c} \le \varepsilon, \ \ff|_{\Omega_-^c} \ge -\varepsilon
\right\} 
 \\
& = \sup \left\{ \frac{ 1 }{ s(\mathbbm{1}_G) + \sum_{\gamma \in \supp{f} \setminus \{\mathbbm{1}_G\}} f(\gamma) s(\gamma) }  : \right.   f \in \ell^1_+(\Gamma), \
\varphi|_A \le \varepsilon, \\ 
& \hspace{14mm} \varphi|_B \ge -\varepsilon, \text{ where }\left. \varphi = \mathbbm{1}_G + \sum_{\gamma \in \supp{f} \setminus \{\mathbbm{1}_G\}} f(\gamma) \gamma
\right\}.
\end{align*}
We have
$$
\mathcal{A}_G^{K,\sigma,1}(\Omega_+, \Omega_-;\varepsilon) = \frac{1}{s(\mathbbm{1}_G) + u_\Gamma^{\sigma}(\Omega_+, \Omega_-;\varepsilon)}.
$$
Note that admissible functions $\varphi$ in the problem $\mathcal{A}_G^{K,\sigma,1}(\Omega_+, \Omega_-;\varepsilon)$ satisfy $\int_G \varphi \,\mbox{d}\lambda_G = 1$. Since  $\int_G \varphi \,\mbox{d}\lambda_G \le \varphi(e) \lambda_G(G) = \varphi(e)$, we also have $\varphi(e) \ge 1$, and therefore $\langle \ff, \sigma \rangle_2 \ge 1$. Now take $\psi := \frac{\varphi}{\langle \ff, \sigma \rangle_2}$. We have $\psi \in C(G)^K$, $\psi \gg 0$, $\langle \psi, \sigma \rangle_2 = 1$, $\psi|_A \le \frac{\varepsilon}{\langle \ff, \sigma \rangle_2} \le \varepsilon$, $\psi|_B \ge -\frac{\varepsilon}{\langle \ff, \sigma \rangle_2} \ge -\varepsilon$. 
Thus, $\psi \in \mathcal{F}^{K,\sigma}_G(\Omega_+, \Omega_-; \varepsilon)$, and
$$
\frac{\int_G \ff \, \mbox{d}\lambda_G}{\langle \ff, \sigma \rangle_2} = \int_G \psi \, \mbox{d}\lambda_G.
$$
Hence,
$$
\mathcal{A}_G^{K,\sigma,1}(\Omega_+, \Omega_-;\varepsilon) \le \mathcal{A}^{K,\sigma}_G(\Omega_+, \Omega_-;\varepsilon).
$$
Since $u_\Gamma^{\sigma}(\Omega_+, \Omega_-;\varepsilon)$ is an extension of the minimization problem $u_\Gamma^\sigma(\Omega_+,\Omega_-)$, we have that $u_\Gamma^{\sigma}(\Omega_+, \Omega_-) \ge u_\Gamma^{\sigma}(\Omega_+, \Omega_-;\varepsilon)$. Thus,
\begin{align*}
\mathcal{A}^{K,\sigma}_G(\Omega_+, \Omega_{-}) & = \frac{1}{s(\mathbbm{1}_G) + u_\Gamma^{\sigma}(\Omega_+, \Omega_-)} 
\le \frac{1}{s(\mathbbm{1}_G) + u_\Gamma^{\sigma}(\Omega_+, \Omega_-;\varepsilon)} \\
& = \mathcal{A}_G^{K,\sigma,1}(\Omega_+, \Omega_-;\varepsilon) \le \mathcal{A}^{K,\sigma}_G(\Omega_+, \Omega_-;\varepsilon).
\end{align*}
By Lemma~\ref{lemma:C_epsilon},
$\lim_{\varepsilon \to 0+} \mathcal{A}^{K,\sigma}_G(\Omega_+, \Omega_-;\varepsilon) = \mathcal{A}^{K,\sigma}_G(\Omega_+, \Omega_{-})$,
which in turn implies~\eqref{eq:C_correctly_posed}. We conclude the problem $u_\Gamma^{\sigma}(\Omega_+, \Omega_-)$ is well-posed. This implies that the strong duality relation
$$
u_\Gamma^{\sigma}(\Omega_+, \Omega_-) = v_\Gamma^{\sigma}(\Omega_+, \Omega_-)
$$
holds true.

\end{proof}

Now we are going to deduce from Theorems~\ref{theorem:one-sided} and~\ref{main_theorem} a generalisation of Theorem~\ref{theorem:main_group} from Section~\ref{section:introduction}. We introduce the class of measures
$$
T_0(G)^K := \{ \tau \in M(G)^K : \tau \gg 0, \ \tau(G)=\widehat{\tau}(\mathbbm{1}_G) = 0 \}.
$$

\begin{lemma} \label{lemma:dual_formulation}
Let $\sigma \in M(G)^K$ be a positive definite measure. For the extremal value 
\begin{equation} \label{eq:problem_v_copy}
v_\Gamma^{\sigma}(\Omega_+, \Omega_-) 
= \sup \left\{ - \mu(G) : \mu \in Q^*, \ \widehat\mu(\gamma) \le \widehat\sigma(\gamma) \text{ for all } \gamma \ne \mathbbm{1}_G\right\}
\end{equation}
we have
\begin{equation} \label{eq:v_positive}
v_\Gamma^\sigma(\Omega_+,\Omega_-) \ge 0
\end{equation}
and
\begin{equation} \label{eq:lemma_vv}
v_\Gamma^\sigma(\Omega_+,\Omega_-) + \sigma(G) = \sup\{ z \in \RR : \exists \ \mu\in Q^* \text{ and } \tau \in T_0(G)^K
 \text{ with } \sigma = z\lambda_G + \mu + \tau\}. 
\end{equation}
\end{lemma}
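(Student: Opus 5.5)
For \eqref{eq:v_positive} we exhibit a feasible point: $\mu = 0$ lies in $Q^*$, since $Q^*$ is a convex cone containing $0$. Its Fourier coefficients are $\widehat{0}(\gamma) = 0 \le \widehat\sigma(\gamma)$ for every $\gamma$, because $\sigma$ is positive definite and so $\widehat\sigma \ge 0$. Hence $-\mu(G) = 0$ is an admissible value and $v_\Gamma^\sigma(\Omega_+,\Omega_-) \ge 0$.

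For \eqref{eq:lemma_vv} we prove the two inequalities separately, using the bijection $\mu \mapsto z := \sigma(G) - \mu(G)$ between feasible measures and admissible values of $z$, with $\tau := \sigma - z\lambda_G - \mu$.

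\emph{The inequality ``$\le$''.} Let $\mu$ be feasible in \eqref{eq:problem_v_copy}, and set $z := \sigma(G) + (-\mu(G))$ and $\tau := \sigma - z\lambda_G - \mu$. Then $\tau \in M(G)^K$, because $\lambda_G$ is $K$-bi-invariant. We check that $\tau \in T_0(G)^K$.

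First, $\tau(G) = \sigma(G) - z - \mu(G) = 0$.

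Second, for the Fourier coefficients we use $\widehat{\lambda_G}(\gamma) = \int_G \overline{\gamma}\,\mathrm{d}\lambda_G$. This equals $1$ for $\gamma = \mathbbm{1}_G$ and $0$ otherwise, by the orthogonality in Theorem~\ref{expansion} against $\gamma' = \mathbbm{1}_G$. Hence for $\gamma \ne \mathbbm{1}_G$ we get $\widehat\tau(\gamma) = \widehat\sigma(\gamma) - \widehat\mu(\gamma) \ge 0$. The measures are real and the spherical functions can be conjugated, so $\int\gamma\,\mathrm{d}\mu$ and $\widehat\mu(\gamma)$ agree up to replacing $\gamma$ by $\overline\gamma \in \Gamma$; the constraint holds for all $\gamma\neq\mathbbm 1_G$, so this does not matter. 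Together with $\widehat\tau(\mathbbm{1}_G) = \tau(G) = 0$, this gives $\widehat\tau \ge 0$ on all of $\Gamma$.

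The remaining step is to deduce that $\tau \gg 0$ from $\widehat\tau \ge 0$. Take a test function $u \in C(G)^K$. Then $u\star\widetilde u$ is a continuous positive definite $K$-bi-invariant function. By Theorem~\ref{expansion} it equals $\sum_\gamma B(\gamma)\gamma$ with $B \ge 0$, summable, and the series converges uniformly. Integrating term by term against $\tau$ gives $\int u\star\widetilde u\,\mathrm{d}\tau = \sum_\gamma B(\gamma)\int\gamma\,\mathrm{d}\tau \ge 0$. So $\tau \in T_0(G)^K$, and $z$ is admissible on the right-hand side of \eqref{eq:lemma_vv}. Taking the supremum over feasible $\mu$ gives ``$\le$''.

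\emph{The inequality ``$\ge$''.} Conversely, suppose $\sigma = z\lambda_G + \mu + \tau$ with $\mu \in Q^*$ and $\tau \in T_0(G)^K$. For $\gamma \ne \mathbbm{1}_G$ we have $\widehat\mu(\gamma) = \widehat\sigma(\gamma) - \widehat\tau(\gamma) \le \widehat\sigma(\gamma)$, since positive definiteness of $\tau$ gives $\widehat\tau \ge 0$, as noted in Section~\ref{section:notation}. So $\mu$ is feasible in \eqref{eq:problem_v_copy}. Evaluating at $G$ gives $\sigma(G) = z + \mu(G) + 0$, so $z = \sigma(G) - \mu(G) \le \sigma(G) + v_\Gamma^\sigma(\Omega_+,\Omega_-)$. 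Taking the supremum over admissible $z$ gives ``$\ge$''.

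The main obstacle is the implication ``$\widehat\tau \ge 0 \Rightarrow \tau \gg 0$'' in the first direction. It relies on the uniform convergence of the spherical expansion of $u\star\widetilde u$ for $K$-bi-invariant $u$, which Theorem~\ref{expansion} supplies, together with the routine identification of $\widehat{\lambda_G}$.
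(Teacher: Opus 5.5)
Your proposal is correct and follows the paper's proof: $\mu=0$ is feasible, and the correspondence $z=\sigma(G)-\mu(G)$, $\tau=\sigma-z\lambda_G-\mu$ gives both inequalities. You also justify steps the paper leaves implicit: $\widehat\sigma\ge 0$ for the feasibility of $\mu=0$, $\widehat{\lambda_G}(\gamma)=0$ for $\gamma\neq\mathbbm{1}_G$, and the implication $\widehat\tau\ge 0\Rightarrow\tau\gg 0$ via the uniformly convergent spherical expansion of $u\star\widetilde u$.
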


\begin{proof}
Note that the measure $\mu \equiv 0$ is admissible in~\eqref{eq:problem_v_copy}. Therefore, \eqref{eq:v_positive} holds.

We denote the right-hand side of~\eqref{eq:lemma_vv} by $\mathcal{Z}$.
Suppose that $z \in \RR$ is such that
\[
\sigma = z\lambda_G+\mu+\tau \quad \text{with some } \mu\in Q^*, \ \tau \in T_0(G)^K.
\]
Taking Fourier transforms at $\gamma\neq \mathbbm{1}_G$ gives
\[
\widehat{\sigma}(\gamma) = \widehat{\mu}(\gamma) + \widehat{\tau}(\gamma) \ge \widehat{\mu}(\gamma).
\]
Thus, $\mu$ is feasible in~\eqref{eq:problem_v_copy}.
Evaluating at $\mathbbm{1}_G$ yields
\[
\widehat{\sigma}(\mathbbm{1}_G) = \sigma(G) = z + \mu(G),
\]
so
\[
z = \sigma(G) - \mu(G) \le \sigma(G) + v_\Gamma^{\sigma}(\Omega_+, \Omega_-).
\]
It follows that
$$
\mathcal{Z} \le \sigma(G) + v_\Gamma^{\sigma}(\Omega_+, \Omega_-).
$$

Conversely, assume that $\mu\in Q^*$ satisfies  $\widehat{\mu}(\gamma)\le \widehat{\sigma}(\gamma)$ for all $\gamma\neq \mathbbm{1}_G$. Take $z = \sigma(G) - \mu(G)$ and the measure $\tau$ defined by
\[
\sigma = z\lambda_G + \mu + \tau.
\]
Clearly, $\tau \in M(G)^K$. Its Fourier transform is $\widehat{\tau}(\mathbbm{1}_G) = 0$ and $\widehat{\tau}(\gamma) = \widehat{\sigma}(\gamma) - \widehat{\mu}(\gamma) \ge 0$ for $\gamma\neq \mathbbm{1}_G$. 
Thus, $\tau \in T_0(G)^K$, and so $z = \sigma(G) - \mu(G)$ is feasible in the problem $\mathcal{Z}$. This gives
$$
\sigma(G) - \mu(G) = z \le \mathcal{Z},
$$
and consequently 
$$
\sigma (G) + v_\Gamma^{\sigma}(\Omega_+, \Omega_-) \le \mathcal{Z}.
$$
This proves~\eqref{eq:lemma_vv}.
\end{proof}

Now we are going to formulate the main result of this section. We consider the linear programming problems
\begin{equation*}
{\mathcal{A}}^{K, \sigma}_G(\Omega_+, \Omega_-) = \sup \left\{ \int_G\varphi\,\mathrm{d}\lambda_G : \varphi \in \mathcal{F}^{K,\sigma}_G(\Omega_+, \Omega_-) \right\}
\end{equation*}
and
\begin{align*}
{\mathcal{A}}^{K, \sigma}_G(\Omega_+, \Omega_-)^{*}  
& := \inf \{ \alpha \in \RR : \alpha \sigma - \lambda_G \in Q^* + T_0(G)^K \}.
\end{align*} 

\begin{theorem} \label{theorem:main_duality_G}
Let $(G,K)$ be a compact Gelfand pair, and let $\Omega_+$ be a $K$-bi-invariant symmetric Borel subset of $G$ with $e \in \intt{\Omega_+}$. Let $\sigma \in M(G)^K$ be a positive definite measure that satisfies Wiener's condition. Then the problem ${\mathcal{A}}^{K, \sigma}_G(\Omega_+, G)^{*}$ is the dual problem of ${\mathcal{A}}^{K, \sigma}_G(\Omega_+, G)$, and we have the strong duality relation
\begin{equation*}
   {\mathcal{A}}^{K, \sigma}_G(\Omega_+, G) = {\mathcal{A}}^{K, \sigma}_G(\Omega_+, G)^{*}.
\end{equation*}
\end{theorem}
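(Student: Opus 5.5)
We derive the statement from Theorem~\ref{theorem:one-sided} combined with Lemma~\ref{lemma:dual_formulation} and the relation~\eqref{eq:C-u}. Throughout, $\Omega_-=G$, so $B=\emptyset$ and $Q^*=M^*$.

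\textbf{Primal side.} By~\eqref{eq:C-u},
\[
\mathcal{A}^{K,\sigma}_G(\Omega_+,G)=\frac{1}{\widehat\sigma(\mathbbm{1}_G)+u^\sigma_\Gamma(\Omega_+,G)} .
\]
Theorem~\ref{theorem:one-sided} gives $u^\sigma_\Gamma=v^\sigma_\Gamma$. Since $\widehat\sigma(\mathbbm{1}_G)=\sigma(G)$, Lemma~\ref{lemma:dual_formulation} then yields
\[
\mathcal{A}^{K,\sigma}_G(\Omega_+,G)=\frac{1}{\mathcal{Z}},\qquad
\mathcal{Z}=\sup\{z\in\RR:\ \sigma=z\lambda_G+\mu+\tau,\ \mu\in Q^*,\ \tau\in T_0(G)^K\}.
\]
Here $\mathcal Z=\sigma(G)+v^\sigma_\Gamma\ge\sigma(G)>0$. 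Indeed, $\sigma(G)=\widehat\sigma(\mathbbm{1}_G)>0$ by Wiener's condition together with positive definiteness, and $v^\sigma_\Gamma\ge 0$ by~\eqref{eq:v_positive}. So $\mathcal Z$ is finite and positive; finiteness holds because $u^\sigma_\Gamma$ is finite, as shown earlier.

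\textbf{Dual side.} It remains to show $\mathcal{A}^{K,\sigma}_G(\Omega_+,G)^*=1/\mathcal Z$. Feasibility in the dual problem is preserved under positive scaling, because $Q^*$ and $T_0(G)^K$ are cones.

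If $z>0$ is feasible in $\mathcal Z$, then dividing the decomposition by $z$ gives
\[
\tfrac1z\sigma-\lambda_G=\tfrac1z\mu+\tfrac1z\tau\in Q^*+T_0(G)^K ,
\]
so $\alpha=1/z$ is admissible in the dual problem. Conversely, suppose $\alpha$ is admissible, i.e. $\alpha\sigma-\lambda_G=\mu+\tau$. We check that $\alpha>0$ (the degenerate case). Suppose $\alpha\le0$, and test the identity against a nonzero $\varphi$ in the primal class, normalised so that $\int_G\varphi\,\mathrm d\lambda_G>0$ (such $\varphi$ exists, as shown earlier). We have $\int\varphi\,\mathrm d\mu\ge0$ since $\varphi\in Q$ and $\mu\in Q^*$. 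We have $\int\varphi\,\mathrm d\tau\ge0$ since $\widehat\tau\ge0$ and $\varphi$ has non-negative spherical coefficients. We have $\langle\varphi,\sigma\rangle_2>0$ by~\eqref{eq:sigma-form-positive}. The identity therefore gives
\[
\alpha\langle\varphi,\sigma\rangle_2-\int_G\varphi\,\mathrm d\lambda_G\ge0 ,
\]
which is impossible when $\alpha\le0$. Hence $\alpha>0$, and $z=1/\alpha$ is feasible in $\mathcal Z$. Taking the infimum over $\alpha$ corresponds to taking the supremum over $z$, so $\mathcal{A}^{K,\sigma}_G(\Omega_+,G)^*=1/\mathcal Z$, which proves the strong duality.

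The same pairing computation also gives weak duality directly. That $\mathcal A^*$ is the dual problem in the linear-programming sense is the content of this reformulation: it is Duffin's dual $v^\sigma_\Gamma$ rewritten via Lemma~\ref{lemma:dual_formulation} and rescaled.

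The main obstacle is the step that excludes non-positive $\alpha$, and more specifically justifying $\int\varphi\,\mathrm d\tau\ge0$ for a positive definite measure $\tau$ and a continuous positive definite $\varphi$. This follows by expanding $\varphi=\sum_\gamma B(\gamma)\gamma$ with the uniformly convergent series of Theorem~\ref{expansion} and integrating term by term. Since $\gamma$ is real-valued along the relevant symmetric objects, or after taking real parts, $\int\gamma\,\mathrm d\tau=\widehat\tau(\gamma)\ge0$.
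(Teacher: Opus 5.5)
Your proposal is correct and follows the paper's proof essentially step for step. You combine \eqref{eq:C-u}, Theorem~\ref{theorem:one-sided} and Lemma~\ref{lemma:dual_formulation} to get ${\mathcal{A}}^{K,\sigma}_G(\Omega_+,G)=1/\mathcal{Z}$ with $\mathcal{Z}\ge\sigma(G)>0$, and then identify ${\mathcal{A}}^{K,\sigma}_G(\Omega_+,G)^*=1/\mathcal{Z}$ via the substitution $z=1/\alpha$; in addition, your explicit argument that no $\alpha\le 0$ is admissible in the dual problem (pairing the decomposition with a primal-feasible $\varphi$) supplies a step the paper leaves implicit when it restricts to $1/\alpha>0$.
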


\begin{theorem} \label{theorem:main_duality}
Let $(G,K)$ be a compact Gelfand pair, and let $\Omega_+$, $\Omega_-$ be $K$-bi-invariant symmetric Borel subsets of $G$ with $e \in \intt{\Omega_+}$ satisfying Assumption~O.  Assume that $\sigma\in M(G)^K$ has the form $\sigma = \delta_e^K + \tau $, where \(\tau \in M(G)^K\) is positive definite and absolutely continuous with respect to the Haar measure $\lambda_G$. Then the problem ${\mathcal{A}}^{K, \sigma}_G(\Omega_+, \Omega_-)^{*}$ is the dual problem of ${\mathcal{A}}^{K, \sigma}_G(\Omega_+, \Omega_-)$, and we have the strong duality relation
\begin{equation*}
   {\mathcal{A}}^{K, \sigma}_G(\Omega_+, \Omega_-) = {\mathcal{A}}^{K, \sigma}_G(\Omega_+, \Omega_-)^{*}.
\end{equation*}
\end{theorem}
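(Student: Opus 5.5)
The plan is to combine Theorem~\ref{main_theorem} with Lemma~\ref{lemma:dual_formulation}, following the same route that yields Theorem~\ref{theorem:main_duality_G} from Theorem~\ref{theorem:one-sided}. First I check that the hypotheses of Theorem~\ref{main_theorem} are met. Since $\widehat{\delta_e^K}=\mathbbm{1}_\Gamma$ and $\widehat\tau\ge 0$, we get $\widehat\sigma\ge 1>0$. Hence $\sigma$ is positive definite and satisfies Wiener's condition, and $s(\mathbbm{1}_G)=\sigma(G)>0$. Theorem~\ref{main_theorem} then gives $u_\Gamma^\sigma(\Omega_+,\Omega_-)=v_\Gamma^\sigma(\Omega_+,\Omega_-)$. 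By \eqref{eq:C-u},
$$
\mathcal{A}^{K,\sigma}_G(\Omega_+,\Omega_-)=\frac{1}{\sigma(G)+v_\Gamma^\sigma(\Omega_+,\Omega_-)}.
$$
The denominator is positive by \eqref{eq:v_positive}. By Lemma~\ref{lemma:dual_formulation}, it equals $\mathcal{Z}:=\sup\{z: \sigma=z\lambda_G+\mu+\tau,\ \mu\in Q^*,\ \tau\in T_0(G)^K\}$.

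It remains to identify $1/\mathcal{Z}$ with $\mathcal{A}^{K,\sigma}_G(\Omega_+,\Omega_-)^*$. For $z>0$, divide the identity $\sigma=z\lambda_G+\mu+\tau$ by $z$ and set $\alpha=1/z$. This gives $\alpha\sigma-\lambda_G=\alpha\mu+\alpha\tau\in Q^*+T_0(G)^K$, because $Q^*$ and $T_0(G)^K$ are cones. Conversely, suppose $\alpha\sigma-\lambda_G=\mu'+\tau'$ with $\mu'\in Q^*$, $\tau'\in T_0(G)^K$. First I show $\alpha>0$, and then dividing by $\alpha$ gives a representation with $z=1/\alpha$. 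For $\alpha>0$: pair the identity with an admissible $\varphi\in\mathcal{F}^{K,\sigma}_G(\Omega_+,\Omega_-)$, which is nonempty. We have $\langle\varphi,\mu'\rangle_2\ge0$ since $\varphi\in Q$, and $\langle\varphi,\tau'\rangle_2=\sum_\gamma\delta(\gamma)\widehat\varphi(\gamma)\widehat{\tau'}(\gamma)\ge0$ since both transforms are nonnegative. This yields
$$
\alpha=\alpha\langle\varphi,\sigma\rangle_2\ge\int_G\varphi\,\mathrm{d}\lambda_G>0
$$
for the nonnegative admissible function constructed in Section~\ref{section:notation}. The same pairing shows weak duality $\mathcal{A}\le\mathcal{A}^*$ directly. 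Only $z>0$ matters in $\mathcal{Z}$, because $\mathcal{Z}=\sigma(G)+v\ge\sigma(G)>0$ and feasible $z$ form a down-closed set: lowering $z$ can be absorbed into $\tau$ by adding a multiple of $\lambda_G$? This is not quite right, since $\lambda_G$ has $\widehat{\lambda_G}(\mathbbm{1}_G)=1\neq0$. It is, however, unnecessary: taking the supremum over positive feasible $z$ already equals $\mathcal{Z}$. So the map $z\mapsto1/z$ is a bijection between positive feasible $z$ and feasible $\alpha$, and $\inf\alpha=1/\sup z$. Hence $\mathcal{A}^*=1/\mathcal{Z}=\mathcal{A}$.

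The one delicate point is the justification that the expression $\mathcal{A}^*$ is "the dual problem". I would present it as the transcription of $v_\Gamma^\sigma$ through the reparametrization above, with $Q^*=M^*+L^*$ from Theorem~\ref{theorem:Q-polar}. The main obstacle is thus not analytic; that work is done in Lemma~\ref{lemma:C_epsilon}. It is bookkeeping: the sign conventions relating $Q^*$ to $M^*(\Omega_-)-M^*(\Omega_+)$, and the care that $\alpha$ and $z$ are strictly positive so the inversion is legitimate.
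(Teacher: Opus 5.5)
Your proposal is correct and follows essentially the paper's route: Theorem~\ref{main_theorem} together with \eqref{eq:C-u} and Lemma~\ref{lemma:dual_formulation} gives $\mathcal{A}^{K,\sigma}_G(\Omega_+,\Omega_-)=1/\mathcal{Z}$, and the substitution $z=1/\alpha$ (using $\mathcal{Z}\ge\sigma(G)>0$) identifies $1/\mathcal{Z}$ with the dual value. Your weak-duality pairing against an admissible $\varphi$, which shows that every feasible $\alpha$ is strictly positive, makes explicit a point the paper's substitution uses only tacitly; the aside about down-closedness is not needed and can simply be deleted.
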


\begin{proof} 
Let $\mathcal{Z}$ again denote the right-hand side of~\eqref{eq:lemma_vv}. Equations~\eqref{eq:v_positive} and~\eqref{eq:lemma_vv} imply $\mathcal{Z} \ge \sigma(G) > 0$. The substitution $z = \frac{1}{\alpha}$ gives
\begin{align*}
\mathcal{Z}
& = \sup\{ z > 0 : \sigma - z\lambda_G \in Q^* + T_0(G)^K \} \\
& = \sup\left\{ \frac{1}{\alpha} > 0 : \alpha \sigma - \lambda_G \in Q^* + T_0(G)^K \right\}.
\end{align*}
It follows that
$$
{\mathcal{A}}^{K, \sigma}_G(\Omega_+, \Omega_-)^{*} = \frac{1}{\mathcal{Z}}.
$$
Now using~\eqref{eq:C-u}, Theorem~\ref{theorem:one-sided} for Theorem~\ref{theorem:main_duality_G} and Theorem~\ref{main_theorem} for Theorem~\ref{theorem:main_duality}, respectively, and Lemma~\ref{lemma:dual_formulation}, we obtain
$$
{\mathcal{A}}^{K, \sigma}_G(\Omega_+, \Omega_-) = \frac{1}{\sigma(G) + u_\Gamma^\sigma(\Omega_+, \Omega_-)}
= \frac{1}{\sigma(G) + v_\Gamma^\sigma(\Omega_+, \Omega_-)} = \frac{1}{\mathcal{Z}} = {\mathcal{A}}^{K, \sigma}_G(\Omega_+, \Omega_-)^{*}.
$$

\end{proof}

In conclusion, let us see how Theorems~\ref{theorem:main_group} and~\ref{theorem:main_group_G} follow from what has been done above. They are special cases of the more general Theorems~\ref{theorem:main_duality} and~\ref{theorem:main_duality_G}, when $K=\{0\}$, $\sigma = \delta_0^{\{0\}} = \delta_0$.

\section{Acknowledgements}

This research was partially supported by the DAAD-Tempus PPP Grant 57448965 ``Harmonic Analysis and Extremal Problems''.

Elena E. Berdysheva was supported  in part by the University of Cape Town's Research Committee (URC).

Elena E. Berdysheva and Mita D. Ramabulana thank the HUN-REN R\'enyi Institute of Mathematics for hospitality during their respective visits.

Marcell Ga\'al was supported by the National Research, Development and Innovation Office -- NKFIH Reg. No.'s K-115383 and K-128972, and also by the Ministry for Innovation and Technology, Hungary throughout Grant TUDFO/47138-1/2019-ITM.

Mita D. Ramabulana was supported by the Carnegie DEAL 3 Postdoctoral Fellowship.

Szil\'ard Gy. Révész was supported in part by the Hungarian National Research, Development and Innovation Fund projects \# K-119528, K-132097, K-146387, K-147153 and Excellence No.
151341.

%%%%%%%%%%%%%%%%%%%%%%%%%%%%%%%%%%%%%%%%%%%%%%%%%%%%%%%%
%%%%%%%%%%%%%%%%%%%%%%%%%%%%%%%%%%%%%%%%%%%%%%%%%%%%%%%%

%\newpage

\bibliographystyle{amsplain}

\begin{thebibliography}{99}

\bibitem{AB}
V.\,V.~Arestov and A.\,G.~Babenko, \emph{On the Delsarte scheme for estimating contact numbers},
Proc. Steklov Inst. Math. \textbf{4} (1997), 36--65.

\bibitem{areberd} 
V.\,V.~Arestov and E.\,E.~Berdysheva, \emph{Tur\'an's problem for positive definite functions with supports in a hexagon}, Proc. Steklov Inst. Math., Supp 1. (2001), 20--29.

\bibitem{areberd0} 
V.\,V.~Arestov and E.\,E.~Berdysheva, \emph{Tur\'an's problem for a class of polytopes}, East J. Approx. (2002), no.~8, 381--388.

\bibitem{C-paper} 
E.\,E.~Berdysheva, B.~Farkas, M.\,D.~Ramabulana, M.~Ga\'al, and Sz.\,Gy.~R\'ev\'esz, \emph{Duality for Delsarte's extremal problem on locally compact Abelian groups}, ArXiv preprint, arXiv:2603.18287.

\bibitem{Berdysheva}
E.\,E.~Berdysheva and Sz.\,Gy.~R\'ev\'esz,
\emph{Delsarte's extremal problem and packing on locally compact Abelian groups},  Annali della Scuola Normale di Pisa – Classe di Scienze (5), {\bf XXIV} (2023), 1007--1052. 

\bibitem{BRR_Expo}
E.\,E.~Berdysheva, M.\,D.~Ramabulana, and Sz.\,Gy.~Révész, \emph{On extremal problems of Delsarte type for positive definite functions on LCA groups}, Expo. Math. {\bf 44} (2026), 125663.

\bibitem{bergetal}
C.~Berg, A.\,P.~Peron, and E.~Porcu, \emph{Orthogonal expansions related to compact Gelfand pairs}, Expo. Math. {\bf 36} (2018), 259--277.

\bibitem{Brezis} 
H.~Brezis, \emph{Functional Analysis, Sobolev Spaces, and Partial Differential Equations}, Springer-Verlag, New York (2010).

\bibitem{frederik}
F.~Broucke and Sz.\,Gy.~Révész, \emph{On Landau's local method obtaining zero-free regions}, manuscript.


\bibitem{cohn-measure-theory}
D.\,L.~Cohn, \emph{Measure Theory}, 2nd edition, Birkh\"auser, 2013.

\bibitem{cohn-elkies}
H.~Cohn and N.~Elkies, \emph{New upper bounds for sphere packings, I}, Ann. of Math. (2) \textbf{157} (2003), 689--714.

\bibitem{Cohn-Laat-Salmon} 
H.~Cohn, D.~de Laat, A.~Salmon, \emph{Three-point bounds for sphere packing},  ArXiv prerint, arXiv:2206.15373.

\bibitem{Cohn} 
H.~Cohn, \emph{New upper bounds on sphere packings. II}, Geom. Topol. \textbf{6} (2002) 329--353.

\bibitem{CKMRV} 
H.~Cohn, A.~Kumar, S.\,D.~Miller, D.~Radchenko, and M.~Viazovska, \emph{The sphere packing problem in dimension 24}, Ann. of Math. \textbf{185} (2017) 1017--1033.

\bibitem{cohn-zhao} 
H.~Cohn and Y.~Zhao, \emph{Sphere packing via spherical codes}, Duke Math. J. {\bf 163} (2014), 1965--2002.

\bibitem{delsarte1} 
P.~Delsarte, \emph{Bounds for unrestricted codes by linear programming}, Philips Res. Rep. \textbf{2} (1972), 272--289.

\bibitem{delsarte2} 
P.~Delsarte, J.-M.~Goethals, and J.\,J.~Seidel, \emph{Spherical codes and designs, Geom. Dedicata} (3) \textbf{6} (1977), 363–388.

\bibitem{Duffin}
R.\,J.~Duffin, \emph{Infinite programs}, In: Linear Inequalities and Related Systems (AM-38), ed. by H.\,W.~Kuhn and A.\,W.~Tucker, Princeton University Press, 1956, pp. 157--70.

\bibitem{edwards}
R.\,E.~Edwards, \emph{Functional Analysis: Theory and Applications}, Reinhart and Winston, New York, 1965.

\bibitem{marcell-zsuzsa}
M.~Ga\'al and Zs.~Nagy-Csiha,
\emph{On the existence of an extremal function in the Delsarte extremal problem},
Mediterr. J. Math. \textbf{17} (2020), no.~190.

\bibitem{gabardo}
J.-P.~Gabardo,
\emph{The Turán Problem and Its Dual for Positive Definite Functions Supported on a Ball in $\mathbb{R}^d$},
J. Fourier Anal. Appl. \textbf{30} (2024), 11.

\bibitem{Gaspari-Cohn}
G.~Gaspari and S.\,E.~Cohn, \emph{Construction of correlation functions in two and three dimensions}, Q. J. R. Meteorol. Soc. \textbf{125} (1999), 723--757.

\bibitem{gneiting} 
T.~Gneiting, \emph{Supplement to Strictly and non-strictly positive definite functions on spheres} (2013), doi: DOI:10.3150/12-BEJSP06SUPP.

\bibitem{golstein}
E.\,G.~Gol'stein, \emph{Teoriya dvoistvennosti v matematicheskom programmirovanii i eyo prilozheniya}
(Theory of Duality in Mathematical Programming and Its Applications), Moscow: Nauka, 1971.

\bibitem{GorbachevTula} 
D.\,V.~Gorbachev, \emph{Extremal problems for entire functions of exponential spherical type, connected with the Levenshtein bound on the sphere packing density in $\mathbb{R}^n$} (Russian), Izvestiya of the Tula State University, Ser. Mathematics, Mechanics, Informatics \textbf{6} (2000) 71--78.

\bibitem{gorbachev}
D.\,V.~Gorbachev,
\emph{An extremal problem for periodic functions with supports in the ball},
Math. Notes \textbf{3} (2001), 313--319.

\bibitem{gorbachevduality}
D.\,V.~Gorbachev, \emph{Method for solving the Delsarte problem for weighted designs on compact homogeneous spaces}, Chebyshevskii Sbornik \textbf{25} (2024), no.~4, 53--73.

\bibitem{Hamill}
T.\,M.~Hamill, J.\,S.~Whitaker, and C.~Snyder, \emph{Distance-dependent filtering of background error covariance estimates in an ensemble Kalman filter}, Mon. Wea. Rev. \textbf{129} (2001), 2776--2790.

\bibitem{husain} 
T.~Husain, \emph{Introduction to Topological Groups}, W.\,B.~Saunders Company, Philadelphia and London, 1966.

\bibitem{I-T}
A.\,D.~Ioffe and V.\,M.~Tikhomirov, \emph{Duality of convex functions and extremum problems},
Russian Math. Surveys \textbf{23} (6) (1968), 53--124.

\bibitem{KabLev}
G.\,A.~Kabatyanskii and V.\,I.~Levenshtein, 
\emph{On bounds for packing on the sphere and in space} (Russian), Probl. Inform. \textbf{14} (1978), no.~1, 3--25.

\bibitem{KLM} 
M.\,N.~Kolountzakis, N.~Lev, and M.~Matolcsi, \emph{The Tur\'an and Delsarte problems and their duals}, ArXiv preprint,  arXiv:2510.10172.

\bibitem{kolmatwei}
M.~N.~Kolountzakis, M.~Matolcsi, and M.~Weiner,
\emph{An application of positive definite functions to the problems of MUBs},
Proc.\ Amer.\ Math.\ Soc. \textbf{146} (2018), no.~3, 1143--1150.

\bibitem{kolrev}
M.\,N.~Kolountzakis and Sz.\,Gy.~R\'ev\'esz,
\emph{On a problem of Tur\'an about positive definite functions},
Proc. Amer. Math. Soc. \textbf{131} (2003), 3423--3430.

\bibitem{kuklin}
N.\,A.~Kuklin,
\emph{Delsarte method in the problem on kissing numbers in high-dimensional spaces},
Proc. Steklov Inst. Math. Suppl. \textbf{284} (2014), 108--123.

\bibitem{Levenshtein}
V.\,I.~Levenshtein, \emph{Bounds for packings in $n$-dimensional Euclidean space},
Dokl. Akad. Nauk SSSR \textbf{245} (1979) 1299--1303.

\bibitem{MatolcsiRuzsa}
M.~Matolcsi and I.\,Z.~Ruzsa, \emph{Difference Sets and Positive Exponential Sums I. General Properties},
J. Fourier Anal. Appl. \textbf{20} (2014), 17--41.

\bibitem{MW}
M. Matolcsi and M. Weiner,
\emph{An Improvement on the Delsarte-Type LP-Bound with Application to MUBs}, Open Syst. Inf. Dyn. \textbf{22} (2015), 1550001.

\bibitem{ramabulanaexistence}
M.\,D.~Ramabulana,
\emph{On the existence of an extremal function for the Delsarte extremal problem},
Anal. Math. \textbf{51} (2025), 279--291.

\bibitem{ramabulana}
M.\,D.~Ramabulana, \emph{Delsarte-type problems on homogeneous spaces and convolutions roots}, J. Fourier Anal. Appl. \textbf{32} (2026), 44.

\bibitem{revesz}
Sz.\,Gy.~Révész, \emph{Some trigonometric extremal problems and duality},
J. Aust. Math. Soc. Ser. A \textbf{50} (1991), 384--390.

\bibitem{Revesz-Beurling} 
Sz.\,Gy.~R\'ev\'esz, \emph{On Beurling's Prime Number Theorem}, Period. Math. Hung. {\bf 28} (1994) no. 3, 195--210.

\bibitem{R-Landau} 
Sz.\,Gy.~Révész, \emph{On some extremal problems of Landau}, Serdica Math. J. {\bf 33} (1) (2007) 125–162.

\bibitem{reveszLCA}
Sz.\,Gy.~R\'ev\'esz,
\emph{Tur{\'a}n’s extremal problem on locally compact Abelian groups},
Anal. Math. \textbf{37} (2009), 15--50.

\bibitem{Richmond}
T.~Richmond, \emph{General Topology}, De Gruyter, Berlin/Boston, 2020.

\bibitem{Ruzsa1981}
I.\,Z.~Ruzsa, \emph{Connections between the uniform distribution of a sequence and its differences},
in: Topics in Classical Number Theory,
Colloq. Math. Soc. János Bolyai, vol.~34,
North-Holland, Amsterdam--New York--Budapest, 1981, pp.~1419--1443.

\bibitem{Ruzsa1982} 
I.\,Z.~Ruzsa, 
\emph{Uniform distribution, positive trigonometric polynomials and difference sets.} Seminar on Number Theory, 1981/1982, Exp. No. 18, 18 pp
Université de Bordeaux I, U.E.R. de Mathématiques et d'Informatique, Laboratoire de Théorie des Nombres, Talence, 1982

\bibitem{sasvari}
Z.~Sasv\'{a}ri, \emph{Positive Definite and Definitizable Functions}, Vol. 2, Akademie Verlag, Berlin, 1994.

\bibitem{siegel} 
C.~L.~Siegel, \emph{{\"U}ber Gitterpunkte in konvexen K{\"o}rpern und ein damit zusammenh{\"a}ngendes Extremalproblem},
Acta Math. \textbf{65} (1935), 307--323.

\bibitem{viazovska} 
M.\,S.~Viazovska, \textit{The sphere packing problem in dimension 8}, Ann. of Math. (3) \textbf{185} (2017), 991--1015.

\bibitem{virosztek} 
D.~Virosztek, \emph{Applications of an intersection formula to dual cones}, Bull. Aust. Math. Soc. \textbf{97} (2018), 94--101.

\bibitem{wackenhuth}  
M.~Wackenhuth,
\emph{Linear programming bounds on homogeneous spaces, I: optimal packing density}, ArXiv preprint, arXiv:2505.23572.

\bibitem{maximilian}
M.~Wackenhuth,
\emph{Bounds on hyperbolic sphere packings: on a conjecture by Cohn and Zhao}, C. R., Math., Acad. Sci. Paris  
\textbf{364} (2026), 237--242.

\bibitem{wolf}
J.\,A.~Wolf, \emph{Harmonic Analysis on Commutative Spaces}, American Mathematical Society, Providence, 2007.

\bibitem{Yudin}
V.\,A.~Yudin, \emph{Packings of balls in Euclidean space, and extremal problems for trigonometric polynomials} (Russian), 
Diskret. Mat. \textbf{1} (1989) 155--158; translation in Discrete Math. Appl. \textbf{1} (1991) 69--72.


\end{thebibliography}

\end{document}